\numberwithin{equation}{section}
\theoremstyle{plain}
  \newtheorem{thm}{Theorem}[section]
\theoremstyle{remark}
\newtheorem*{proof1}{Proof of Theorem \ref{thm}}
\newtheorem*{proof2}{Proof of Theorem \ref{thm}}
\newtheorem*{prooflbwb}{Proof}
\newtheorem*{proofwb}{Proof}
\newtheorem*{prooflbcwb}{Proof}
\newtheorem*{prooflhwb}{Proof}
\newtheorem*{proofsmpwb}{Proof}
\theoremstyle{defn}
\newtheorem{prop}[thm]{Proposition}
\newtheorem{lem}[thm]{Lemma}
\newtheorem{cor}[thm]{Corollary}
\newtheorem{oss}{Remark}
\newtheorem*{step1}{Step $1$}
\newtheorem*{step2}{Step $2$}
\newtheorem*{step3}{Step $3$}
\newtheorem*{step4}{Step $4$}
\newtheorem*{step5}{Step $5$}
\newtheorem{defn}{Definition}[section]
\def\d{\delta}
\def\to{\rightarrow}
\def\eps{\varepsilon}
\newcommand{\R}{{\mathbb R}}
\title[On Neumann type problems for nonlocal HJ equations]{On Neumann problems for nonlocal Hamilton-Jacobi equations with dominating gradient terms}
\thanks{
	This work was partially supported by the ERC advanced grant $668998$ (OCLOC) under the EU's H$2020$ research programme. }
\author{Daria Ghilli   }
 \address{Daria Ghilli, University of Graz,Institute of Mathematics and Scientific Computing,  Universit{\"a}tsplatz 3, Austria}
\email{daria.ghilli@uni-graz.at}
\begin{document}
\maketitle

 
\begin{abstract}
We are concerned with the well-posedness of  Neumann boundary value
problems for nonlocal Hamilton-Jacobi  equations related to jump processes in general smooth domains. We consider a nonlocal diffusive term of censored type of order strictly less than $1$ and Hamiltonians both in coercive form and in noncoercive  Bellman form, whose growth in the gradient make them the leading term in the  equation. We prove a comparison principle for bounded sub-and supersolutions in the context of viscosity solutions with generalized boundary conditions, and consequently by Perron's method we get the existence and uniqueness
of continuous solutions.
We give some applications in the evolutive setting, proving  the large time behaviour of the associated evolutive problem under suitable
assumptions on the data.
\end{abstract}

\section{Introduction}\label{sec:bounded}
The aim of this work is the analysis of the well-posedness of  Neumann boundary value problems for partial-integro differential equations (PIDEs in short)  of Hamilton-Jacobi type, where the nonlocal terms are singular integrals related to the infinitesimal generator of discontinuous jump processes.  To be more specific, we consider the following 
\begin{equation}\label{solutions0}
\left\{
 \begin{array}{ll}
 u(x)-\mathcal{I}[u](x)+H(x,Du)=0 \, \, &\mbox{in} \, \, \Omega, \\
                 \frac{\partial u}{\partial n}= 0 \, \, &\mbox{on} \, \, \partial \Omega,
                 \end{array}
\right.\,
\end{equation}
where  $H\, : \, \bar{\Omega} \times \mathbb{R}^N \mapsto \mathbb{R}$ is a continuous function, $\Omega\subset \mathbb{R}^N$ is an open (smooth enough) domain,  $n(x)$ is the exterior unit normal vector to $\partial \Omega$ at $x \in \partial \Omega$
and $\mathcal{I}[u]$ is an \textit{integro-differential operator of censored type and of order strictly less than $1$} (see \eqref{idol} for the definition).

In the probabilistic approach to PDEs, Neumann boundary conditions are associated to stochastic processes being reflected on the boundary. The underlying idea  is to force the stochastic process to remain inside the domain of the equation. 
Classically, this is obtained essentially by a reflection on the boundary (see  the method developed by Lions and  Sznitman \cite{LS} in the continuous setting).
A key result  in the classical  setting is that, for a
PDE with Neumann boundary conditions, there is a unique underlying
reflection process  and any consistent approximation will converge to it
(see \cite{LS} and Barles, Lions \cite{BL}). 

When dealing with discontinuous jump processes, the underlying idea is the same but the situation is different. This is essentially due to the fact that the jump process  may exit the domain without having first hit the boundary. The consequence is that Neumann boundary conditions can be obtained in many ways, depending on the kind of reflection we impose on the outside jumps. 
Moreover, the choice of 
a reflection on the boundary changes the equation inside the domain.

The starting point of our work is the paper \cite{BCGJ} where Barles, Chasseigne, Georgeline and Jakobsen  studied problems as in \eqref{solutions0} in the case of linear equations (that is, without the Hamiltonian $H$) and when the domain $\Omega$ is the half space. In  \cite{BCGJ} different models of reflection are presented, among which two types of reflections are particularly relevant for possible extensions in a more general setting. The first is the  \textit{normal projection}, close to the approach of Lions-Sznitman in \cite{LS}, where  outside jumps are immediately projected to the boundary
by killing their normal component. This  model has been thoroughly investigated
in the paper \cite{BGJ} for fully non-linear equations set in general domains. 

The second, the  \textit{censored} model, is the one we  consider in our paper. In this case, any outside jump of the underlying process is
cancelled (censored) and the process is restarted (resurrected) at the origin of that
jump. 

In particular, in the present work, we consider the boundary value problem \eqref{solutions0} where  $\mathcal{I}[u]$ is an \textit{integro-differential operator of censored type and of order strictly less  than  $1$} defined as  
\begin{equation}\label{idol}
\mathcal{I}[u](x)= \lim_{\d \to 0^+}\int_{\begin{matrix} \mbox{\footnotesize{$|z|>\d$}}, \\ \mbox{\footnotesize{$x+j(x,z)\in \bar{\Omega}$}}\end{matrix}} \, [u(x+j(x,z))-u(x)]d\mu_x(z),
\end{equation}
where $\mu_x$ is a singular  non-negative Radon measure representing the intensity of the jumps from $x$ to $x+z$  and satisfying the following integrability condition 
$$
\int |z|\wedge 1 d\mu_x(z) < + \infty,
$$
and $j(x,z)$ is a jump function (see assumptions (M), (J0), (J1) in the following section for details). A meaningful example is
\begin{equation}\label{muintro}
d\mu_x(z)\sim \frac{g(x,z)}{|z|^{N+\sigma}}dz, \quad \sigma \in (0,1),\quad j(x,z)=f(x) z,
\end{equation} 
where $g, f$ are bounded and Lipschitz functions. 
Note that $\mathcal{I}$ has to be interpreted as a principal value (P.V.) integral. We remark that the domain of integration is restricted to the $z$ such that $x+j(x,z) \in \bar{\Omega}$, avoiding thus any outside jump. Note also that, as a consequence of the fact that censored type processes are not allowed to jump outside $\bar{\Omega}$, we don't need any conditions on $\Omega^c$ in the boundary value problem  \eqref{solutions0}. 

We follow the  PIDE analytical approach developed in \cite{BCGJ},  in the sense that we  directly work with the infinitesimal generator and not yet with the  processes themselves.  For more details and probabilistic references on censored  processes, we refer to e.g. \cite{BBC}, \cite{FR}, \cite{J}, \cite{GM} and to the introduction of \cite{BCGJ}. We just mention that the underlying processes in this paper are related to the censored stable processes of Bogdan \cite{BBC} and the reflected $\sigma$-stable process of Guan and Ma \cite{GM}.  

We stress  that the boundary value problem \eqref{solutions0} is interpreted  in the sense of viscosity solutions, meaning that the Neumann boundary condition could also be not attained (and in this case the equation holds up to the boundary).

In the case of linear PIDES, as considered in \cite{BCGJ}, the kind of singularity of  $\mu$ influences the nature of the boundary value problem \eqref{solutions0}, in the sense that   the Neumann boundary condition is attained only
if the measure is singular enough. In particular in \cite{BCGJ} it is shown that, when the singularity is of order strictly less than $1$ as in \eqref{muintro},
 the equation holds up to the boundary and the process never reaches the boundary.
On the other hand, when the singularity of the measure is strong,  i.e. when $\mu$  is  of the type \eqref{muintro} with $\sigma \in [1,2)$, the situation is far more complicated, mainly due to the \textquotedblleft ugly\textquotedblright \, dependence in $x$ of the operator in \eqref{idol}  and to the interplay between the singularity of the measure and the geometry of the boundary. In \cite{BCGJ} this difficulty is tackled by considering  solutions which are in some sense H\"{o}lder continuous up to the boundary and  the comparison principle    is established only in this class. Though the result could not be optimal, it is  consistent with  the \textquotedblleft natural\textquotedblright\,  Neumann boundary condition   for the reflected $\sigma$-stable process (proved by Guann and Ma \cite{GM} through the variational
formulation and Green type formulas) which in the case of the half space reads
\begin{equation}\label{naturalnbc}
\lim_{t\to 0}t^{2-\sigma}\frac{\partial u}{\partial x_N}(x+te_N)=0.
\end{equation}
This allows  the normal derivative to growth less than $|x_N|^{\sigma-2}$ and then suggests that it is appropriate to look for solutions which are $\beta$-H\"{o}lder continuous, with $\beta>\sigma-1$, as assumed in \cite{BCGJ}. 
We remark  that the previous argument  suggests also  that, on the contrary, in the case $\sigma <1$ there is no need to assume any further regularity.

The situation is different when dealing with nonlinear equations as \eqref{solutions0}, which we consider in this paper. Indeed, the presence of the Hamiltonian term $H$ in \eqref{solutions0} entails further difficulties even in the case of measures of order strictly less than $1$ (e.g. as in \eqref{muintro}). This is due to the fact that  the nonlinear term $H$ could force the process to hit the boundary and, consequently, the  Neumann boundary condition to be attained. 

In order to deal with this difficulty, we consider  a class of Hamiltonians with a gradient growth stronger than the diffusive term in the nonlocal operator \eqref{idol}. The first example are Hamiltonians $H$ with   \textit{superfractional coercive} growth in the gradient variable, namely 
\begin{equation}\label{coer}
H(x,p)=a(x)|p|^m -f(x), 
\end{equation}
where $m>\sigma$, $\sigma \in (0,1)$,
 $a, f \, : \bar{\Omega} \mapsto \mathbb{R}$  are bounded and continuous functions and  $a(x) \geq a_0>0$ for some fixed constant $a_0$. We remark that the positivity of $a$ and the condition $m>\sigma$ make
the first-order term the leading term in the equation. We also observe that we
have no other additional restriction to $m$ (in particular, we can deal with Hamiltonians as in \eqref{coer} with
$m < 1$), allowing the study of Hamiltonians which are concave in
$Du$.

The second main example are Hamiltonians $H$ of \textit{Bellman type}, which arises in the study of Hamilton-Jacobi equations
associated to optimal exit time problems, such as
\begin{equation}\label{control}
H(x, p) = \sup_{\alpha \in \mathcal{A}} \{-b(x,\alpha) \cdot p - l(x,\alpha)\},
\end{equation}
where $\mathcal{A}$ is a compact metric space (the control space) and $b,l$ are continuous and bounded  functions (we refer the reader to  \cite{BCD} and \cite{FS} for some connections between this type of equations and control problems). 
Note that the diffusive term of $\mathcal{I}$  defined in \eqref{idol}
is of weaker order than the first-order term when we assume $\sigma < 1$. We also observe that, as in \cite{GT1} and \cite{T1}, the well-posedness of \eqref{solutions0} with  Hamiltonians as in \eqref{control} is
based on a careful study of the effects of the drift $b$ at each point of $\partial \Omega \times (0,+\infty)$. Note that the drift $b$ is not allowed to be parallel to the boundary for all controls, differently from the local case. This assumption on $b$  makes the Hamiltonian the leading order term in the equation and thus allows us to control the growth of  the  nonlocal term. 

The main result of our paper is the comparison principle between  bounded sub and super-viscosity solutions 
to \eqref{solutions0}, see Theorem \ref{thm}. We remark that the proof of this result is not standard even in the case $\sigma <1$ in the half space. The difficulties are mainly due to the fact that operators as in \eqref{idol} behave badly in $x$. The main idea which is behind the proof is to localize the argument on points which have the same distance from the boundary and this is carried out through the use of a non-standard non regular test function. The main assumption which allows us to localize on equidistant points is the superfractional growth of the Hamiltonian term, see in particular the proof of Proposition \ref{lembell} and Proposition \ref{lembellc} (more precisely,  Lemma \ref{lemdxdy} and Lemma \ref{lemdxdycoer}) for  Bellman and coercive Hamiltonians respectively. 

After the localization procedure, the rest of the proof in the case of the half space is simple, whereas in the case of general domains, further technical difficulties arise form the way the $x$-depending set of integration of $\mathcal{I}$ interferes with the geometry of the boundary. To face  these extra technical difficulties, we  rectify the boundary relying on the smoothness of $\Omega$. This is done in Lemma \ref{claimlip}  which is a key result used in the proof of Theorem \ref{thm}, which we prove before Theorem \ref{thm} in Section \ref{sec:claimlip}.

The first main application of our result is the proof of existence and uniqueness for \eqref{solutions0},  by standard Perron's method (Corollary \ref{corthm}).

Then, in Section \ref{appl}, we present some applications of our results to the evolutive setting. In particular, we prove the well-posedness of the Cauchy problem associated to \eqref{solutions0} and we study two different kind of asymptotic behaviour  under suitable assumptions on the data. We refer to Section \ref{appl} for precise assumptions, statement of the results and proofs.  

Finally, we remark that in \cite{GT1}  Dirichlet boundary value problems associated to nonlocal Hamilton-Jacobi parabolic equations are studied. The main interest of \cite{GT1} is to prove the well-posedness in the context of loss of the boundary condition, that is, in the case of Hamiltonians with  dominating gradient terms, as the ones considered in the present paper.  However, the  nonlocal operators treated in \cite{GT1}  are very different from the  censored operators  \eqref{idol} that we study here, since they are defined in all the space (and can therefore be seen as generalizations of the fractional laplacian). Moreover, differently from \eqref{solutions0},  in \cite{GT1} also boundary conditions  outside the domain are requested. This is strongly connected to the fact that here we consider Neumann boundary value problems with censored operators, that is, roughly speaking, we do not allow the process to jump outside the domain and therefore, no conditions outside is needed.
\subsection{Organization of the paper}
In Section \ref{sec:ass} we state the assumptions on the nonlocal operator and the Hamiltonian and we give the definition of solution to problem \eqref{solutions0}. In Section \ref{sec:proof} we state the main results, that is the uniqueness and existence for problem \eqref{solutions0} for a Hamiltonian either coercive or of Bellman type (Theorem \ref{thm} and Corollary \ref{corthm}). In Section \ref{sec:claimlip} we prove Lemma \ref{claimlip} and  in Section \ref{compstaz} we prove Theorem \ref{thm}. In Section \ref{appl} we treat the associated evolutive  problem, studying uniqueness, existence and asymptotic behaviour of the associated evolutive problem for large time. Finally, in the Appendix we prove some  lemmas used in the proof of Theorem \ref{thm}.

\section{Assumptions and  definition of solutions}\label{sec:ass}
We consider $\Omega \subset \mathbb{R}^N$  such that
\begin{equation}\tag{O}
\Omega \mbox{ is of class } W^{2,\infty}.
\end{equation}
This means that for any  $\hat{s} \in \partial \Omega$ there exists  $r=r(\hat{s})$ and a $W^{2,\infty}$-diffeomorphism $\psi: B_r(\hat{s}) \mapsto \mathbb{R}^N$
satisfying
$
 \psi_N(s)=d(s) \, \mbox{ for any } s \in B_r(\hat{s}),
$
where $d$ is the signed distance from the boundary of $\Omega$.
\begin{oss}\label{distance}
\rm{
By assumption (O), there exists a neighbourhood of the boundary of $\Omega$ where the distance from the boundary $d$ is smooth. Unless otherwise specified, throughout the paper we denote by  $d$ a function which coincides with the signed distance from the boundary of $\partial \Omega$ in this neighbourhood and is bounded in all the domain. We recall that by $n(x)$ we denote the exterior unit normal vector to $\partial \Omega$ and  we write $n(x)=-Dd(x)$ in the neighbourhood of the boundary where $d$ is smooth.
}
\end{oss}
We consider non-negative Radon measures with density $\frac{d\mu_x}{dz}$ satisfying
\begin{enumerate}
\item[(M)]  there exists $C_\mu, D_\mu>0, \sigma \in (0,1)$ such that for any $x,y \in \bar{\Omega},  z \in \mathbb{R}^N$ 
$$
\frac{d\mu_x}{dz}\leq C_\mu|z|^{-(N+\sigma)}, \quad |\frac{d\mu_x}{dz}-\frac{d\mu_y}{dz}|\leq D_\mu|x-y||z|^{-(N+\sigma)}.
$$
\end{enumerate}

For example, (M) is satisfied by
\begin{equation}\label{example}
d\mu_x=g(x,z)|z|^{-(N+\sigma)}dz \quad x \in \bar{\Omega}, z \in \mathbb{R}^N,
\end{equation}
where $\sigma \in (0,1)$, $g\, : \, \mathbb{R}^N \times \mathbb{R}^N \mapsto \mathbb{R}$ is a non-negative bounded function such that 
$g(\cdot,z)$ is Lipschitz  uniformly with respect to $z$.
 
Concerning  the jump function $j$ we assume 
\begin{enumerate}
\item[(J0)] for any  $x \in \bar{\Omega}$,
$
j(x,\cdot)\in C^1(\R^N),
$
$j(x,\cdot)$ is invertible,
$
j^{-1}(x,\cdot) \in C^{1}(\mathbb{R}^N)$ and there exists a constant $A_j$ such that
$$
|Dj^{-1}(x,\cdot)|\leq A_j \quad \forall x \in \bar{\Omega};
$$
\item[(J1)]  there exist $\tilde{C}_j,C_j, D_j>0$ such that for any $x, y \in \bar{\Omega},  z \in \mathbb{R}^N $, it holds
$$
\tilde{C}_j|z|\leq |j(x,z)|\leq C_j|z|, \quad |j(x,z)-j(y,z)|\leq D_j|z||x-y|.
$$\label{J1}
\end{enumerate}
For example (J0), (J1) are satisfied for
$$
j(x,z)=f(x)z \quad x \in \bar{\Omega}, z \in \mathbb{R}^N,
$$
where $f \, : \mathbb{R}^N \mapsto \mathbb{R}$ is  Lipschitz and bounded.

\subsection{Hamiltonian of Bellman type}
Let $\mathcal{A}$ be a compact metric space, $b\, : \, \bar{\Omega} \times \mathcal{A} \to \mathbb{R}^N$ and $f\, : \, \bar{\Omega} \times \mathcal{A}\to \mathbb{R}$  be continuous and bounded functions. We say that $H$ is of \textit{Bellman type} if for $x \in \bar{\Omega}, p \in \mathbb{R}^N, H(x,p)$ can be written as
\begin{equation}\label{bellman0}
H(x,p)=\sup_{\alpha \in \mathcal{A}} \{-b(x,\alpha)\cdot p -l(x,\alpha)\},
\end{equation}
and  satisfies the assumptions below.
We assume also:
\begin{enumerate}
\item[(C)] \label{lcont} \textit{Uniform  continuity  of the cost $l$}:\\
There exists a modulus of continuity  $\omega_l$ such that 
$$
|l(x,\alpha)-l(y,\alpha)|\leq \omega_l(|x-y|) \quad \forall \alpha \in \mathcal{A}, \forall x, y \in \bar{\Omega};
$$
\item[(L)] \label{blip} \textit{Uniform Lipschitz continuity of the drift $b$}:\\
$$
(\exists C>0)\, (\forall \alpha \in \mathcal{A}) \, (\forall x,y \in \bar{\Omega}) \, \, :\, \, 
|b(x, \alpha)-b(y,\alpha)| \leq C|x-y|. 
$$
\end{enumerate}
We introduce the following notation
\begin{equation}\label{gammain}
\Gamma_{\mbox{\footnotesize{in}}}:=\{x \in \partial \Omega \, :  \, b(x,\alpha) \cdot n(x) < 0 \, \, \forall \alpha \in \mathcal{A}\},
\end{equation}
\begin{equation}\label{gammaout}
\Gamma_{\mbox{\footnotesize{out}}}:=\{x \in \partial \Omega\,  | \, b(x,\alpha) \cdot n(x) >0 \quad \forall \alpha \in \mathcal{A}\},
\end{equation}
\begin{equation}\label{gamma}
\Gamma:=\{x \in \partial \Omega\, |\,  \exists \alpha_1, \alpha_2 \in \mathcal{A} \mbox{ s. t. } 
 b(x,\alpha_1)\cdot n(x)<0, 
b(x,\alpha_2)\cdot n(x) >0 \}.
\end{equation}
Roughly speaking, $\Gamma_{\mbox{\footnotesize{in}}}$ and $\Gamma_{\mbox{\footnotesize{out}}}$ can be respectively understood  as the set of points where the drift term pushes inside  and outside $\Omega$ the trajectories. 
 
In order to avoid two completely different drifts behaviour for arbitrarily closed points, we assume that each of these subsets is uniformly away from the others, as encoded in the following assumption (B). For example, if $\partial \Omega$ is connected, then it consists in one piece belonging to one
of  $\Gamma_{\mbox{\footnotesize{in}}}$, $\Gamma_{\mbox{\footnotesize{out}}}$ and $\Gamma$; otherwise, we are able to deal with boundary with several
components of different types, precisely each one belonging to one between $\Gamma_{\mbox{\footnotesize{in}}}$, $\Gamma_{\mbox{\footnotesize{out}}}, \Gamma$.

The assumptions we do on these subsets are the following
\begin{equation}\tag{B}
\Gamma_{\mbox{\footnotesize{in}}} \cup \Gamma_{\mbox{\footnotesize{out}}}\cup \Gamma=\partial \Omega, \quad \Gamma_{\mbox{\footnotesize{in}}}, \Gamma_{\mbox{\footnotesize{out}}}, \Gamma \mbox{ are  unions of connected components of } \partial \Omega.
\end{equation}

\begin{oss}\rm{
Note that the strict sign in the definition of $\Gamma_{\mbox{\footnotesize{in}}}, \Gamma_{\mbox{\footnotesize{out}}} $ and $\Gamma$ is fundamental, since it makes the Hamiltonian the leading order term in the equation, allowing us to control the growth of  the  nonlocal term, which  is of order strictly less than $1$.}
\end{oss}
\begin{oss}\rm{
In order to treat the points of $\Gamma_{\footnotesize{\mbox{in}}}$, we use the existence of a blow-up supersolution exploding on the boundary. We follow the same approach of \cite{BCGJ}, where the existence of a blow-up supersolution is proved  for censored type operators (of order strictly less than $1$) when  the measure of integration satisfies specific assumptions  (in particular does not depend on $x$ and there exists at least one point where it is strictly positive). In this particular case it is shown in \cite{BCGJ} that the integral term computed on the blow-up supersolution does not explode at the boundary. This is not true anymore when considering more general measures as we consider in  (M).
In order to solve this difficulty, we assume the strict sign in the behaviour of the drift term on $\Gamma_{\footnotesize{\mbox{in}}}$, which allows us to control the growth on the boundary of the integral term computed on this blow-up supersolution.  We refer to the proof of Proposition \ref{lembell} and in particular to Lemma \ref{lemblowup} for further details.
}
\end{oss}

\subsection{Coercive Hamiltonian and Examples}\label{coercass}

We consider \textit{superfractional} coercive Hamiltonians:

\begin{enumerate}
\item[(H1)] Let $\sigma$ be as in (M). There exists  $m > \sigma, c_0 >0,D>0$ such that for all $x \in \bar{\Omega},p \in \mathbb{R}^N$
$$
H(x,p) \geq c_0 |p|^m -D.
$$
\end{enumerate}

We distinguish the case of sub or superlinear coercivity:

\textit{Sublinear coercivity}:
We say that $H$ is sublinearly coercive if it satisfies (H1) for $m \leq  1$ and the following continuity condition holds:

\begin{enumerate}
\item[(Ha)] \label{assh2}There exists a constant $C>0$ and modulus of continuity $\omega_1$ such that, for  all $x,y,q,p \in \mathbb{R}^N$, we have
$$
H(y,p)-H(x,q) \leq \omega_1(|x-y|)(1+|p|)+ C(|p-q|).
$$
\end{enumerate}


\textit{Superlinear coercivity}: We say that $H$ is superlinearly coercive if:

\begin{enumerate}
\item[(Hb)]\label{assh}
There exists $m> 1$,$A, \bar{C}>0$ such that for all $\mu \in (0,1), x,y,p \in \mathbb{R}^N$, we have
$$
H(x,p)-\mu H(x,\mu^{-1}p) \leq (1-\mu)\left(\bar{C}(1-m)|p|^m +A\right);
$$
\item[(Hc)]\label{omega} If $m$ is as in assumption (Hb), there exist  $C>0$ and a modulus of continuity $\omega_1$ such that, for  all $x,y, q,p \in \mathbb{R}^N$
$$
H(y,p)-H(x,q) \leq \omega_1(|x-y|)(1+|p|^m\vee|q|^m)+ C|p-q|(|p|^{m-1}\vee |q|^{m-1}).
$$
\end{enumerate}

\begin{oss}
Note that condition $(Hb)$ implies $(H1)$ for $m>1$.
\end{oss}

As it is classical in viscosity solution's theory, the comparison principle allows the application of Perron's method to conclude the existence of solutions. To this end, we introduce the following assumption, which will allows us to build sub and supersolutions:
\begin{enumerate}
\item[(E)]\label{HC0}
 There exists $H_R>0$ such that for any $p  \in \mathbb{R}^N, |p|\leq R$
$$||H(\cdot,p)||_\infty \leq H_R.
$$
\end{enumerate}

\vspace{0.1cm}
As a model example for sublinearly coercive Hamiltonians, we consider 
$$
H(x, p) = a_1(x)|p|^m + a_2(x)|p|^l -f(x),
$$
with $m\leq 1, a_1\geq a_0>0$ for all $x \in \Omega, l<m$ and $a_1, a_2, f\, : \, \bar{\Omega} \mapsto \mathbb{R}$ are continuous and bounded functions and $a_1,a_2$ are also Lipschitz continuous. 

As a model example for superlinearly coercive Hamiltonian, we consider
$$
H(x, p) =a_1(x)|p|^m + a_2(x)|p|^l + b(x)\cdot p - f(x),
$$
with $m>1, l \leq m,  b$ bounded and continuous and $a_1,a_2, f$ as before. 

These Hamiltonians are coercive in $p$ and in the case $m>1$ we can
include transport terms with a Lipschitz continuous vector field $b\,: \, \bar{\Omega} \mapsto \mathbb{R}^N$.
The above assumptions are easily checkable in both cases.

\subsection{Notion of viscosity solutions}

We recall now the definition of solution to problem \eqref{solutions0}.  We use the following notation:
\begin{equation}\label{idoxi2}
\mathcal{I}[\phi]=\mathcal{I}_\xi[\phi]+\mathcal{I}^\xi[\phi],
\end{equation}
where
\begin{equation}\label{idoxi}
\mathcal{I}^\xi[\phi]=\int_{\footnotesize{\begin{matrix}|z|\geq \xi,\\ x+j(x,z) \in \bar{\Omega} \end{matrix}}} \phi(x+j(x,z))-\phi(x)d\mu_x(z).
\end{equation}
The $\mathcal{I}^\xi$-term is well-defined for any bounded function $\phi$. The $\mathcal{I}_\xi$-term is well-defined for $\phi \in C^1$ thanks to (M0).

We also denote
$$
F(x,u, Du,\mathcal{I}[u])=u(x)-\mathcal{I}[u](x)+H(x,Du).
$$
Following the approach of \cite{BCGJ}, we give the definition of viscosity solution to $\eqref{solutions0}.$ Let $C_j$ be defined as in (J1). 

\begin{defn}\label{defsol}
\begin{enumerate}
\item[(i)] A bounded usc function $u$ is a viscosity \textit{subsolution} to \eqref{solutions0} if, for any test-function $\phi \in C^1(\mathbb{R}^N) $ and maximum point $x$ of $u-\phi$ in $\bar{B}_{C_j\xi}(x)\cap \bar{\Omega}$
\begin{eqnarray*}
F(x, u(x), D\phi(x), \mathcal{I}_\xi[\phi]+\mathcal{I}^\xi[u]) \leq 0 \quad &x& \in \Omega \\
\min\{F(x,u(x), D\phi(x), \mathcal{I}_\xi[\phi]+\mathcal{I}^\xi[u]),\frac{\partial \phi}{\partial n}\}\leq 0 \quad &x& \in \partial\Omega.
\end{eqnarray*}
\item[(ii)]
A bounded lsc function $v$ is a viscosity \textit{supersolution} to \eqref{solutions0} if, for any test-function $\phi \in C^1(\mathbb{R}^N) $ and minimum point $x$ of $v-\phi$ in $\bar{B}_{C_j\xi}(x)\cap \bar{\Omega},$
\begin{eqnarray*}
F(x, v(x), D\phi(x), \mathcal{I}_\xi[\phi]+\mathcal{I}^\xi[v]) \geq 0 \quad &x& \in \Omega \\
\max\{F(x,v(x), D\phi(x), \mathcal{I}_\xi[\phi]+\mathcal{I}^\xi[v], \frac{\partial \phi}{\partial n}\}\geq 0 \quad &x& \in  \partial\Omega.
\end{eqnarray*}
\item[(iii)]
A viscosity \textit{solution} is both a sub- and a supersolution.
\end{enumerate}
\end{defn}

\section{Main results}\label{sec:proof}

The main result of this part is the following comparison principle for the problem \eqref{solutions0}. 

\begin{thm}\label{thm}\rm{[Comparison]}
\textit{
Let $\Omega$ be an open subset of $\mathbb{R}^N$ satisfying $(O)$. Assume $(M), (J0), (J1)$. Let
$H$ be  a Hamiltonian of Bellman type  as in \eqref{bellman0} satisfying (C), (L), (B) or  a coercive Hamiltonian satisfying (H1), (Ha)  or (H1), (Hb), (Hc).
Let $u$ be a bounded usc subsolution of \eqref{solutions0}  and $v$ a bounded lsc supersolution of \eqref{solutions0}. Then $u \leq v$ in $\bar{\Omega}$.}
\end{thm}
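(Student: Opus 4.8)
The plan is to argue by contradiction, assuming $M := \sup_{\bar\Omega}(u-v) > 0$, and to run a doubling-of-variables argument tailored to censored nonlocal operators of order $\sigma<1$. The first key point, and the source of almost all the difficulty, is the $x$-dependence of the domain of integration in $\mathcal{I}$: when one compares $\mathcal{I}[\phi]$ at a point $x$ near the boundary with the same object at a nearby point $y$, the two integration sets $\{x+j(x,z)\in\bar\Omega\}$ and $\{y+j(y,z)\in\bar\Omega\}$ are genuinely different, and a naive penalization $|x-y|^2/\eps$ does not control this mismatch. To handle this I would first invoke Lemma \ref{claimlip} to rectify the boundary near a boundary point $\hat s$ via the $W^{2,\infty}$-diffeomorphism given by (O), so that locally $\Omega$ looks like a half-space and the distance $d$ is smooth; all estimates are then carried out in the rectified coordinates, where the asymmetry of the integration domains becomes a lower-order perturbation controlled by the Lipschitz bounds in (M), (J0), (J1).

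Next comes the heart of the matter: choosing the test function. Rather than the standard quadratic penalization, I would use the non-regular test function flagged in the introduction — roughly $\Phi(x,y) = u(x) - v(y) - \text{(penalization depending on } |x-y| \text{ and on } d(x), d(y)\text{ separately)}$ — whose role is to force the maximum points $(x_\eps,y_\eps)$ to be essentially \emph{equidistant from the boundary}, i.e. $d(x_\eps)\approx d(y_\eps)$, up to an error that vanishes with the parameters. The mechanism that makes this possible is precisely the dominating-gradient structure of $H$: by (H1) (resp. by the Bellman structure with (B)) the gradient of $u$ at $x_\eps$ is controlled by a negative power of $d(x_\eps)$ with exponent stronger than $\sigma$, and this is strong enough to absorb the nonlocal term and to penalize any discrepancy between $d(x_\eps)$ and $d(y_\eps)$. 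This is exactly the content of Proposition \ref{lembell}/Proposition \ref{lembellc} and the associated Lemma \ref{lemdxdy}/Lemma \ref{lemdxdycoer}, which I would state and use here; in the Bellman case one additionally treats the three boundary pieces separately, using a blow-up supersolution on $\Gamma_{\mathrm{in}}$ (Lemma \ref{lemblowup}), the dominating outward drift on $\Gamma_{\mathrm{out}}$, and an intermediate argument on $\Gamma$, all legitimate because of the \emph{strict} sign conditions in (B).

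Once equidistance is secured, the argument closes along familiar lines. I would evaluate the subsolution inequality at $x_\eps$ and the supersolution inequality at $y_\eps$ (if either point lies on $\partial\Omega$, the generalized boundary condition in Definition \ref{defsol} is ruled out by the gradient bound, so the equation holds up to the boundary), subtract, and estimate the three resulting groups of terms: the zeroth-order difference gives back (almost) $M>0$; the nonlocal terms $\mathcal{I}_\xi[\phi]$ and $\mathcal{I}^\xi[u]-\mathcal{I}^\xi[v]$ are estimated using (M), (J0), (J1) together with the now-small quantity $|d(x_\eps)-d(y_\eps)|$ and the order $\sigma<1$, which makes $\int |z|\wedge 1\, d\mu_x<\infty$ do the work; and the Hamiltonian difference is handled by (Ha) in the sublinear case or by the scaling trick (Hb)–(Hc) in the superlinear case, where one first replaces $v$ by $(1-\mu)v$ or uses the standard $u_\mu = \mu u$ reduction so that (Hb) produces the needed good negative term $\bar C(1-m)|p|^m$. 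Letting the penalization parameters go to zero yields $M \le 0$, a contradiction. The step I expect to be the genuine obstacle is the equidistance localization — making the non-regular test function simultaneously (i) yield a usable gradient bound, (ii) keep the maximum points in the chart where the boundary is rectified, and (iii) control the mismatch of the censored integration domains — since everything downstream is comparatively routine once $d(x_\eps)\approx d(y_\eps)$ is in hand.
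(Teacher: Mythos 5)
Your plan follows essentially the same route as the paper: contradiction from $M>0$, doubling of variables with the non-regular test function $K\eps^{-1}|d(x)-d(y)|$ to localize on equidistant points (Lemmas \ref{lemdxdy}/\ref{lemdxdycoer}), rectification of the boundary via Lemma \ref{claimlip} to control the mismatch of the censored integration domains, the case split $\Gamma_{\mbox{\footnotesize{in}}}/\Gamma_{\mbox{\footnotesize{out}}}/\Gamma$ with the blow-up supersolution, and the $\mu u$ rescaling in the superlinear case — all of which is exactly the content of Propositions \ref{lembell} and \ref{lembellc} and the concluding argument with $\psi(R^{-1}|\cdot|)-\nu d(\cdot)$. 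The approach and the identified key obstacles match the paper's proof.
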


Once the comparison holds, we use the Perron's method for integro-differential equations (see \cite{AT}, \cite{BI}, \cite{SAY} and \cite{US},\cite{ISHP} for an introduction on the method) to get as a corollary existence and uniqueness for the problem \eqref{solutions0} either when $H$ is of  Bellman type either for $H$ coercive. 
  
\begin{cor}\label{corthm}\rm{[Existence and Uniqueness]}
\textit{
Let $\Omega$ be an open subset of $\mathbb{R}^N$ satisfying $(O)$. Assume $(M),(J0), (J1)$- Let $H$ be  either  a Hamiltonian of Bellman type  as in \eqref{bellman0} satisfying (C), (L), (B) or  a coercive Hamiltonian satisfying (H1), (Ha)  or (H1), (Hb), (Hc). 
 Assume $(E)$.
Then, there exists a unique bounded viscosity solution to problem \eqref{solutions0}.}
\end{cor}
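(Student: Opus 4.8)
The plan is to prove the comparison principle of Theorem~\ref{thm} by contradiction, following the classical viscosity-solution strategy of doubling variables, but with the crucial twist -- announced in the introduction -- of localizing on points \emph{equidistant from the boundary}. Assume $M:=\sup_{\bar\Omega}(u-v)>0$. The first step is the reduction to a strict subsolution: by replacing $u$ with $u-\eta$ or multiplying by $1-\eta$ (using (Hb) in the superlinear case, or the coercivity/structure conditions in the sublinear and Bellman cases) we may assume $u$ is a strict subsolution, so that any limiting point of the doubling procedure yields a genuine contradiction. Next, I would double variables with a penalization that is tailored to keep the two maximizing points at the same distance from $\partial\Omega$: roughly, consider
\[
\Phi(x,y)=u(x)-v(y)-\frac{|x-y|^2}{\eps^2}-\chi\big(d(x)-d(y)\big)-\text{(boundary/compactness terms)},
\]
or, as the introduction suggests, a \emph{non-regular} test function built from $\psi$ (the $W^{2,\infty}$ boundary flattening of assumption (O)) that forces $d(x_\eps)=d(y_\eps)$ at the maximum; this is exactly the content of Lemma~\ref{lemdxdy} / Lemma~\ref{lemdxdycoer} invoked inside Proposition~\ref{lembell} / Proposition~\ref{lembellc}, and it is where the superfractional growth $m>\sigma$ (resp.\ the strict-sign condition (B) on the drift) is consumed: the dominating gradient term controls the error created by the singular kernel under this non-smooth penalization.

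Once the maximum point $(x_\eps,y_\eps)$ is located with $d(x_\eps)=d(y_\eps)$, I would distinguish the boundary behaviour. If $x_\eps\in\partial\Omega$ (equivalently $y_\eps\in\partial\Omega$), one uses the generalized Neumann condition in Definition~\ref{defsol}: either the $\tfrac{\partial\phi}{\partial n}$ alternative is triggered -- and here the test function is engineered so that its normal derivative has the wrong sign, which is ruled out precisely on $\Gamma_{\mathrm{out}}$ / $\Gamma$, while on $\Gamma_{\mathrm{in}}$ one instead adds the blow-up supersolution of Lemma~\ref{lemblowup} to push the maximum into the interior -- or the equation holds up to the boundary and we are back to the interior case. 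Then, for interior points, I would subtract the two viscosity inequalities
\[
u(x_\eps)-v(y_\eps)\le \mathcal{I}[\phi(\cdot,y_\eps)](x_\eps)-\mathcal{I}[-\phi(x_\eps,\cdot)](y_\eps)+H(y_\eps,p_\eps)-H(x_\eps,p_\eps),
\]
splitting $\mathcal{I}=\mathcal{I}_\delta+\mathcal{I}^\delta$. The $\mathcal{I}_\delta$ (near-diagonal) terms are estimated using the $C^2$ bound on the test function and (M), giving something like $C\delta^{2-\sigma}/\eps^2$; the $\mathcal{I}^\delta$ (far) terms are estimated using boundedness of $u,v$, giving $C\delta^{-\sigma}$; the key point is that the restricted domains of integration $\{x+j(x,z)\in\bar\Omega\}$ differ for $x_\eps$ and $y_\eps$, and here Lemma~\ref{claimlip} -- the boundary rectification -- is used to compare the two integration sets with an error controlled by $|x_\eps-y_\eps|$ times the singular kernel, which is integrable since $\sigma<1$ and which is absorbed by the gradient term via the localization. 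The Hamiltonian difference $H(y_\eps,p_\eps)-H(x_\eps,p_\eps)$ is handled by (Ha) or (Hc) (resp.\ (C),(L) for Bellman), and the strictness from Step~1 closes the contradiction after sending $\delta\to0$ then $\eps\to0$.

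The main obstacle I expect is the simultaneous management of three coupled difficulties at the interior points: (i) the $x$-dependence of both the measure $\mu_x$ \emph{and} the jump function $j(x,z)$, which makes $\mathcal{I}[u](x_\eps)-\mathcal{I}[v](y_\eps)$ genuinely hard to bound by $|x_\eps-y_\eps|$-type quantities; (ii) the mismatch of the integration domains $\{x_\eps+j(x_\eps,z)\in\bar\Omega\}$ vs.\ $\{y_\eps+j(y_\eps,z)\in\bar\Omega\}$ near the boundary, requiring the change of variables and flattening in Lemma~\ref{claimlip}; and (iii) arranging the non-regular penalization so that it is admissible as a $C^1$ test function in each variable separately while still enforcing $d(x_\eps)=d(y_\eps)$. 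Once all error terms are shown to be of the form $o_\eps(1)+C(\eps)\,\delta^{2-\sigma}+C\,\delta^{-\sigma}$ with $C(\eps)$ \emph{controlled by the coercive/Bellman structure} rather than blowing up, the proof concludes by choosing $\delta=\delta(\eps)$ appropriately and letting $\eps\to0$ against the strict inequality, yielding $M\le0$, a contradiction.

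Finally, for Corollary~\ref{corthm}, existence follows from the comparison principle via Perron's method for integro-differential equations: assumption (E) provides, in the standard way, a bounded subsolution and a bounded supersolution (e.g.\ suitable large constants adjusted near the boundary, or $\pm C(1+d)$-type barriers), so $W(x):=\sup\{w(x): w \text{ bounded subsolution},\ w\le \overline{U}\}$ is well-defined; by the usual arguments its usc and lsc envelopes $W^*$ and $W_*$ are respectively a subsolution and a supersolution of \eqref{solutions0} in the sense of Definition~\ref{defsol} (the nonlocal Perron machinery of \cite{AT},\cite{BI},\cite{SAY} applies since $\sigma<1$ makes $\mathcal{I}^\xi$ well-defined on bounded functions and $\mathcal{I}_\xi$ on $C^1$), hence $W^*\le W_*$ by Theorem~\ref{thm}, forcing $W=W^*=W_*$ to be a continuous solution; uniqueness is immediate from Theorem~\ref{thm} applied to any two solutions in both directions.
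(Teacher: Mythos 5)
Your final paragraph is exactly the paper's proof of Corollary \ref{corthm}: the paper derives it in one line from Theorem \ref{thm} via Perron's method for integro-differential equations, with assumption (E) supplying bounded constant sub- and supersolutions, so the envelope argument and the comparison principle yield a continuous solution, and uniqueness is immediate. The preceding sketch of the comparison principle itself also tracks the paper's actual argument (equidistant-point localization via the non-smooth penalization in $|d(x)-d(y)|$, Lemma \ref{claimlip} for the boundary rectification, the blow-up supersolution on $\Gamma_{\mbox{\footnotesize{in}}}$), so the proposal is correct and takes essentially the same route.
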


\section{A preliminary key lemma}\label{sec:claimlip}
We prove the following Lemma \ref{claimlip}, which is a key result used in the proof of Theorem \ref{thm}.
Roughly speaking, it deals with the difficulties arising from the way the geometry of the boundary interferes with the singularity of the nonlocal terms. The scope is to estimate the nonlocal terms defined in \eqref{i1eps0} on points near the boundary and equidistant from it. The approach of  the proof is essentially  based on a rectification of the boundary, relying on its regularity.
\begin{oss}\rm{
In the case of domains with flat boundary, we do not need Lemma \ref{claimlip} in the proof of Theorem \ref{thm} since the estimation of the nonlocal terms can be carried out more easily. We refer to Remark \ref{flat}, step $4$ of the proof of Theorem \ref{thm}.}
\end{oss}

For any  $\hat{s} \in \partial \Omega$, let  $r=r(\hat{s})$ and $\psi: B_r(\hat{s}) \mapsto \mathbb{R}^N$  a $W^{2,\infty}$-diffeomorphism
as defined in assumption (O).
For $s_1, s_2 \in B_{\frac{r}{2}}(\hat{s})\cap \bar{\Omega}$, let
\begin{equation}\label{i1eps0}
\mathcal{I}[J_{s_1}/J_{s_2}]=\int_{\footnotesize{\begin{matrix}\small{J_{s_1}\setminus J_{s_2}},\\ \small{|z|\leq \d_0}\end{matrix}}}\frac{dz}{|z|^{N+\sigma-1}}
\end{equation}
where 
$
J_{s}=\{z \in \R^N \, |\, s+j(s,z) \in \bar{\Omega}\},
$
 $j$ satisfies assumptions (J0),(J1), $\sigma \in (0,1)$ and $0<\d_0<rC_j^{-1}/2$, where $C_j$ is the constant defined in (J1).

\begin{lem}\label{claimlip}
Let $\mathcal{I}[J_{\cdot}/J_{\cdot}]$  as in \eqref{i1eps0}   and assume $j$ satisfies $(J0), (J1)$. Let $\hat{s} \in \partial \Omega$, $r$ given as above and $s_1, s_2$  satisfying
\begin{equation}\label{ipolem}
d(s_1)=d(s_2), \quad s_1, s_2 \in B_{\frac{r}{2}}(\hat{s})\cap \bar{\Omega}. 
\end{equation}
Then there exists a positive constant $C$  such that
\begin{equation}\label{claim}
\mathcal{I}[J_{s_1}/J_{s_2}]\leq C|s_1-s_2|.
\end{equation}
\end{lem}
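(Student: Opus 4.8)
The plan is to prove \eqref{claim} by reducing it to an elementary integral over a thin ``wedge'' after rectifying $\partial\Omega$ near $\hat s$; the crucial point is to exploit the hypothesis $d(s_1)=d(s_2)$ in order to gain an extra factor $|z|$ in the size of $J_{s_1}\setminus J_{s_2}$, and this is exactly what makes the bound linear in $|s_1-s_2|$ instead of merely $|s_1-s_2|^{1-\sigma}$.

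\emph{Step 1: a refined comparison of $J_{s_1}$ and $J_{s_2}$.} For $|z|\le\d_0$ and $i=1,2$ the segment $[s_i,s_i+j(s_i,z)]$ stays in $B_r(\hat s)$, since $|s_i-\hat s|\le r/2$, $j(s_i,0)=0$ and $|j(s_i,z)|\le C_j|z|\le C_j\d_0<r/2$ by (J1); on $B_r(\hat s)$, by (O), $d$ is the smooth signed distance, with $|\nabla d|\le1$ and $\nabla d$ Lipschitz. Writing $G_i(z)=d(s_i+j(s_i,z))-d(s_i)=\int_0^1\nabla d(s_i+tj(s_i,z))\cdot j(s_i,z)\,dt$ and using $d(s_1)=d(s_2)$, one has $d(s_1+j(s_1,z))-d(s_2+j(s_2,z))=G_1(z)-G_2(z)$. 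I would then estimate the integrand of $G_1-G_2$ by inserting $\pm\,\nabla d(s_1+tj(s_1,z))\cdot j(s_2,z)$ and bounding the two resulting terms via $|\nabla d|\le1$, the Lipschitz bound on $\nabla d$, $|j(s_i,z)|\le C_j|z|$ and $|j(s_1,z)-j(s_2,z)|\le D_j|z|\,|s_1-s_2|$ from (J1). This produces a constant $C_1$, depending only on the data, with
\[
\bigl|d(s_1+j(s_1,z))-d(s_2+j(s_2,z))\bigr|\le C_1\,|s_1-s_2|\,|z|\qquad(|z|\le\d_0),
\]
whence, since $d(s_1+j(s_1,z))\ge0>d(s_2+j(s_2,z))$ on $J_{s_1}\setminus J_{s_2}$,
\[
\{z\in J_{s_1}\setminus J_{s_2}:\ |z|\le\d_0\}\ \subseteq\ \{z:\ 0\le d(s_1+j(s_1,z))\le C_1|s_1-s_2|\,|z|,\ |z|\le\d_0\}.
\]
This is the step I expect to carry the whole argument: dropping $d(s_1)=d(s_2)$ leaves only a tube of width $\sim|s_1-s_2|$ independent of $|z|$, which at the end yields only the sublinear bound $|s_1-s_2|^{1-\sigma}$.

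\emph{Step 2: rectification.} In the integral $\mathcal I[J_{s_1}/J_{s_2}]$ I would carry out successively the substitutions $w=j(s_1,z)$, $y=s_1+w$, $\zeta=\psi(y)$ and the translation $\xi=\zeta-\psi(s_1)$. By (J0) the map $w\mapsto j^{-1}(s_1,w)$ is $A_j$-Lipschitz and has Jacobian bounded by $A_j^N$; by (J1), $|w|\le C_j|z|$; by (O), $\psi$ is bi-Lipschitz on $B_r(\hat s)$ with $\psi_N=d$, so $d(y)=\zeta_N$ and $\psi_N(s_1)=d(s_1)=:d_0\ge0$. Assembling these, $|z|^{-(N+\sigma-1)}\le C\,|\xi|^{-(N+\sigma-1)}$, $dz\le C\,d\xi$, the condition $|z|\le\d_0$ becomes $|\xi|\le R$ for some $R\sim\d_0$, and Step 1 turns the integral into
\[
\mathcal I[J_{s_1}/J_{s_2}]\ \le\ C\int_W\frac{d\xi}{|\xi|^{N+\sigma-1}},\qquad
W=\bigl\{\xi\in\R^N:\ |\xi|\le R,\ -d_0\le\xi_N\le -d_0+C_2|s_1-s_2|\,|\xi|\bigr\},
\]
with $C,C_2,R$ depending only on the data.

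\emph{Step 3: the model integral.} In polar coordinates $\xi=\rho\omega$,
\[
\int_W\frac{d\xi}{|\xi|^{N+\sigma-1}}
=\int_0^R\rho^{-\sigma}\,\mathcal H^{N-1}\!\Bigl(\bigl\{\omega\in S^{N-1}:\ -\tfrac{d_0}{\rho}\le\omega_N\le -\tfrac{d_0}{\rho}+C_2|s_1-s_2|\bigr\}\Bigr)d\rho.
\]
The angular set is empty unless $\rho\gtrsim d_0$, and it is a slab of ``height'' $C_2|s_1-s_2|$ in the $\omega_N$-coordinate; for $N\ge3$ its $\mathcal H^{N-1}$-measure is $\le C|s_1-s_2|$ uniformly, and since $\sigma<1$ the weight $\rho^{-\sigma}$ is integrable on $(0,R)$, so the integral is $\le C|s_1-s_2|$, which is \eqref{claim}. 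For $N=2$ one must be slightly more careful near the ``pole'' $\rho\approx d_0$, where the slab measure can be $\sim|s_1-s_2|^{1/2}$; but this happens only for $\rho$ in an interval of length $\lesssim d_0|s_1-s_2|\le(\operatorname{diam}\Omega)|s_1-s_2|$, contributing $\lesssim d_0^{\,1-\sigma}|s_1-s_2|^{3/2}\le C|s_1-s_2|$, while on the rest of the range the slab measure is again $\lesssim|s_1-s_2|$; for $N=1$, $W$ is directly an interval of length $\lesssim|s_1-s_2|$ on which $|\xi|\ge d_0/(1+C_2|s_1-s_2|)$, so $\int_W|\xi|^{-\sigma}\lesssim d_0^{\,1-\sigma}|s_1-s_2|\le C|s_1-s_2|$.
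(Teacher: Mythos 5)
Your proposal is correct and, in its essential mechanism, coincides with the paper's proof: both arguments rectify the boundary via the diffeomorphism $\psi$ of assumption (O) and both exploit the hypothesis $d(s_1)=d(s_2)$ to show that $J_{s_1}\setminus J_{s_2}$ (for $|z|\le\d_0$) is contained in a wedge of thickness $C|s_1-s_2|\,|z|$ rather than a tube of fixed width $C|s_1-s_2|$ — your Step 1 identity $d(s_1+j(s_1,z))-d(s_2+j(s_2,z))=G_1(z)-G_2(z)$ is exactly the paper's cancellation $\int_0^1 D\psi_N(ts_2+(1-t)s_1)\cdot(s_1-s_2)\,dt=\psi_N(s_1)-\psi_N(s_2)=0$, and your domain $W$ is the paper's domain $D$ in \eqref{intw}. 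The only genuine divergence is in the evaluation of the model integral: the paper integrates out the normal variable $w_N$ first (Fubini) after splitting according to the sign of $-a+|s_1-s_2||w'|$, reducing everything to $\int_{|w'|\le\d_0}|w'|^{-(N+\sigma-2)}dw'<\infty$ for $w'\in\R^{N-1}$ and $\sigma<1$, uniformly in the dimension; you instead pass to polar coordinates and estimate spherical slab measures. Your route works, but note that your patch for $N=2$ is slightly loose: for $\rho$ with $1-d_0/\rho=\eps$ small but larger than $h:=C_2|s_1-s_2|$, the slab measure is of order $h/\sqrt{\eps}$, which is not $\lesssim h$ pointwise as you claim; one must actually integrate $\rho^{-\sigma}h(1-d_0/\rho)^{-1/2}$ over $\rho\in[d_0,2d_0]$ (which does yield $\lesssim h\,d_0^{1-\sigma}\lesssim h$), so the conclusion survives but the intermediate assertion needs this extra line. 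The paper's Fubini-based computation avoids this pole issue altogether, which is what it buys over the polar-coordinate route.
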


\begin{proofwb}
\begin{step1}-{\textit{Rectification of the boundary.}}
\upshape
We observe that since $s_1,s_2 \in B_{\frac{r}{2}}(\hat{s})\cap \bar{\Omega}, \d_0<rC_j^{-1}/2$ and by (J1), we have for any $|z|\leq \d_0$
\begin{equation}\label{d0}
s_1+j(s_1,z),\, s_2+j(s_2,z)\in B_r(\hat{s}). 
\end{equation}
By assumption (O), we  describe the domain of integration of $\mathcal{I}[J_{s_1}/J_{s_2}]$ through the diffeomorphism $\psi$   as follows
\begin{equation}\label{firsts}
s_1+j(s_1,z)\in \bar{\Omega}\Leftrightarrow\psi_N(s_1+j(s_1,z)) \geq 0, 
\end{equation}
\begin{equation}\label{seconds}
 s_2+j(s_2,z) \notin \bar{\Omega}\Leftrightarrow\psi_N(s_2+j(s_2,z))< 0.
\end{equation}
We observe that by assumption (O) and \eqref{ipolem}, we have
\begin{equation}\label{limitd}
\psi_N(s_1)=\psi_N(s_2).
\end{equation}
We proceed performing a change of variable in order to write the set of integration in terms of $\psi_N(s_1)$. 
In other words, we write
\begin{equation}\label{cofvclaimlip}
\psi(s_1+j(s_1,z))-\psi(s_1)=w,
\end{equation}
that is,
$
j(s_1,z)=\psi^{-1}\left(\psi(s_1)+w\right)-s_1.
$
By \eqref{cofvclaimlip}, we write \eqref{firsts} as follows
\begin{equation}\label{firstw}
w_N+\psi_N(s_1)\geq 0.
\end{equation}
Moreover, by (J1), (O) and since $\psi \in W^{2,\infty}$, we get
\begin{equation}\label{estzw}
\tilde{C}_j|z|\leq |j(s_1,z)|\leq |\psi^{-1}(\psi(s_1)+w-s_1)|\leq ||D\psi^{-1}|| _{\infty}|\psi(s_1)+w-\psi(s_1)|\leq ||D\psi^{-1}|| _{\infty} |w|.
\end{equation}
Similarly one can prove an analogous upper bound for $|w|$, getting
\begin{equation}\label{ref3}
\hat{C}|z|\leq |w|\leq \tilde{C} |z|,
\end{equation}
for some constants $\hat{C},\tilde{C}$ depending on the Lipschitz constants of $\psi, \psi^{-1}$ and on $\tilde{C}_j, C_j$ of (J1).

In the following step, we rewrite the set of integration  of $\mathcal{I}[J_{s_1}/ J_{s_2}]$ in a different way using the change of variable \eqref{cofvclaimlip} and the previous estimates.
\end{step1}
\begin{step2}-{\textit{Rewriting the set of integration.}}
\upshape
 By \eqref{cofvclaimlip} and if $\psi_N(s_2+j(s_2,z))\leq0$, we have
\begin{eqnarray}\label{ast}
w_N+\psi_N(s_1)\nonumber &=&\psi_N(s_2+j(s_2,z))+(\psi_N(s_1+j(s_1,z))-\psi_N(s_2+j(s_2,z)))\\ &\leq&(\psi_N(s_1+j(s_1,z))-\psi_N(s_2+j(s_2,z))).
\end{eqnarray}
For convenience of notation, let for the moment
\begin{equation}\label{shortji}
s(t)=ts_2+(1-t)s_1, \quad \zeta(t)=tj(s_2,z)+(1-t)j(s_1,z).
\end{equation}
Note that 
$
s(0)+\zeta(0)=s_1+j(s_1,z)$, $s(1)+\zeta(1)=s_2+j(s_2,z).
$
Then, since $\psi \in W^{2,\infty}$ and by \eqref{ast} we  write
$$
w_N+\psi_N(s_1) \leq\int_0^1 D\psi_N(s(t)+\zeta(t)) \cdot (s_1+j(s_1,z)-(s_2+j(s_2,z)) dt=A_1+A_2,
$$
where
$$
A_1=\int_0^1 [D\psi_N(s(t)+\zeta(t))-D\psi_N(s(t))] \cdot (s_1+j(s_1,z)-(s_2+j(s_2,z)) dt,
$$
$$
A_2=\int_0^1 D\psi_N(s(t)) \cdot (s_1-s_2)+\int_0^1 D\psi_N(s(t)) \cdot (j(s_1,z)-j(s_2,z))dt.
$$
From now on we denote by $C$ any positive constant which may change from line to line. 
By definition of $\zeta(t)$
\begin{equation}\label{bdt}
|\zeta(t)|\leq 2C_j|z| \quad \mbox{for any } t\in [0,1].
\end{equation}
Then, by \eqref{bdt}, (J1) and \eqref{estzw} we get
\begin{equation*}
A_1 \leq C\int_0^1|\zeta(t)|(|s_1-s_2|+|j(s_1,z)-j(s_2,z)|)dt \leq C|w||s_1-s_2|.
\end{equation*}
Now we analyse $A_2$. Note that  by \eqref{shortji} and  \eqref{limitd}
$$
\int_0^1 D\psi_N(s(t)) \cdot (s_1-s_2)=\int_0^1 D\psi_N((ts_2+(1-t)s_1) \cdot (s_1-s_2)=\psi_N(s_1)-\psi_N(s_2)=0.
$$
Moreover, since $\psi \in W^{2,\infty}$, by (J1) and \eqref{estzw}
\begin{equation*}
\int_0^1 D\psi_N(s(t)) \cdot (j(s_1,z)-j(s_2,z)))dt\leq C|w||s_1-s_2|.
\end{equation*}
Then we have
$
A_2\leq C|w||s_1-s_2|.
$
We denote $
a=\psi_N(s_1)
$
and observe
$
a\geq 0.
$
By \eqref{firstw} and the previous arguments, we get
\begin{equation}\label{lastw}
 -a\leq w_N \leq -a +C|s_1-s_2||w|.
\end{equation}
Then, we perform the change of variable in $\mathcal{I}[J_{s_1}/J_{s_2}]$ by (J0), (J1) and using that $\psi \in W^{2,\infty}$ and by \eqref{lastw} and  \eqref{ref3}, we get 
\begin{equation}\label{intw}
\mathcal{I}[J_{s_1}/J_{s_2}]\leq \bar{C}\int_{D} \frac{dw}{|w|^{N+\sigma-1}},
\end{equation}
where
$$
D=\{w \in \mathbb{R}^N \, : \, -a\leq w_N \leq -a +C|s_1-s_2||w|,\, 0<|w| \leq \tilde{C}\d_0\}.
$$
and $\bar{C}, \tilde{C}, C$ are positive constants, which we suppose  $\bar{C}=\tilde{C}=C=1$, by no loss of generality and for simplicity of exposition.
\end{step2}
\begin{step3}-{\textit{Estimate on $\tilde{D}$.}}
\upshape
 We introduce the following notation: 
\begin{equation}\label{betad}
d=(1-|s_1-s_2|)^{-1}, \quad \beta=(1+|s_1-s_2|)^{-1}.
\end{equation}
Note that by the second assumption in \eqref{ipolem}, $|s_1-s_2|\leq r$. Without loss of generality we can suppose $r\leq \frac{1}{2}$, so that we have $|s_1-s_2|\leq 1/2$.
Then
\begin{equation}\label{beta}
2\geq d\geq 1, \quad 1\geq \beta \geq \frac{1}{2}.
\end{equation}
Note that, if $w \in \tilde{D}$, then
\begin{equation}\label{domain}
 -a\leq w_N \leq  -a+|s_1-s_2||w'|+|s_1-s_2||w_N|.
\end{equation}
We identify two cases, depending on the sign of  $-a+|s_1-s_2||w'|$ and we denote
$$
D_1=\left\{w'\, |\, -a+ |s_1-s_2||w'|\geq 0,|w'|\leq \d_0\right\},
$$
and 
$$
D_2=\left\{w'\, |\, -a+|s_1-s_2||w'|< 0, |w'|\leq \d_0\right\}.
$$
Observe that, if $w\in \tilde{D} \cap D_2$, then
$
-a+|s_1-s_2||w'|< 0 
$
 and \eqref{domain}  implies
$
w_N < 0
$
and in particular
$
-a\leq w_N\leq -\beta a+\beta |s_1-s_2||w'|< 0.
$
Otherwise, if  $w \in \tilde{D}\cap D_1$, then
$
-a+|s_1-s_2||w'|\geq 0
$
and  $w_N$ can assume both negative and positive values. In particular \eqref{domain} implies
$
-a\leq w_N\leq -da +d|s_1-s_2||w'|.
$
Note also that  $-da +d|s_1-s_2||w'|\geq 0$.\\
By all the previous observations, we  write
\begin{equation}\label{divvar}
\int_{\tilde{D}} \frac{dw}{|w|^{N+\sigma-1}}=\int_{\tilde{D}} \frac{dw_Ndw'}{(|w'|^2+|w_N|^2)^{\frac{N+\sigma-1}{2}}} \leq \mathcal{F}_1+\mathcal{F}_2,
\end{equation}
where
$$
\mathcal{F}_1=\int_{D_1}\int_{-a}^{-da +d|s_1-s_2||w'|} \frac{dw_Ndw'}{(|w'|^2+|w_N|^2)^{\frac{N+\sigma-1}{2}}},
$$
$$
\mathcal{F}_2=\int_{D_2}\int_{-a}^{-\beta a+\beta|s_1-s_2||w'|}\frac{dw_Ndw'}{(|w'|^2+|w_N|^2)^{\frac{N+\sigma-1}{2}}}.
$$
For $\mathcal{F}_1$, we use that 
$
\frac{1}{|w'|^2+|w_N|^2}\leq \frac{1}{|w'|^2}
$
and by Fubini's Theorem, we integrate in the $N$-variable and we get
\begin{equation}\label{f1f2}
\mathcal{F}_1\leq \int_{D_1}\int_{-a}^{-da +d|s_1-s_2||w'|} \frac{dw_Ndw'}{|w'|^{N+\sigma-1}}\leq \int_{D_1} \frac{-da +d|s_1-s_2||w'|+a}{|w'|^{N+\sigma-1}}dw'.
\end{equation}
By the first of \eqref{betad} and \eqref{beta} and since $da \geq 0$, we have
$
-da +d|s_1-s_2||w'|+a=-da|s_1-s_2|+d|s_1-s_2||w'|\leq 2|s_1-s_2||w'|.
$  Therefore
\begin{equation}\label{primaest}
 \mathcal{F}_1\leq d|s_1-s_2| \int_{D_1} \frac{dw'}{|w'|^{N+\sigma-2}}.
\end{equation}
From now on we denote by $C$ any positive constant which may change from line to line.
Note that, since $w' \in \mathbb{R}^{N-1}$ and $\sigma <1$, we have
\begin{equation}\label{intdef}
\int_{D_1}\frac{dw'}{|w'|^{N+\sigma-2}}\leq C.
\end{equation}
Then by the previous observations, we get
\begin{equation}\label{f1f2fin}
\mathcal{F}_1\leq C|s_1-s_2|.
\end{equation}
Now we analyse  $\mathcal{F}_2$.
For simplicity of notation, we denote
$$
\zeta(w')=\int_{-a}^{-\beta a+\beta|s_1-s_2||w'|} \frac{dw_N}{\left(|w'|^2+w_N^2\right)^{\frac{N+\sigma-1}{2}}}
$$
and then, by Fubini's Theorem, we have
\begin{equation}\label{int}
\mathcal{F}_2=\int_{D_2}\zeta(w')dw'.
\end{equation}
We split the domain as follows
\begin{equation}\label{twoints}
\int_{D_2}\zeta(w')dw=\int_{D_2\cap\{a\leq |w'|\}}\zeta(w')dw'+\int_{\small{D_2\cap \{a> |w'|\}}}\zeta(w')dw'.
\end{equation}
We estimate the first term by
\begin{equation}\label{B1}
\int_{\small{D_2\cap  \{a\leq |w'|\}}}\zeta(w')dw'\leq \int_{\small{D_2\cap \{a\leq |w'|\}}}\frac{-\beta a+\beta|s_1-s_2||w'|+a}{|w'|^{N+\sigma-1}}dw' \leq  C|s_1-s_2|,
\end{equation}
where in the first inequality we used that $
-\beta a+\beta|s_1-s_2||w'|+a\leq 2|w'||s_1-s_2|,
$  since $\beta \leq 1$ and $a\leq |w'|$, and in the second inequality we used \eqref{intdef}.

Take now the second term in \eqref{twoints}. Note that, if $a > |w'|$, by  \eqref{betad} and  \eqref{beta},  we have 
$
-\beta a+\beta |s_1-s_2||w'|\leq-\beta a d^{-1}\leq -a4^{-1}\leq 0.
$
By all the previous observations, since the function
$
w_N \mapsto \frac{1}{\left(|w'|+w_N^2\right)^{\frac{N+\sigma-1}{2}}}
$
is increasing on the negative half line, we have
\begin{equation}\label{zeta1}
\zeta(w')\leq \frac{|s_1-s_2|(a+|w'|)}{\left(|w'|^2+4^{-2}a^2\right)^{\frac{N+\sigma-1}{2}}} \leq 2^{N+\sigma-1}|s_1-s_2|\frac{a+|w'|}{\left(|w'|^2+a^2\right)^{\frac{N+\sigma-1}{2}}}.
\end{equation}
Then 
\begin{equation}\label{c12}
\int_{\small{D_2\cap\{|w'|\leq a\}}}\frac{a+|w'|}{\left(|w'|^2+a^2\right)^{\frac{N+\sigma-1}{2}}}\leq 2a
\int_{D_2}\frac{dw'}{|(w',a)|^{N+\sigma-2}}\leq C\int_{D_2}\frac{dw'}{|w'|^{N+\sigma-2}}\leq C
\end{equation}
and coupling \eqref{zeta1} and \eqref{c12}, we get
\begin{equation}\label{B2}
\int_{\small{D_2\cap \{a\geq |w'|\}}}\zeta(w')dw'\leq C|s_1-s_2|.
\end{equation}
Then coupling \eqref{int}, \eqref{twoints}, \eqref{B1} and \eqref{B2}, we obtain
\begin{equation}\label{Bfin}
\mathcal{F}_2\leq C|s_1-s_2|
\end{equation}
and we conclude the proof by coupling \eqref{intw}, \eqref{divvar},  \eqref{f1f2fin}  and \eqref{Bfin}.\quad \quad \quad \quad $\Box$
\end{step3}
\end{proofwb}

\section{Proof of the comparison principle}\label{compstaz}
We prove Theorem \ref{thm} and we split the proof into two parts, depending whether $H$ is of Bellman type or coercive. 

\subsection{Hamiltonians of Bellman type}
The proof of Theorem \ref{thm} follows mainly by the following proposition, which we prove first. At the end of the proof of Proposition \ref{lembell}, we will prove Theorem \ref{thm}.

\begin{prop}\label{lembell}
Let $\Omega$ be an open subset of $\mathbb{R}^N$ satisfying $(O)$. Let \,$\mathcal{I}$ as in \eqref{idol}  and assume $\mu$ satisfies $(M)$, $j$ satisfies $(J0), (J1)$. Let $H$ be a Hamiltonian of Bellman type  as in \eqref{bellman0} satisfying (C), (L), (B) and let $u,v$ be respectively bounded  sub and supersolutions to \eqref{solutions0}.  
Then the function
$$
\omega(x):= u(x)-v(x)
$$
satisfies, in the viscosity sense, the equation
\begin{equation}\label{res}
\left\{
 \begin{array}{ll}
 \omega-\mathcal{I}[\omega](x)- B|D\omega|\leq 0 &\mbox{in} \, \,  \Omega, \\
                 \frac{\partial \omega}{\partial n}\leq 0 \, \, &\mbox{on} \, \, \partial \Omega,\\
                 \end{array}
\right.\,
\end{equation}
where $B$ is a positive constant depending on the data.
\end{prop}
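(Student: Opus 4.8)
The plan is to show that $\omega = u - v$ is a viscosity subsolution of the linearized problem \eqref{res} by the standard device of testing at a maximum of $\omega - \phi$, splitting this maximum simultaneously into a near-maximum of $u$ and a near-minimum of $v$ via a doubling-of-variables argument, and then passing to the limit. More precisely, let $\phi \in C^1(\mathbb{R}^N)$ and let $\bar{x}$ be a strict local maximum of $\omega - \phi$ over $\bar{B}_{C_j\xi}(\bar{x}) \cap \bar{\Omega}$. For $\eps > 0$ consider the function
\begin{equation*}
(x,y) \mapsto u(x) - v(y) - \frac{|x-y|^2}{\eps^2} - \phi(x) - |x - \bar{x}|^2,
\end{equation*}
and let $(x_\eps, y_\eps)$ be a maximum point over $(\bar{B}_{C_j\xi}(\bar{x}) \cap \bar{\Omega})^2$. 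By the classical arguments (using that $\bar{x}$ is a strict maximum and $u$ usc, $v$ lsc), one has $x_\eps, y_\eps \to \bar{x}$, $|x_\eps - y_\eps|^2/\eps^2 \to 0$, and $u(x_\eps) - v(y_\eps) \to \omega(\bar{x})$ as $\eps \to 0$.

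Next I would write down the viscosity inequalities. At $x_\eps$ the function $x \mapsto u(x) - [\,v(y_\eps) + |x-y_\eps|^2/\eps^2 + \phi(x) + |x-\bar x|^2\,]$ has a max, so with test function $\phi_1(x) := |x-y_\eps|^2/\eps^2 + \phi(x) + |x-\bar x|^2$ we get the subsolution inequality for $u$; similarly at $y_\eps$, with $\phi_2(y) := -|x_\eps - y|^2/\eps^2$, the supersolution inequality for $v$. Subtracting, the first-order terms combine: $D\phi_1(x_\eps) - D\phi_2(y_\eps) = D\phi(x_\eps) + 2(x_\eps - \bar x)$, and by the Bellman structure \eqref{bellman0} together with (L),
\begin{equation*}
H(x_\eps, D\phi_1(x_\eps)) - H(y_\eps, D\phi_2(y_\eps)) \geq -B_1|D\phi(x_\eps) + 2(x_\eps-\bar x)| - \omega_b\!\left(|x_\eps-y_\eps| + \tfrac{|x_\eps-y_\eps|^2}{\eps^2}\right)
\end{equation*}
for a constant $B_1$ controlled by $\|b\|_\infty$ and the Lipschitz constant of $b$ (the cost terms $l$ are handled by (C) and contribute another modulus). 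Passing to the limit $\eps \to 0$ these error terms vanish and one is left, in the interior case, with $\omega(\bar x) - (\text{nonlocal terms}) - B|D\phi(\bar x)| \le 0$. On the boundary, one uses the $\min$ in Definition \ref{defsol}(i) for $u$ and the $\max$ for $v$; if the normal-derivative alternative is selected for either one, it directly yields $\frac{\partial \phi}{\partial n}(\bar x) \le 0$ after the limit, giving the boundary inequality in \eqref{res}; otherwise the equation holds up to the boundary as in the interior.

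The delicate point is the treatment of the nonlocal terms: the viscosity inequalities for $u$ and $v$ are phrased with a splitting $\mathcal{I}_\xi[\phi_i] + \mathcal{I}^\xi[\cdot]$ at an arbitrary threshold $\xi$, and the measures $\mu_{x_\eps}$, $\mu_{y_\eps}$ and the integration domains $J_{x_\eps}$, $J_{y_\eps}$ differ. Here I would use (M) to compare the densities (cost $D_\mu|x_\eps - y_\eps|$ against an $|z|^{-(N+\sigma)}$ weight, which is integrable against $|z|\wedge 1$ for $\sigma<1$) and control the mismatch of the censored domains $J_{x_\eps}\triangle J_{y_\eps}$; for $x_\eps, y_\eps$ near the boundary this is exactly where Lemma \ref{claimlip} is invoked — but one must first reduce to the equidistant case, which in turn relies on the superfractional growth of $H$ forcing the doubling variables to localize on points with $d(x_\eps) \approx d(y_\eps)$. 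On the singular part $\mathcal{I}_\xi$, the difference $\mathcal{I}_\xi[\phi_1](x_\eps) - \mathcal{I}_\xi[\phi_2](y_\eps)$ is estimated by a second-order Taylor expansion of the smooth test functions, noting that the $|x-y_\eps|^2/\eps^2$ pieces cancel up to controllable error and $|x-\bar x|^2$, $\phi$ contribute $O(1)$ bounded by $\xi^{1-\sigma}$, which tends to $0$ as $\xi \to 0$ after $\eps\to0$. Assembling all these, letting first $\eps \to 0$ and then $\xi \to 0$, yields \eqref{res}. The main obstacle is precisely this bookkeeping for the nonlocal operator near $\partial\Omega$ — reconciling the $x$-dependent domains $J_{x_\eps}$, $J_{y_\eps}$ with the singularity of $\mu$ — and that is why Lemma \ref{claimlip} and the localization on equidistant points (to be established in Lemma \ref{lemdxdy}) are the technical heart of the argument.
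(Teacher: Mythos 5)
Your proposal has the right high-level architecture (doubling of variables, writing the two viscosity inequalities, bookkeeping of the censored nonlocal terms via Lemma \ref{claimlip}), but the test function you actually write down cannot deliver the one step you yourself identify as essential: the reduction to equidistant points $d(x_\eps)=d(y_\eps)$. With the penalization $|x-y|^2/\eps^2$ alone you only get $|x_\eps-y_\eps|\to 0$, hence $|d(x_\eps)-d(y_\eps)|\to 0$, which is not enough: Lemma \ref{claimlip} requires \emph{exact} equality $d(s_1)=d(s_2)$ to produce the bound $C|s_1-s_2|$ on $\mathcal{I}[J_{s_1}/J_{s_2}]$, and this bound is then multiplied by the $O(\eps^{-1})$ Lipschitz constant of the penalization, so nothing short of $\mathcal{I}^{\xi'}[J_{x_\eps}/J_{y_\eps}]=O(|x_\eps-y_\eps|)$ with $\eps^{-1}|x_\eps-y_\eps|\to 0$ closes the argument. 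The paper's device is to add to the doubled test function the extra term $K\eps^{-1}\chi_\delta(|d(x)-d(y)|)$ (a regularization of $K\eps^{-1}|d(x)-d(y)|$, with $K$ large depending on the constants $\gamma, C_2$ of Lemma \ref{lemh}); if $d(x)\neq d(y)$ at the maximum, this term contributes a normal gradient of size $K\eps^{-1}$, and the strict sign of $b\cdot n$ on $\Gamma_{\mbox{\footnotesize{out}}}$ and $\Gamma$ (assumption (B), via Lemma \ref{lemh}) makes the Hamiltonian produce a term of order $\eps^{-1}$ that beats the nonlocal term of order $\eps^{-\sigma}$, $\sigma<1$ — a contradiction. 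Note also the asymmetry: when $d(x)>d(y)$ only the subsolution inequality is used, when $d(x)<d(y)$ only the supersolution inequality; this is why the symmetric test function $\phi((x+y)/2)$ and a first-order penalization $\eps^{-1}\chi_\eps(|x-y|)$ are used rather than your $\phi(x)+|x-y|^2/\eps^2$. So the localization is driven by the drift's strict transversality, not generically by ``superfractional growth''; in your setup there is no mechanism producing it.

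Two further gaps. First, the case $x_0\in\Gamma_{\mbox{\footnotesize{in}}}$ (drift pointing inward for all controls) is absent from your proposal, and there the equidistance strategy fails because Lemma \ref{lemh} gives the coercivity with the wrong sign; the paper instead adds a small multiple of a blow-up supersolution $U_r$ (Lemma \ref{lemblowup}) to the test function, which keeps the maximum points at distance $\bar\delta>0$ from the boundary and makes the estimate of $\mathcal{I}^{\xi'}[J_x/J_y]$ trivial by dominated convergence. Second, your treatment of the boundary alternative is not correct as stated: if the Neumann alternative held for $u$ at $x_\eps\in\partial\Omega$ it would give $\frac{\partial\phi_1}{\partial n}(x_\eps)\le 0$ for the full test function $\phi_1$, whose normal derivative is dominated by the $O(\eps^{-2})$ penalization term, and this does not pass to $\frac{\partial\phi}{\partial n}(\bar x)\le 0$. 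The correct argument is the contrapositive: assume $\frac{\partial\phi}{\partial n}(x_0)>0$, then show via the Taylor expansion of the distance function that the normal derivatives of the doubled test functions at $x_\eps$, $y_\eps$ are strictly positive (here the $K\eps^{-1}$ term helps again), so the Neumann alternatives are excluded and the $F$-inequalities must hold.
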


\begin{prooflbwb}
Let $x_0\in \bar{\Omega}$ and $\phi \in C^1(\mathbb{R}^N)$ such that $\omega-\phi$ has a  maximum point  at $x_0$ in $\bar{B}_{C_j \xi}(x_0) \cap \bar \Omega$ for some $0<\xi<1$. Note that the restriction $\xi<1$ is only for simplicity of exposition. We can suppose that $x_0$ is a strict maximum point by considering $\tilde{\phi}=\phi(x)+ k|x-x_0|^2$ for $k$ sufficiently large. In particular, note that we have the following property
$$
\lim_{\eps\to 0^+}\sup_{|x-x_0|\leq \eps} |D\tilde{\phi}(x)-D\phi(x_0)|=0.
$$
From now on, we consider the test function $\tilde{\phi}$ and we still denote it by $\phi$, with some abuse of notation.
We  observe that if $x_0 \in \Omega$ the proof is rather standard, since in this case the maximum points $(x,y)$ of $u-v-\phi$ converge as $\eps \to 0$ to $(x_0,x_0)$ and hence they are bounded away from the boundary for $\eps$ small enough. This last property implies that we can directly use the equations  and then proceed as in the following case.

Let $\Gamma_{\footnotesize{\mbox{in}}}, \Gamma_{\footnotesize{\mbox{out}}}, \Gamma$ be defined respectively in \eqref{gammain}, \eqref{gammaout} and \eqref{gamma} and recall they satisfy (B). We suppose $x_0 \in \partial \Omega$ and we split the proof depending if 
\begin{itemize}
\item[(a)]
$
x_0\in \Gamma_{\footnotesize{\mbox{in}}};
$
\item[(b)] 
$x_0 \in \Gamma_{\footnotesize{\mbox{out}}};
$
\item[(c)] 
$x_0\in \Gamma.
$ 
\end{itemize}
In  case $(a)$  we use the existence of the blow-up supersolution  which explodes at the boundary  and allows us to keep the maximum points far from the boundary. 
Since the proof in this case  is inspired by a similar approach used in \cite{BCGJ}, we give the details at the end of the proof of $(b)$ and $(c)$ in Remark \ref{proofa}.
Now we treat case $(b)$ and $(c)$. Since the proofs are similar, we treat them at the same time.

We suppose that
\begin{equation}\label{hpnorm}
\frac{\partial \phi}{\partial n}(x_0)>0,
\end{equation}
and prove that the F-viscosity inequality of Definition \eqref{defsol} hold for $\omega$.
\begin{step1}{Localising on equidistant points (that is, $d(x)=d(y)$).}
\upshape
Let $\eps>0$ and  $d$ be a function as  in Remark \ref{distance}. We double the variable by   introducing the function for $\eps, \d>0$
\begin{equation}\label{phid}
\tilde{\phi}(x,y)=\phi((x+y)/2)+\eps^{-1}\chi_\eps(|x-y|)+K\eps^{-1}\chi_\d(|d(x)-d(y)|), 
\end{equation}
where  $\chi_\eps: \, \mathbb{R}\to \mathbb{R}$ (and similarly $\chi_\d$) is defined as follows
\begin{equation}\label{chieps0}
\chi_\eps(r)=\sqrt{r^2+\eps^4}\quad r \in \mathbb{R}
\end{equation} 
and $K>0$ is a constant large enough such that
\begin{equation}\label{K}
K > (2+C_2)\gamma^{-1},
\end{equation}
where $\gamma, C_2>0$ depend on $x_0$ and are precisely  defined in Lemma \ref{lemh} in the Appendix (for $\hat{s}=x_0$).
Let
\begin{equation}\label{PHI2}
\Phi(x,y)= u(x)-v(y)-\tilde{\phi}(x,y)
\end{equation}
and denote by $(\bar{x}, \bar{y})$ the maximum point of $\Phi$ in $\bar{B}_{2C_j\xi'}(x_0)\cap \bar{\Omega} \times \bar{B}_{2C_j\xi'}(x_0)\cap \bar{\Omega}$ for $0<\xi'<\frac{\xi}{2}$. We observe that $(\bar{x}, \bar{y})$ depends now also on $\d$ and we omit the dependence. \\
Now consider 
\begin{equation}\label{PHI0}
\Psi(x,y)=u(x)-v(y)-\psi(x,y),
\end{equation}
where
\begin{equation}\label{testpsi0}
\psi(x,y)=\phi((x+y)/2)+\eps^{-1}\chi_\eps(|x-y|)+K\eps^{-1}|d(x)-d(y)|,
\end{equation}
where $d$ is the signed distance from the boundary (see Remark \ref{distance}), $\chi_\eps$ is defined as in \eqref{chieps0} and $K$ is as in \eqref{K}. Note that the test function in \eqref{testpsi0} is not differentiable on the points such that $d(x)=d(y)$.
By upper-continuity, $\Psi$ in \eqref{PHI0} attains its maximum over 
$$
A:=\bar{B}_{2C_j\xi'}(x_0)\cap \bar{\Omega}\times \bar{B}_{2C_j\xi'}(x_0)\cap \bar{\Omega}
$$
at a point $(x,y)$. By classical arguments in viscosity solution theory, we get as $\eps \to 0$
\begin{equation}\label{epszero}
x,y \to x_0, \quad \eps^{-1}\chi_\eps(|x-y|)\to 0, \quad \eps^{-1}|d(x)-d(y)|\to 0 
\end{equation}
and
$
u(x)-v(y)-\tilde{\psi}(x,y)\to u(x_0)-v(x_0)-\phi(x_0).
$
Note also that by the second of \eqref{epszero}, it directly follows
\begin{equation}\label{epszerodir}
\eps^{-1}|x-y|\to 0 \mbox{ as } \eps \to 0
\end{equation}
We prove the following key lemma.
\begin{lem}\label{lemdxdy}
Under the above notation, we have
\begin{itemize}
\item[(i)] $\bar{x} \to x, \, \,\bar{y} \to y, \,\, u(\bar{x})\to u(x), \,\, v(\bar{y}) \to v(y)\,\, \mbox{ as } \d \to 0;$
\item[(ii)]
$d(x)=d(y)$;
\end{itemize}
\end{lem}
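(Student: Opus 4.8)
The lemma compares the max point $(\bar x,\bar y)$ of the "regularized" problem $\Phi$ (using the smooth penalization $K\eps^{-1}\chi_\d(|d(x)-d(y)|)$) with the max point $(x,y)$ of the limiting problem $\Psi$ (using the non-smooth penalization $K\eps^{-1}|d(x)-d(y)|$), in the regime $\d\to0$ with $\eps$ fixed. Part (i) is a standard stability statement; part (ii) is the heart of the matter and is exactly the "localization on equidistant points" advertised in the introduction. The approach for (i) is the classical argument for convergence of maximizers under a convergent family of penalizations: since $\chi_\d(r)\to|r|$ uniformly on bounded sets and $0\le\chi_\d(r)-|r|\le\d^2$, the functions $\tilde\phi(\cdot,\cdot)$ converge uniformly to $\psi(\cdot,\cdot)$ on the compact set $A$; combined with upper semicontinuity of $u-v$ and the fact that $(x,y)$ is the (assumed strict, after the initial perturbation) maximizer of $\Psi$, one gets $\bar x\to x$, $\bar y\to y$ along subsequences, and then $\limsup u(\bar x)\le u(x)$, $\liminf v(\bar y)\ge v(y)$ from usc/lsc, while the reverse inequalities follow from $\Phi(\bar x,\bar y)\ge\Phi(x,y)$ and passing to the limit. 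Uniqueness of the limit point (strict max) upgrades this to convergence of the whole family.

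\textbf{Proving (ii).} For the equidistance claim I would argue by contradiction: suppose $d(x)\ne d(y)$, say $d(x)\ne d(y)$ with $|d(x)-d(y)|=:t_0>0$. Near such a point the map $(x,y)\mapsto|d(x)-d(y)|$ is smooth, so $\psi$ is differentiable at $(x,y)$ and one can write the viscosity inequalities for $u$ at $x$ and $v$ at $y$ with genuine $C^1$ test functions. The key point is to exploit that $\chi_\d(r)$ is strictly convex with $\chi_\d'(r)=r/\sqrt{r^2+\d^4}$, so that the smooth penalization $K\eps^{-1}\chi_\d(|d(x)-d(y)|)$ has derivative in the $d$-difference strictly less than $K\eps^{-1}$ in absolute value as long as $|d(x)-d(y)|$ stays bounded away from $0$ — whereas the non-smooth term $K\eps^{-1}|d(x)-d(y)|$ has derivative exactly $\pm K\eps^{-1}$. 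One then subtracts the two viscosity inequalities for $\Phi$ as in the standard doubling argument: the Hamiltonian terms are controlled by (L),(C) as in the rest of the proof of Proposition~\ref{lembell}, the nonlocal terms are handled by the usual maximum-point estimate, and the gradient of the test function splits into the contribution of $\phi((x+y)/2)/2$, the contribution of $\eps^{-1}\chi_\eps(|x-y|)$, and the contribution $K\eps^{-1}\chi_\d'(\cdot)(Dd(x)$ resp. $-Dd(y))$. When $x$ or $y$ lies on $\partial\Omega$ one uses that on $\Gamma_{\mathrm{out}}\cup\Gamma$ the boundary test inequality for $v$ (resp. the argument ruling out loss of the condition) forces the relevant normal-derivative term to have a sign, and it is precisely here that the choice of $K$ in \eqref{K} (via $\gamma,C_2$ from Lemma~\ref{lemh}) makes $K\eps^{-1}$ dominate; the strict inequality coming from $|\chi_\d'|<1$ then produces a contradiction with the viscosity inequality, forcing $|d(x)-d(y)|\to0$, and since we are already at the limit in $\d$ (by part (i) the point $(x,y)$ is the limit of $(\bar x,\bar y)$), in fact $d(x)=d(y)$.

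\textbf{Main obstacle.} The delicate part is \emph{bookkeeping the order of limits}: part (ii) is a statement about the limiting maximizer $(x,y)$ of $\Psi$, but the penalization making equidistance "cheap" is the regularized one in $\Phi$, so one must first establish (i) and then pass $\d\to0$ in an inequality obtained at the level of $\Phi$ — being careful that the constants $\gamma,C_2$ and the threshold for $K$ depend only on $x_0$ (through Lemma~\ref{lemh}) and not on $\eps$ or $\d$. A secondary technical point is that the contradiction argument must be run at the boundary, where one cannot simply use the interior equation; this is where the hypothesis (B) and the partition into $\Gamma_{\mathrm{in}},\Gamma_{\mathrm{out}},\Gamma$ enter, and where one must invoke (in case (a)) the blow-up supersolution of Lemma~\ref{lemblowup} to keep points off the boundary — though for (b),(c) the normal-derivative sign condition \eqref{hpnorm} suffices. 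I expect the estimate $|\chi_\d'(|d(x)-d(y)|)|\le|d(x)-d(y)|/\sqrt{|d(x)-d(y)|^2+\d^4}<1$ together with the lower bound $K\eps^{-1}>(2+C_2)\gamma^{-1}\eps^{-1}$ to be the exact mechanism producing the contradiction, and assembling these with the nonlocal and Hamiltonian terms is the only genuinely laborious piece.
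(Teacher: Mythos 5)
Your part (i) is fine and matches the paper, which dismisses it as classical. For part (ii), however, the mechanism you propose for the contradiction is not the right one, and as stated it would not close.

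First, the paper does \emph{not} subtract the two viscosity inequalities and control the Hamiltonian difference by (C),(L). That symmetric doubling argument is exactly what fails here: the large normal components $\pm K\eps^{-1}n(x)$ and $\mp K\eps^{-1}n(y)$ essentially cancel in the difference, and you are left with no contradiction to $d(x)\neq d(y)$. Instead the paper uses a \emph{single} inequality, chosen according to the sign of $d(x)-d(y)$ and to which of $\Gamma_{\mbox{\footnotesize{out}}},\Gamma$ contains $x_0$: e.g.\ if $d(x)>d(y)$ one writes only the subsolution inequality \eqref{equ0} at $x$, where $D[\psi(\cdot,y)](x)$ contains the term $-K\eps^{-1}n(x)$, and Lemma \ref{lemh} (with $\lambda=\eps^{-1}K$ and the choice \eqref{K} of $K$) gives $H(x,D\psi)\geq\eps^{-1}(\gamma K-C_2)-C\geq\eps^{-1}-C$. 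The contradiction then comes from comparing this with the nonlocal term, which Lemma \ref{estnonloclem} with $\xi=\eps$ bounds by $C\eps^{-\sigma}$: since $\sigma<1$, the inequality $\eps^{-1}-\eps^{-\sigma}\leq C$ fails for small $\eps$. This order comparison $\eps^{-1}$ versus $\eps^{-\sigma}$ — i.e.\ the superfractional dominance of $H$ over $\mathcal{I}$ — is the actual engine of the localization, and it is absent from your write-up.

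Second, the strict convexity of $\chi_\d$ and the bound $|\chi_\d'|<1$ are a red herring. Part (ii) concerns $(x,y)$, the maximizer of $\Psi$ built with the \emph{non-smooth} penalization $K\eps^{-1}|d(x)-d(y)|$, and under the contradiction hypothesis $d(x)\neq d(y)$ this penalization is genuinely $C^1$ near $(x,y)$ with normal derivative exactly $\pm K\eps^{-1}$; the proof is run entirely there, with no reference to $\chi_\d$ or to $(\bar x,\bar y)$. (Moreover $\chi_\d'(t_0)\to1$ as $\d\to0$ for fixed $t_0>0$, so the gap between $|\chi_\d'|$ and $1$ could not produce a uniform contradiction anyway.) Your identification of the remaining ingredients — the case split over $\Gamma_{\mbox{\footnotesize{in}}},\Gamma_{\mbox{\footnotesize{out}}},\Gamma$, the role of \eqref{hpnorm} in ruling out the Neumann alternative so that the $F$-inequalities hold, and the dependence of $\gamma,C_2$ only on $x_0$ — is correct, but the contradiction itself must be extracted asymmetrically from one equation via the coercivity of $H$ in the normal direction against the weaker singularity of the censored operator.
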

\begin{proof}
Note that (i)  follows by classical argument in viscosity solution theory. 
We remark that the proof of (ii) is slightly  different in  case $(b)$ and  case $(c)$. 
We argue by contradiction and we suppose that $d(x)\neq d(y)$. 
  First we prove that the $F$-viscosity inequalities for $u$ and $v$ of Definition \eqref{defsol} hold.
Suppose that $x \in \partial \Omega$, then $d(x)=0$ and $d(y)\neq0$. 
We denote
\begin{equation}\label{hatp}
\hat{p}=\frac{x-y}{|x-y|}
\end{equation}
and we write
\begin{equation}\label{derphiuno}
\frac{\partial \psi}{\partial n}(\cdot,y)(x)=\frac{1}{2}\frac{\partial \phi}{\partial n}((x+y)/2)+\eps^{-1}\chi'_\eps(|x-y|)\hat{p} \cdot n(x) + K\eps^{-1}.
\end{equation}
 Note that
\begin{equation}\label{gradchi}
0\leq \chi'_\eps(|x-y|)\leq 1.
\end{equation}
Note that by \eqref{epszero}, we can suppose that $x,y$ are close to the boundary, by taking $\eps$ small enough. By the Taylor's formula for the distance function, we have for $\eps$ small enough
$$
n(x)\cdot (x-y)+\frac{1}{2}(x-y)^TD^2d(x)(x-y)+ o(|x-y|^2)=d(y)\geq 0
$$
and then
\begin{equation}\label{taylor}
n(x)\cdot(x-y)\geq-||D^2d||_\infty|x-y|^2/2+o(|x-y|^2).
\end{equation}
By \eqref{hatp}, \eqref{gradchi}, \eqref{taylor} and \eqref{epszero}, we have
\begin{equation}\label{taylorgrad}
\eps^{-1}\chi'_\eps(|x-y|)\hat{p}\cdot n(x)\geq o_\eps(1).
\end{equation}
Note that, from \eqref{hpnorm}, for $\eps$ small enough we have also
\begin{equation}\label{phider0}
\frac{\partial \phi}{\partial n}((x+y)/2)>\frac{1}{2}\frac{\partial \phi}{\partial n}(x_0)>0.
\end{equation}
By \eqref{derphiuno}, \eqref{phider0}, \eqref{taylorgrad} and since $K\geq0$,  we conclude for  $\eps$ small enough 
\begin{equation}\label{aubord}
\frac{\partial \psi}{\partial n}(\cdot, y)(x)\geq \frac{1}{4}\frac{\partial \phi}{\partial n}(x_0)+o_\eps(1)+K\eps^{-1}>0.
\end{equation}
Then, since $u$ is a viscosity subsolution and the function $u(\cdot)-v(y)-\psi(\cdot,y)$ has a local maximum at $x$, the $F$-viscosity inequality  of Definition \eqref{defsol}(i) holds. A similar argument can be carried out for $v$.
From now on, we treat separately  \textit{Case $(b)$} ($x_0\in \Gamma_{\mbox{\footnotesize{out}}}$) and \textit{Case $(c)$} ($x_0\in \Gamma$).

\textit{Case $(b)$} \quad In this case $x_0\in \Gamma_{\mbox{\footnotesize{out}}}$, where $\Gamma_{\mbox{\footnotesize{out}}}$ is defined in \eqref{gammaout}.  Suppose $d(x)>d(y)$.  Then, by Definition \eqref{defsol}(i) and by \eqref{aubord}, we have 
\begin{eqnarray}\label{equ0}
u(x)-\mathcal{I}_{\xi'}[\psi(\cdot,y)](x)-\mathcal{I}^{\xi'}[u](x) +H(x,D[\psi(\cdot,y)](x))\leq 0.
\end{eqnarray}
Note that
$$
D[\psi(\cdot,y)](x)=\eps^{-1}(\chi'_\eps(|x-y|)\hat{p}-Kn(x)) +q,
$$
where $\hat{p}$ is defined in \eqref{hatp} and
 \begin{equation}\label{qxqy}
q=D\phi((x+y)/2)/2. 
\end{equation} 
We apply Lemma \ref{lemh}, \eqref{a1} with $\hat{s}=x_0$, $p=\eps^{-1}\chi'_\eps(|x-y|)\hat{p}+q$  and $\lambda=\eps^{-1}K$ and by the definition \eqref{K} of $K$, we get for $\eps$ small
\begin{eqnarray}\label{A.1}
H(x,D[\psi(\cdot,y)](x)) \nonumber &\geq& \eps^{-1}\gamma K-C_2\left|\eps^{-1}\chi'_\eps(|x-y|)\hat{p}+q\right|-C_2\\ \nonumber& \geq &\eps^{-1}(\gamma K-C_2)-C\\ &\geq & \eps^{-1}-C,
\end{eqnarray}
where by $C$, here and in the following, we denote any positive constant independent of $\eps$ which may change from line to line.
To estimate the nonlocal terms we use the following lemma, which we prove in the Appendix.

\begin{lem}\label{estnonloclem}
Let $\mathcal{I}^\xi, \mathcal{I}_\xi$ be as in \eqref{idoxi}, \eqref{idoxi2} and assume  the first of $(M)$ and $ (J1)$. Under the above notation, for any $\xi >0$, there exist a positive constants $C_1$ independents of $\eps$ such that 
\begin{itemize}
\item[(i)]
$
-\mathcal{I}_\xi[\psi(\cdot, y)](x)-\mathcal{I}^\xi[u](x) \geq -\eps^{-1}C_1\xi^{1-\sigma}-C_1\xi^{-\sigma};
$
\item[(ii)]
$
 -\mathcal{I}_\xi[\psi(x, \cdot)](y)-\mathcal{I}^\xi[v](y) \leq \eps^{-1}C_1\xi^{1-\sigma}+C_1\xi^{-\sigma}.
$
\end{itemize}
\end{lem}

Then, by \eqref{A.1}, by Lemma \ref{estnonloclem} $(i)$ with $\xi'=\eps$ and by the boundedness of $u$, we write \eqref{equ0} as follows 
$$
-\eps^{-\sigma}+\eps^{-1} \leq C,
$$
and we reach a contradiction for $\eps $ small enough, since  $C$ is independent of $\eps$ and $\sigma <1$.\\
Now suppose $d(x) <d(y)$. In this case we use the following $F$-viscosity inequality for the supersolution  $v$ 
\begin{eqnarray}\label{eqv0}
v(y)-\mathcal{I}_{\xi'}[-\psi(x,\cdot)](y)-\mathcal{I}^{\xi'}[v](y) +H(y,-D[\psi(x,\cdot)](y))\geq 0.
\end{eqnarray}
We have
$$
D[-\psi(\cdot,y)](x)=\eps^{-1}(\chi'_\eps(|x-y|)\hat{p}+Kn(y)) -q,
$$
where $\hat{p}$ is defined in \eqref{hatp} and  $q$ is defined in \eqref{qxqy}.
Then , for $\eps$ small enough, we apply Lemma \ref{lemh}, \eqref{a2} with  $\hat{s}=x_0$, $p=\eps^{-1}\chi'_\eps(|x-y|)\hat{p} -q$ and $\lambda=\eps^{-1}K$  and by  \eqref{K} we get
\begin{eqnarray}\label{A.2}
H(y,-D[\psi(x,\cdot)](y)) \nonumber &\leq& -\eps^{-1}\gamma K+C_2\left|\eps^{-1}\chi'_\eps(|x-y|)\hat{p} -q\right|\\ \nonumber &\leq& -\eps^{-1}(\gamma K+C_2)+C\\&\leq& -\eps^{-1}-C.
\end{eqnarray}
We proceed as in the previous case, we apply Lemma \ref{estnonloclem} $(ii)$ with $\xi=\eps$ and by \eqref{A.2} and the boundedness of $v$ , we get
$$
\eps^{-\sigma}-\eps^{-1}\geq C
$$
and we reach a contradiction for $\eps$ small enough as above.

\textit{Case $(c)$} \quad In this case  $x_0\in \Gamma$, where $\Gamma$ is defined in \eqref{gamma}. If $d(x)>d(y)$ the proof is the same. If $d(x)<d(y)$ we write again equation \eqref{equ0} and since
$$
D[\psi(\cdot,y)](x)=\eps^{-1}(\chi'_\eps(|x-y|)\hat{p}+Kn(x)) +q,
$$
where $\hat{p}$ is defined in \eqref{hatp},
  we apply Lemma \ref{lemh} \eqref{a4} with  $\hat{s}=x_0$, $p=\eps^{-1}\chi'_\eps(|x-y|)\hat{p}+q$  and $\lambda=\eps^{-1}K$, for $\eps$ enough small, and we conclude as above.
\end{proof}
\end{step1}

\begin{step2}{Writing the viscosity inequalities.}
\upshape
By \eqref{epszero} and Lemma \ref{lemdxdy} (i), from now on we  consider $\d, \eps$ small enough so that 
\begin{equation}\label{epsdsmall}
\bar{x}, \bar{y}, x,y \in B_{C_j\xi'}(x_0) \cap \bar{\Omega}.
\end{equation}
Now we prove that the $F$-viscosity inequalities for $u$ and $v$ hold. We take $\bar{x} \in \partial \Omega$ and we show that the boundary conditions do not hold, so the $F$-viscosity inequalities hold as in Definition 	\eqref{defsol}.  We proceed exactly as in Step $1$,  Lemma \ref{lemdxdy}, so we omit the details. We recall that for all $\d>0$ 
\begin{equation}\label{chidder}
0\leq \chi'_\d(|x-y|)\leq 1 \quad \mbox{ for all } x,y\in \bar{\Omega},
\end{equation}
and we note only that since $d(\bar{x})=0$, we have for $\eps, \d $ small enough
$$
\frac{\partial \tilde{\phi}}{\partial n}(\cdot,\bar{y})(\bar{x})\geq \frac{1}{4}\frac{\partial \phi}{\partial n}(x_0)+K\eps^{-1}\chi_{\d}'(d(\bar{y})) +o_{\d,\eps}(1)>0,
$$
where $o_{\d,\eps}(1)$ means that $\lim_{\d \to 0}o_{\d,\eps}(1)=o_\eps(1)$.
Then  we have
\begin{eqnarray}\label{eq}
u(\bar{x})-v(\bar{y})\nonumber &\leq& H(\bar{y},-D[\tilde{\phi}(\bar{x},\cdot)](\bar{y}))-H(\bar{x},D[\tilde{\phi}(\cdot,\bar{y})](\bar{x})\\ &+&\mathcal{I}^{\xi'}[u](\bar{x})-\mathcal{I}^{\xi'}[v](\bar{y})+\mathcal{I}_{\xi'}[\tilde{\phi}(\cdot,y)](\bar{x}) -\mathcal{I}_{\xi'}[-\tilde{\phi}(\bar{x},\cdot)](\bar{y}).
\end{eqnarray}
Since $\tilde{\phi} \in C^1$, by (J1) and the first of (M), we have
\begin{equation}\label{oxi}
\mathcal{I}_{\xi'}[\tilde{\phi}(\cdot, \bar y)](\bar x) \leq C_j||D\tilde{\phi}||_{L^{\infty}(\bar{B}(0,C_j\xi'))}\int_{\mathbb{R}^n}1_{|z|\leq \xi'}|z|d\mu_{x}(z)=o_{\xi'}(1).
\end{equation}
where $C_j$ is as in (J1) and $o_{\xi'}(1)$ is independent of $\d$. The same holds for $-\mathcal{I}_{\xi'}[-\tilde{\phi}(\bar{x},\cdot)](\bar{y})$.

Note that 
\begin{equation}
\left|D[\tilde{\phi}(\cdot,\bar{y})](\bar{x})-D[-\tilde{\phi}(\bar{x},\cdot)](\bar{y})\right|=\eps^{-1}\left|K\chi'_\d(|d(\bar{x})-d(\bar{y})|)\tilde{p}\left(n(\bar{y})-n(\bar{x})\right)\right| +|D\phi((\bar{x}+\bar{y})/2)|,
\end{equation}
where
\begin{equation}\label{hatpdelta}
\tilde{p}=\frac{d(\bar{x})-d(\bar{y})}{|d(\bar{x})-d(\bar{y})|}.
\end{equation}
For $\d, \eps$ small enough, we suppose that $\bar{x},\bar{y}$  belong to the neighbourhood of the boundary where the distance is smooth.
By \eqref{chidder} and the smoothness of the distance function we have
\begin{eqnarray}\label{diff}
\left|D[\tilde{\phi}(\cdot,\bar{y})](\bar{x})-D[-\tilde{\phi}(\bar{x},\cdot)](\bar{y})\right|\nonumber &\leq & \eps^{-1}K|n(\bar{y})-n(\bar{x})|+|D\phi((\bar{x}+\bar{y})/2)|\\&\leq & \eps^{-1}K|\bar{x}-\bar{y}|+|D\phi((\bar{x}+\bar{y})/2)|.
\end{eqnarray}
By the definition of $H$ and \eqref{diff}, we have
\begin{equation}\label{primah}
H(\bar{y},-D[\tilde{\phi}(\bar{x},\cdot)](\bar{y})) -H(\bar{y},D[\tilde{\phi(}\cdot,\bar{y})](\bar{x}))\leq B\left(|D\phi((\bar{x}+\bar{y})/2)|+K\eps^{-1}|\bar{x}-\bar{y}|\right),
\end{equation}
where $B=\sup_{\footnotesize{x\in \bar{\Omega}, \alpha \in \mathcal{A}}}b(x,\alpha)$. Moreover by $(C), (L)$,  we have
\begin{equation*}
H(\bar{y},D[\tilde{\phi}(\cdot,\bar{y})](\bar{x})-H(\bar{x},D[\tilde{\phi}(\cdot,\bar{y})](\bar{x}))\\\nonumber\leq B|\bar{x}-\bar{y}||D[\tilde{\phi}(\cdot,\bar{y})](\bar{x})| +\omega_l(|\bar{x}-\bar{y}|)
\end{equation*}
and since
\begin{equation}\label{modgrad}
|D[\tilde{\phi}(\cdot,\bar{y})](\bar{x})|\leq K\eps^{-1}+\eps^{-1}|\bar{x}-\bar{y}|+2^{-1}||D\phi||_{L^\infty(B_{2C_j\xi'}(x_0))},
\end{equation}
we get
\begin{equation}\label{secondah}
H(\bar{y},D[\tilde{\phi}(\cdot,\bar{y})](\bar{x})-H(\bar{x},D[\tilde{\phi}(\cdot,\bar{y})](\bar{x}))\leq C\left(\eps^{-1}|\bar{x}-\bar{y}|+|\bar{x}-\bar{y}|\right)+\omega_l(|\bar{x}-\bar{y}|),
\end{equation}
where $C>0$ is a constant depending on $B, K$ and $||D\phi||_{L^\infty(B_{C_j}(x_0))}$. 
By coupling \eqref{primah}, \eqref{secondah}, \eqref{epszerodir} and (i) of Lemma \ref{lemdxdy}, we get
\begin{equation}\label{estham}
H(\bar{y},-D[\tilde{\phi}(\bar{x},\cdot)](\bar{y}))-H(\bar{x},D[\tilde{\phi}(\cdot,\bar{y})](\bar{x})\leq B|D\phi((\bar{x}+\bar{y})/2)|+o_{\d,  \eps}(1),
\end{equation}
where  $o_{\d, \eps}(1)$ means $\lim_{\d \to 0} o_{\d,  \eps}(1)=o_{\eps}(1)$. 
Plugging \eqref{estham} and \eqref{oxi} into \eqref{eq}, we get
\begin{equation}\label{eq0}
u(\bar{x})-v(\bar{y})\leq B|D\phi((\bar{x}+\bar{y})/2)|+\mathcal{I}^{\xi'}[u](\bar{x})-\mathcal{I}^{\xi'}[v](\bar{y})+o_{\d, \eps}(1)+o_{\xi'}(1).
\end{equation}
\end{step2}
\begin{step3}{Sending $\d \to 0$.}
\upshape
We want to send first $\d \to 0$ in \eqref{eq0} and we observe that the nonlocal terms are uniformly bounded in $\d$. 
Consider $\mathcal{I}^{\xi'}[u](\bar{x})$, observing that the same argument works similarly for  $\mathcal{I}^{\xi'}[v](\bar{y})$.
Note that by \eqref{epsdsmall} and (J1), if $|z|< 1$, then  $\bar{x}+j(\bar{x},z) \in B_{2C_j\xi'}(x_0)$. Since $(\bar{x},\bar{y})$ is a maximum point on $\bar{B}_{2C_j\xi'}(x_0)\cap \bar{\Omega} \times \bar{B}_{2C_j\xi'}(x_0)\cap \bar{\Omega}$ of $\Phi$ defined in \eqref{PHI2}, we have for $\d, \eps$ small
\begin{eqnarray}\label{maxv0}
u(\bar{x}+j(\bar{x},z))-u(\bar{x})\nonumber &=&u(\bar{x}+j(\bar{x},z))-v(\bar{y})-(u(\bar{x})-v(\bar{y}))  \\ &\leq& \tilde{\phi}(\bar{x}+j(\bar{x},z),\bar{y})-\tilde{\phi}(\bar{x},\bar{y}).
\end{eqnarray}
Note that   $\chi_\d$ is Lipschitz with Lipschitz constant independent of $\d$ thanks to \eqref{chidder}. Then, by the definition of $\tilde{\phi}$, since  $\chi_\eps, \chi_\d, \phi$ are Lipschitz and by (J1), we have
\begin{equation}\label{firstmax}
u(\bar{x}+j(\bar{x},z))-u(\bar{x})\leq C\eps^{-1}|z|+C|z|,
\end{equation}
which, by the first of (M), gives the uniform boundedness in $\d$ of $\mathcal{I}^{\xi'}[u](\bar{x})$ when $|z|<1$. 
When $|z|\geq 1$, the claim simply follows by  the boundedness of $u$ and the first of (M).\\
Then, we send $\d \to 0$ in \eqref{eq0} and, by the semicontinuity and boundedness of $u$ and $v$ and Lemma \ref{lemdxdy} (i), apply Fatou's Lemma and   we get
\begin{equation}\label{eq003}
u(x)-v(y)\leq B|D\phi((x+y)/2)|+\mathcal{I}^{\xi'}[u](x)-\mathcal{I}^{\xi'}[v](y)+o_{\eps}(1)+o_{\xi'}(1).
\end{equation}
In the next step, we estimate the nonlocal terms by bounds independent of the parameter $\xi'$ small. Indeed, at the end of the proof, we will send first $\xi'\to 0$ and then $\eps \to 0$.  Note also that now, thanks to Lemma \ref{lemdxdy} (ii), we have that $d(x)=d(y)$.
\end{step3}

\begin{step4}{Estimate of the nonlocal terms.}
\upshape
We prove the following lemma.
\begin{lem}\label{lemestnonlocb}
Under the above notation, we have
\begin{equation}\label{leminpiu}
\mathcal{I}^{\xi'}[u](x)-\mathcal{I}^{\xi'}[v](y)\leq  C\eps^{-1}|x-y|+\mathcal{P}_\xi+\mathcal{K}^{\xi}+o_{\eps}(1)+o_{\xi'}(1),\end{equation}
where $\mathcal{K}^\xi, \mathcal{P}_\xi$ are as in \eqref{kxi} and \eqref{pxidef}  and $C>0$ is independent of all the parameters.
\end{lem}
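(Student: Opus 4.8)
The plan is to estimate the difference of the two \emph{censored} integral terms by splitting their distinct domains of integration according to the censoring constraints. Writing $J_x=\{z:x+j(x,z)\in\bar\Omega\}$ and $J_y=\{z:y+j(y,z)\in\bar\Omega\}$, I decompose $\mathcal{I}^{\xi'}[u](x)-\mathcal{I}^{\xi'}[v](y)$ as the integral over $J_x\cap J_y$ of $[u(x+j(x,z))-u(x)]\,d\mu_x-[v(y+j(y,z))-v(y)]\,d\mu_y$, plus the two boundary-discrepancy terms $\int_{J_x\setminus J_y}[u(x+j(x,z))-u(x)]\,d\mu_x$ and $-\int_{J_y\setminus J_x}[v(y+j(y,z))-v(y)]\,d\mu_y$ (all restricted to $|z|\ge\xi'$). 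Each of these is split once more into a near part $\{|z|\le\xi\}$ and a far part $\{|z|\ge\xi\}$, where $\xi$ is a new small parameter to be sent to $0$ only after $\xi'$; the far parts, bounded using the boundedness of $u,v$ together with the first estimate in (M), are exactly what gets recorded as $\mathcal{K}^\xi$.

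On the near part of $J_x\cap J_y$ I use that $(x,y)$ is a maximum point of $\Psi$ over $A$ and that, by (J1) and since $x,y$ are close to $x_0$, the shifted pair $(x+j(x,z),y+j(y,z))$ still lies in $A$ once $|z|\le\xi$ with $\xi$ small, so that
\[
[u(x+j(x,z))-u(x)]-[v(y+j(y,z))-v(y)]\le\psi(x+j(x,z),y+j(y,z))-\psi(x,y).
\]
The increment of $\psi$ splits into three pieces: the $\phi$-piece, which is what gets recorded as $\mathcal{P}_\xi$ (and is ultimately handled through the $C^1$-modulus of $\phi$ as $\xi\to0$); the $\chi_\eps$-piece, bounded using $0\le\chi'_\eps\le1$ and $|j(x,z)-j(y,z)|\le D_j|z||x-y|$ from (J1) by $C\eps^{-1}|z||x-y|$, hence integrable against $d\mu_x$ because $\sigma<1$, contributing $C\eps^{-1}|x-y|$; and the distance-piece, where I crucially invoke $d(x)=d(y)$ from Lemma \ref{lemdxdy}(ii) together with the $W^{2,\infty}$-smoothness of $d$ near $\partial\Omega$ to bound $|d(x+j(x,z))-d(y+j(y,z))|$ by $C(|z||x-y|+|z|^2)$, so that this piece too contributes $C\eps^{-1}|x-y|$ up to a remainder of order $\eps^{-1}\xi^{2-\sigma}$ which vanishes as $\xi\to0$ and is absorbed into $\mathcal{P}_\xi$. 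Replacing $d\mu_x$ by $d\mu_y$ in the $v$-term costs, by the second estimate in (M), a factor $D_\mu|x-y|$ against an increment $\le C\eps^{-1}|z|$ near the diagonal, giving once more $C\eps^{-1}|x-y|$ (and $o_\eps(1)$ on the far part).

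The genuinely delicate term is the boundary-discrepancy part, over $J_x\setminus J_y$ and $J_y\setminus J_x$, where $x+j(x,z)$ and $y+j(y,z)$ sit on opposite sides of $\partial\Omega$ and cannot be compared to each other. Here I use only the one-sided maximum property of $u(\cdot)-v(y)-\psi(\cdot,y)$ at $x$ to get $u(x+j(x,z))-u(x)\le\psi(x+j(x,z),y)-\psi(x,y)\le C\eps^{-1}|z|$ for $z\in J_x$, $|z|\le\xi$ (and symmetrically for $v$ over $J_y\setminus J_x$), so that the remaining integral is $C\eps^{-1}\int_{(J_x\setminus J_y)\cap\{|z|\le\xi\}}|z|^{-(N+\sigma-1)}\,dz$. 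After a change of variables flattening $j$ (using (J0), (J1) to control the Jacobians) this is precisely of the form treated in Lemma \ref{claimlip}, and since $d(x)=d(y)$ and $x,y\in B_{C_j\xi'}(x_0)\subset B_{r/2}(x_0)\cap\bar\Omega$ for $\xi'$ small, that lemma yields the bound $C\eps^{-1}|x-y|$. I expect this to be the main obstacle: it is exactly where the $x$-dependence of the integration set of $\mathcal{I}$ interacts with the geometry of $\partial\Omega$, and it is the reason Lemma \ref{claimlip} was isolated and proved beforehand. Collecting the near-diagonal contribution $\mathcal{P}_\xi+C\eps^{-1}|x-y|$, the boundary-discrepancy contribution $C\eps^{-1}|x-y|$, the measure-discrepancy contribution, and the far contribution $\mathcal{K}^\xi$, and noting that the leftover remainders are $o_\eps(1)$ (recall $\eps^{-1}|x-y|\to0$) or depend on $\xi'$ and vanish as $\xi'\to0$, gives \eqref{leminpiu}.
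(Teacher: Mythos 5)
Your proposal is correct and follows essentially the same route as the paper: the decomposition into the common domain $J_x\cap J_y$ (handled via the maximum-point inequality, yielding $\mathcal{P}_\xi$, $\mathcal{K}^\xi$ and the $C\eps^{-1}|x-y|$ term) and the discrepancy sets $J_x\setminus J_y$, $J_y\setminus J_x$ (handled via the one-sided bound $C\eps^{-1}|z|$ and then Lemma \ref{claimlip}, using $d(x)=d(y)$), with the $x$-dependence of $\mu$ absorbed through the second estimate in (M) exactly as in Remark \ref{ossmux}. If anything you are slightly more careful than the paper in explicitly bounding the distance-term $K\eps^{-1}|d(x')-d(y')|$ coming from the test function (which the paper's inequality \eqref{maxdopod0} silently drops), and the only loose point is notational: the far parts of the discrepancy sets should be recorded as $o_\eps(1)$ (by boundedness of $u,v$ and $|J_x\triangle J_y|\to 0$) rather than folded into $\mathcal{K}^\xi$, which by \eqref{kxi} lives only on $J_x\cap J_y$.
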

\begin{oss}
In the proof of Lemma \ref{lemestnonlocb}, we deeply rely on the assumption $\sigma \in (0,1)$.
\end{oss}
\begin{proof}
 For simplicity of exposition, we first conclude the proof when the measure $\mu$ in the nonlocal terms has no dependence on $x$, i.e. $\mu_x\equiv \mu$. We refer to Remark \ref{ossmux} for details in the case of $x$-dependence.
We write
\begin{equation}\label{123}
\mathcal{I}^{\xi'}[u](x)-\mathcal{I}^{\xi'}[v](y)= \mathcal{I}^{\xi'}[J_x/J_y]+\mathcal{I}^{\xi'}[J_y/J_x] +\mathcal{T}^{\xi'}[J_x\cap J_y], 
\end{equation}
where 
$
J_x=\{z\in \mathbb{R}^n \, | \, x+j(x,z) \in \bar{\Omega}\}
$
and
\begin{equation}\label{jxjy}
\mathcal{I}^{\xi'}[J_x/J_y]=\int_{\footnotesize{\begin{matrix} J_x/J_y, \\  |z|\geq \xi' \end{matrix}}} u(x+j(x,z))-u(x) d\mu(z),
\end{equation}
$$
\mathcal{I}^{\xi'}[J_y/J_x]= \int_{\footnotesize{\begin{matrix} J_y/J_x, \\ |z|\geq \xi'\end{matrix}}} v(y)-v(y+j(y,z)) d\mu(z),
$$
\begin{equation}\label{jxujy}
\mathcal{T}^{\xi'}[J_x\cap J_y]=\int_{\footnotesize{\begin{matrix}J_x\cap J_y, \\ |z|\geq \xi'\end{matrix}}} [u(x+j(x,z))-u(x)-(v(y+j(y,z))-v(y))]d\mu(z).
\end{equation}
Consider $\mathcal{T}^{\xi'}[J_x\cap J_y]$.  Recall that $(\bar{x},\bar{y})$ satisfy  for any $ x',y' \in \bar{B}_{2C_j\xi'}(x_0)\cap \bar{\Omega} \times \bar{B}_{2C_j\xi'}(x_0)\cap \bar{\Omega}$
\begin{equation}\label{maxprimad}
u(\bar{x})-v(\bar{y})-\tilde{\phi}(\bar{x},\bar{y})\geq u(x')-v(y')-\tilde{\phi}(x',y').
\end{equation}
Letting $\d \to 0$ in \eqref{maxprimad}, by (i) of Lemma \ref{lemdxdy}, the definition of $\tilde{\phi}$ and the semicontinuity of $u,v$, we get for any $ x',y' \in \bar{B}_{2C_j\xi'}(x_0)\cap \bar{\Omega} \times \bar{B}_{2C_j\xi'}(x_0)\cap \bar{\Omega}$
\begin{multline}\label{maxdopod0}
u(x')-u(x)-(v(y')-v(y))\leq \eps^{-1}\chi_\eps(|x'-y'|)-\eps^{-1}\chi_\eps(|x-y|)\\+\phi((x'+y'/2)
-\phi((x+y)/2).
\end{multline}
If  $|z|< 1$, then by \eqref{epsdsmall} and (J1), $x+j(x,z),y+j(y,z) \in B_{2C_j\xi'}(x_0)$. Then we write \eqref{maxdopod0} for $x'=x+j(x,z), y'=y+j(y,z)$ and we have
\begin{eqnarray*}
u(x+j(x,z))-u(x)\nonumber&-&(v(y+j(y,z))-v(y)) \\ \nonumber &\leq
& \eps^{-1}\chi_\eps(|x+j(x,z)-y-j(y,z)|)-\eps^{-1}\chi_\eps(|x-y|)\\ &+&\phi((x+j(x,z)+y+j(y,z))/2)
-\phi((x+y)/2).
\end{eqnarray*}
Note that by the Lipschitz continuity of $\chi_\eps$, (J1) and \eqref{epszero}, we have
\begin{equation}\label{chilip}
\eps^{-1}\chi_\eps(|x+j(x,z)-y-j(y,z)|)-\eps^{-1}\chi_\eps(|x-y|)\leq D_j|z|\eps^{-1}|x-y|=|z|o_\eps(1),
\end{equation}
where $D_j$ is defined in (J1) and then 
\begin{multline}\label{maxdopodelta2}
u(x+j(x,z))-u(x)-(v(y+j(y,z))-v(y)) \\\leq  \phi((x+j(x,z)+y+j(y,z))/2)-\phi((x+y)/2)+ |z|o_\eps(1).
\end{multline}
Then by \eqref{maxdopodelta2} and the first of (M),   we get
\begin{equation}\label{nonloc0}
\mathcal{T}^{\xi'}[J_x\cap J_y]\leq \mathcal{P}_{\xi}-\mathcal{P}_{\xi'}+\mathcal{K}^\xi+o_\eps(1),
\end{equation}
where $o_{\eps}(1)$ is independent of $\xi'$ and
\begin{equation}\label{kxi}
\mathcal{K}^{\xi}=\int_{\footnotesize{\begin{matrix} J_x\cap J_y, \\|z|\geq \xi\end{matrix}}}u(x+j(x,z))-u(x)-(v(y+j(y,z))-v(y)) d\mu(z),
\end{equation}
\begin{equation}\label{pxidef}
\mathcal{P}_{\xi}=\int_{\footnotesize{\begin{matrix} J_x\cap J_y, \\ |z|\leq \xi\end{matrix}}}\phi((x+j(x,z)+y+j(y,z))/2)
-\phi((x+y)/2)d\mu(z)
\end{equation}
and
\begin{equation}\label{phi0}
\mathcal{P}_{\xi'}=o_{\xi'}(1),
\end{equation}
 by (J1), the first of (M) and the Lipschitz character of $\phi$.
 
Now we consider the term $\mathcal{I}^{\xi'}[J_x/J_y]$, defined in \eqref{jxjy}, observing that the same argument works  similarly for $\mathcal{I}^{\xi'}[J_y/J_x]$.  Take $0<\d_0<1$ enough small (note that $\d_0$ will be defined more precisely at the end of the proof of Lemma \ref{leminpiu}). We split the domain of integration in $\{z\, : \, |z|\geq \d_0\}$  and $\{z\, : \, \xi' \leq|z|\leq \d_0\}$. 
We write
\begin{equation}\label{pallino}
\mathcal{I}^{\xi'}[J_x/J_y]=\mathcal{I}^{\xi'}[B^c_{\d_0}]+\mathcal{I}^{\xi'}[B_{\d_0}],
\end{equation}
where
$$
\mathcal{I}^{\xi'}[B^c_{\d_0}]=\int_{\footnotesize{\begin{matrix} J_x/J_y, \\ |z|\geq \d_0 \end{matrix}}} u(x+j(x,z))-u(x) d\mu(z),
$$
$$
\mathcal{I}^{\xi'}[B_{\d_0}]= \int_{\footnotesize{\begin{matrix}J_x/J_y, \\ \xi'\leq|z|< \d_0 \end{matrix}}} u(x+j(x,z))-u(x) d\mu(z).
$$
By the boundedness of $u$, we have
$$
\mathcal{I}^{\xi'}[B^c_{\d_0}]\leq 
2C||u||_{\infty}\int_{|z|\geq \d_0} 1_{\footnotesize{\begin{matrix}J_x/J_y \end{matrix}}} d\mu(z),
$$
and since
$
|J_x/J_y|\to 0$ as $\eps \to 0,
$
by the first of (M) and  the Dominated Convergence theorem,  we get
\begin{equation}\label{i1d0}
\mathcal{I}^{\xi'}[B^c_{\d_0}]\leq o_{\eps}(1),
\end{equation}
where $o_\eps(1)$ is independent of $\xi'$. 
For $\mathcal{I}^{\xi'}[B_{\d_0}]$ we use again the maximum point inequality \eqref{maxdopod0} with $x'=x+j(x,z), y'=y$ and since $\phi \in C^1$ and by (J1), the first of (M),  we get 
\begin{equation}\label{i2d0}
\mathcal{I}^{\xi'}[B_{\d_0}]\leq C\eps^{-1}\int_{\footnotesize{\begin{matrix}J_x/J_y, \\ \xi' \leq |z|\leq \d_0 \end{matrix}}} \frac{dz}{|z|^{N+\sigma-1}},
\end{equation}
where we remark that $C>0$ is independent of all the parameters.
We couple \eqref{123}, \eqref{nonloc0}, \eqref{phi0}, \eqref{pallino},  \eqref{i1d0} and \eqref{i2d0} with \eqref{123}  and we get
\begin{equation}\label{ineq1}
 \mathcal{I}^{\xi'}[u](x)-\mathcal{I}^{\xi'}[v](y)\leq C\eps^{-1} \mathcal{I}^{\xi'}[J_x/J_y]+C\eps^{-1} \mathcal{I}^{\xi'}[J_y/J_x]+\mathcal{P}_\xi+\mathcal{K}^{\xi}+o_{\eps}(1)+ o_{\xi'}(1),
\end{equation}
where  for all $x,y \in \mathbb{R}^N$,
we denote
\begin{equation}\label{i1eps}
 \mathcal{I}^{\xi'}[J_{x}/J_{y}]:=\int_{\footnotesize{\begin{matrix}J_{x}/J_y,\\ \xi'\leq |z|\leq \d_0 \end{matrix}}} \frac{dz}
{|z|^{N+\sigma-1}}.
\end{equation}
Now we estimate the term in \eqref{i1eps} by Lemma \ref{claimlip}.
Let $r:=r(x_0)$, where $r(x_0)$ is defined in assumption (O) for $\hat{s}=x_0$. Take $rC_j^{-1}/2>\d_0$. Note that,  by \eqref{epsdsmall},  $(x,y)$ satisfy \eqref{ipolem} for  $\hat{s}=x_0$ and $r=r(x_0)$. Then we apply Lemma \ref{claimlip} by taking $\{s_1,s_2\}=\{x,y\},\hat{s}=x_0$ in order to estimate $\mathcal{I}^{\xi'}[J_{x}/J_{y}],\mathcal{I}^{\xi'}[J_{y}/J_{x}]$ defined in \eqref{i1eps} and we get 
\begin{equation}\label{claimapplied}
\mathcal{I}^{\xi'}[J_{x}/J_{y}]\leq C|x-y|, \quad \mathcal{I}^{\xi'}[J_{y}/J_{x}]\leq C|x-y|.
\end{equation} 
Then the claim of the lemma follows by plugging \eqref{claimapplied} into \eqref{ineq1}.
\end{proof}
Note that Lemma \ref{claimlip} is not necessary when dealing with domains with flat boundary.
In the following remark we consider the case when $\Omega$ is the half space and we show how  the estimate of the nonlocal terms can be  carried out more easily  without  Lemma \ref{claimlip}.


\begin{oss}\label{flat}\rm{
Take 
$
\Omega:= \{(x_1, \cdots, x_N=(x',x_N) \in \mathbb{R}^N\, : \, x_N >0\}.
$
For simplicity, we suppose  that 
$
j(x,z)=z \mbox{ if } x+z\in \bar{\Omega}.
$
Note that  (i),(ii) of Lemma \ref{lemdxdy} read
\begin{equation}\label{dxdyhalf}
\bar{x}_N- \bar{y}_N \to 0\quad \mbox{as } \d \to 0.
\end{equation}
Consider the nonlocal terms in \eqref{eq} and restrict ourselves to a subsequence such that $\bar{x}_N \geq \bar{y}_N$ (if $\bar{x}_N \leq \bar{y}_N$  the argument is similar). Then we can write
\begin{eqnarray*}
\mathcal{I}^{\xi'}[u](\bar{x})-\mathcal{I}^{\xi'}[v](\bar{y})\nonumber &=&\int_{\footnotesize{\begin{matrix}-\bar{x}_N \leq z_N < -\bar{y}_N, \\|z|\geq \xi' \end{matrix}}} [u(\bar{x}+z)-u(\bar{x})]d\mu_{\bar{x}}(z)\\  &+&\int_{\footnotesize{\begin{matrix}-\bar{y}_N\leq z_N, \\ |z|\geq \xi'\end{matrix}}} [u(\bar{x}+z)-v(\bar{y}+z)-(u(\bar{x})-v(\bar{y}))]d\mu_{\bar{x}}(z)\\ \nonumber &:=&\mathcal{I}^{\xi'}[J_{\bar{x}}/J_{\bar{y}}]+\mathcal{T}^{\xi'}[J_{\bar{x}}\cap J_{\bar{y}}],
\end{eqnarray*}
where in the last line we used the same notation as in the previous step, see in particular  \eqref{jxjy}, \eqref{jxujy}.
The term $\mathcal{T}^{\xi'}[J_{\bar{x}}\cap J_{\bar{y}}]$ is treated exactly as in the non flat case (see the previous step). On the contrary, note that in this case the estimate of the term $\mathcal{I}^{\xi'}[J_{\bar{x}}/J_{\bar{y}}]$ is easier, since by \eqref{dxdyhalf}
$
|J_{\bar{x}}/J_{\bar{y}}|\to 0$ as $ \d \to 0
$
 and then by the Dominated Convergence Theorem, we have
$
\mathcal{I}^{\xi'}[J_{\bar{x}}/J_{\bar{y}}]\to0 $ as $ \d \to 0. 
$
}\end{oss}
\end{step4}

\begin{step5}-\textit{Sending the other parameters to their limits.}
\upshape
We couple \eqref{eq003} with \eqref{leminpiu} and we get
$$
u(x)-v(y)-B|D\phi((x+y)/2)|\leq C\eps^{-1}|x-y|+\mathcal{P}_\xi+\mathcal{K}^{\xi}+o_{\eps}(1)+o_{\xi'}(1),
$$
where  $C>0$ is independent of the parameters. Then,  we first send $\xi' \to 0$ by the Dominated Convergence Theorem and we get  
\begin{equation}\label{last1}
u(x)-v(y)-B|D\phi((x+y)/2)|\leq C\eps^{-1}|x-y|+\mathcal{P}_\xi+\mathcal{K}^{\xi}+o_{\eps}(1),
\end{equation}
where $C$ is a constant independent of $\xi$. 
Moreover, since $\phi$ is $C^1$, by the first of (M), the Dominated Convergence Theorem and since $x,y \to x_0$ as $\eps \to 0$, we have
$$
\limsup_{\eps \to 0}\mathcal{P}_\xi\leq \mathcal{I}_\xi[\phi](x_0)
$$
and by the boundedness and semicontinuity of $u, v$  and Lemma \ref{lemdxdy} (i), we apply Fatou's lemma for each $\xi>0$ fixed and we get
$$
\limsup_{\eps\to0}\mathcal{K}^{\xi}\leq \mathcal{I}^{\xi}[\omega(\cdot, t_0)](x_0),
$$
and, by the previous estimates, we conclude  by sending  $\eps \to 0$ in \eqref{last1}. $\Box$
\end{step5}
\end{prooflbwb}

\begin{oss}\label{ossmux}\rm{
We give some details of the analysis of the nonlocal terms in step $4$ when the measure $\mu$ depends on $x$. We write \eqref{123} with
$$
\mathcal{I}^{\xi'}[J_x/J_y]=\int_{\footnotesize{\begin{matrix} J_x/J_y, \\  |z|\geq \xi' \end{matrix}}} u(x+j(x,z))-u(x) d\mu_x(z),
$$
$$
\mathcal{I}^{\xi'}[J_y/J_x]= \int_{\footnotesize{\begin{matrix} J_y/J_x, \\ |z|\geq \xi'\end{matrix}}} v(y)-v(y+j(y,z)) d\mu_y(z),
$$
$$
\mathcal{T}^{\xi'}[J_x\cap J_y]=\int_{\footnotesize{\begin{matrix}J_x\cap J_y, \\ |z|\geq \xi'\end{matrix}}} [u(x+j(x,z))-u(x)]d\mu_{x}(z) -[(v(y+j(y,z))-v(y))]d\mu_{y}(z).
$$
For $\mathcal{I}^{\xi'}[J_x/J_y]$ and $\mathcal{I}^{\xi'}[J_y/J_x]$ we proceed as above (Step $4$), noting that the $x$-dependence  plays  no role by the first of (M). 
For the $\mathcal{T}$-term, we write
$$
\mathcal{T}^{\xi'}[J_x\cap J_y]=\mathcal{T}^{\xi'}_1[J_x\cap J_y]+\mathcal{T}^{\xi'}_2[J_x\cap J_y],
$$
where
$$
\mathcal{T}^{\xi'}_1[J_x\cap J_y]=\int_{\footnotesize{\begin{matrix} J_x\cap J_y, \\ |z|\geq \xi'\end{matrix}}} u(x+j(x,z))-u(x) -(v(y+j(y,z))-v(y))d\mu_{y}(z),
$$
$$
\mathcal{T}^{\xi'}_2[J_x\cap J_y]=\int_{\footnotesize{\begin{matrix}J_x\cap J_y, \\ |z|\geq \xi'\end{matrix}}} [(u(x+j(x,z))-u(x))](d\mu_{x}(z)-d\mu_{y}(z)).
$$
For $\mathcal{T}^{\xi'}_1[J_x\cap J_y]$, we proceed as above (in Step $3$, for $\mathcal{T}^{\xi'}[J_x\cap J_y]$ defined in \eqref{jxujy}) and we prove \eqref{nonloc0}. 
 Now consider $\mathcal{T}^{\xi'}_2[J_x\cap J_y]$ and denote
$$
\mathcal{T}^{\xi'}_2[J_x\cap J_y]=\mathcal{T}^{\xi'}_2[B_{\xi}]+\mathcal{T}^{\xi'}_2[B_{\xi}^c],
$$  
where
$$
\mathcal{T}^{\xi'}_2[B_{\xi}]=\int_{\footnotesize{\begin{matrix}J_x\cap J_y, \\ \xi\geq|z|\geq \xi'\end{matrix}}} [(u(x+j(x,z))-u(x))](d\mu_{x}(z)-d\mu_{y}(z)),
$$
$$
\mathcal{T}^{\xi'}_2[B_{\xi}^c]=\int_{\footnotesize{\begin{matrix}J_x\cap J_y, \\ |z|> \xi\end{matrix}}} [(u(x+j(x,z))-u(x))](d\mu_{x}(z)-d\mu_{y}(z)).
$$
For $\mathcal{T}^{\xi'}_2[B_\xi]$ we use the maximum point inequality \eqref{maxdopod0} and we write for $|z|\leq \xi$
\begin{multline}\label{maxsolox}
u(x+j(x,z))-u(x)\leq \eps^{-1}\chi_\eps(|x+j(x,z)-y|)-\eps^{-1}\chi_\eps(|x-y|)\\+\phi((x+j(x,z)+y)/2)-\phi(x+y)/2.
\end{multline}
Then by the Lipschitz continuity of $\chi_\eps$ and $\phi$, (J1), (M) and \eqref{epszero} we get
\begin{equation}\label{I30}
\mathcal{T}^{\xi'}_2[B_{\xi}]\leq C\int_{\scriptsize{\begin{matrix}J_x\cap J_y, \\ \xi\geq|z|\geq \xi' \end{matrix}}}(\eps^{-1}|z|+|z|)(d\mu_{x}(z)-d\mu_{y}(z))\leq o_{\eps}(1),
\end{equation}
where we observe  $o_\eps(1)$ is independent of $\xi'$ and from now on may change from line to line in the following. 
For $\mathcal{T}^{\xi'}_2[B_{\xi}^c]$, by the boundedness of $u$, (M), \eqref{epszero}, we get
\begin{equation}\label{I300}
\mathcal{T}^{\xi'}_2[B_{\xi}^c]\leq 2||u||_{\infty}\int_{\scriptsize{\begin{matrix}J_x\cap J_y, \\|z|> \xi \end{matrix}}}(d\mu_{x}(z)-d\mu_{y}(z))\leq o_\eps(1).
\end{equation}
Then, by \eqref{I30} and \eqref{I300}, we get
\begin{equation}\label{I3}
\mathcal{T}^{\xi'}_2[J_x\cap J_y] \leq o_{\eps}(1),
\end{equation}
where $o_\eps(1)$ is independent of $\xi'$. From now on the proof is the same as  above.}
\end{oss}

\begin{oss}\label{proofa}\rm{
We give the details of the proof of Proposition \ref{lembell} in case $(a)$, when   $x_0\in \Gamma_{\footnotesize{\mbox{in}}}$ is a  strict maximum point of $\omega-\phi=u-v-\phi$ in $\bar{B}_{Cj\xi}(x_0)\cap \bar{\Omega}$, for $\phi \in C^1({\R^N})$ and some $0<\xi<1$.

The strategy of the proof relies on the existence of a  blow-up supersolution exploding on the boundary, which allows us to keep the maximum points away from the boundary. The existence of such a supersolution is stated in the following lemma, whose proof is given in the Appendix.

\begin{lem}\label{lemblowup}
For any $\bar{x} \in \Gamma_{\mbox{\footnotesize{in}}}$, there exists  $r=r(\bar{x})>0$ 
and a positive function  $U_{r} \in C^2(B_{r}(\bar{x})\cap \Omega)$ satisfying for any $\xi$ small enough (with respect to $r$, that is, $\xi<C_j^{-1}\frac{r}{2}$)
\begin{itemize}
\item[(i)] 
$
-b(x,\alpha)\cdot DU_r -\mathcal{I}_\xi[U_r](x)\geq 0 \quad \mbox{ in }  B_{\frac{r}{2}}(\bar{x})\cap\Omega, \quad \forall \alpha \in \mathcal{A};
$
\item[(ii)] 
$
U_r(x)\geq \frac{1}{\omega_{r}(d(x))} \quad \mbox{in }B_{r}(\bar{x}) \cap \Omega,
$
 for some function $\omega_{r}$ which is non-negative, continuous, strictly increasing in a neighbourhood of $0$ and satisfies $\omega_r(0)=0$. 
\end{itemize}
\end{lem}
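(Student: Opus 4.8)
The plan is to build $U_r$ as a negative power of the distance to the boundary. Fix an exponent $\beta\in(0,1)$ and set $U_r(x):=d(x)^{-\beta}$; by $(O)$ and Remark \ref{distance}, for $r$ small enough $d$ is smooth and positive on $B_r(\bar x)\cap\Omega$, so $U_r$ is well defined, strictly positive and as regular as $d$ there (in particular $C^{1,1}$, which is all that is used to evaluate $DU_r$ and $\mathcal{I}_\xi[U_r]$). Property (ii) then holds \emph{with equality}, taking $\omega_r(t):=t^\beta$, which is non-negative, continuous, strictly increasing near $0$ and satisfies $\omega_r(0)=0$. The whole content is the differential inequality (i), and the strategy is to show that the drift term $-b(x,\alpha)\cdot DU_r(x)$ blows up, as $x\to\partial\Omega$, \emph{strictly faster} than the nonlocal term $\mathcal{I}_\xi[U_r](x)$, and then to shrink $r$ so that the former absorbs the latter on $B_{r/2}(\bar x)\cap\Omega$.

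For the drift term, $DU_r(x)=-\beta\,d(x)^{-\beta-1}Dd(x)$, hence
$$
-b(x,\alpha)\cdot DU_r(x)=\beta\,d(x)^{-\beta-1}\,b(x,\alpha)\cdot Dd(x).
$$
Since $\bar x\in\Gamma_{\mbox{\footnotesize{in}}}$ and $n=-Dd$ near the boundary, $b(\bar x,\alpha)\cdot Dd(\bar x)=-b(\bar x,\alpha)\cdot n(\bar x)>0$ for every $\alpha\in\mathcal{A}$; by compactness of $\mathcal{A}$ and continuity of $(x,\alpha)\mapsto b(x,\alpha)\cdot Dd(x)$ there are $c_1>0$ and $r_0>0$ such that $b(x,\alpha)\cdot Dd(x)\geq c_1$ for all $\alpha\in\mathcal{A}$ and all $x\in B_{r_0}(\bar x)\cap\bar\Omega$. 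Thus $-b(x,\alpha)\cdot DU_r(x)\geq\beta c_1\,d(x)^{-\beta-1}$ on $B_{r_0}(\bar x)\cap\bar\Omega$. This is exactly where $\bar x\in\Gamma_{\mbox{\footnotesize{in}}}$ (with its \emph{strict} sign) is needed: it produces a genuinely exploding lower bound for the first-order term.

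To estimate $\mathcal{I}_\xi[U_r](x)$ I would split the jumps according to their size relative to $d(x)$. On the \emph{near} region $\{0<|z|\leq d(x)/(2C_j)\}$ one has $|j(x,z)|\leq d(x)/2$, so $d(x+j(x,z))\in[d(x)/2,3d(x)/2]$ and a mean-value estimate for $t\mapsto t^{-\beta}$ gives $|U_r(x+j(x,z))-U_r(x)|\leq C\,d(x)^{-\beta-1}|z|$; integrating against $d\mu_x\leq C_\mu|z|^{-(N+\sigma)}dz$ yields a contribution $\leq C\,d(x)^{-\beta-1}\int_0^{d(x)/(2C_j)}\rho^{-\sigma}\,d\rho\leq C\,d(x)^{-\beta-\sigma}$, finite precisely because $\sigma<1$. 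On the \emph{far} region $\{d(x)/(2C_j)<|z|<\xi\}$ I would discard the nonpositive term $-U_r(x)$, change variables $w=j(x,z)$ (so that $d\mu_x(z)\leq C|w|^{-(N+\sigma)}dw$ by $(J0), (J1), (M)$), and decompose dyadically over scales $|w|\sim R$ with $d(x)\lesssim R\lesssim\xi$. On each shell I would use the elementary slab bound $|\{w:\ |w|\sim R,\ 0\le d(x+w)<t\}|\leq C\,R^{N-1}t$ (valid for a $W^{2,\infty}$, hence $C^{1,1}$, boundary), together with $\int_0^{CR}t^{-\beta}\,dt\asymp R^{1-\beta}$ (using $\beta<1$), to get a shell contribution $\leq C\,R^{-(N+\sigma)}R^{N-\beta}=C\,R^{-\sigma-\beta}$; summing the geometric series over the dyadic scales $R\sim 2^k d(x)$ gives again $\leq C\,d(x)^{-\sigma-\beta}$. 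All the constants here depend only on $C_\mu$, $C_j$, $\sigma$, $\beta$ and the boundary, not on $\xi$ nor on $r$ (for $r$ small), so altogether $\mathcal{I}_\xi[U_r](x)\leq C\,d(x)^{-\sigma-\beta}$.

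Combining the two estimates, on $B_{r/2}(\bar x)\cap\Omega$ (where $0<d(x)\leq|x-\bar x|<r/2$) and for every $\alpha\in\mathcal{A}$,
$$
-b(x,\alpha)\cdot DU_r(x)-\mathcal{I}_\xi[U_r](x)\ \geq\ \beta c_1\,d(x)^{-\beta-1}-C\,d(x)^{-\sigma-\beta}\ =\ d(x)^{-\beta-\sigma}\bigl(\beta c_1\,d(x)^{\sigma-1}-C\bigr).
$$
Since $\sigma-1<0$, the bracket is nonnegative as soon as $d(x)^{1-\sigma}\leq\beta c_1/C$; choosing $r\leq r_0$ small enough that also $(r/2)^{1-\sigma}\leq\beta c_1/C$ (and $d$ smooth on $B_r(\bar x)$) makes (i) hold for all admissible $\xi<C_j^{-1}r/2$, which completes the proof. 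The main obstacle is the \emph{far}-region estimate of the nonlocal term: contrary to \cite{BCGJ}, where $\mu$ is $x$-independent and $\mathcal{I}_\xi[U_r]$ stays bounded, here the integrand $d(x+j(x,z))^{-\beta}$ does blow up when the jump lands near the boundary, and one has to check --- via the slab volume bound and $\beta<1$ --- that this singular contribution is only of order $d(x)^{-\sigma-\beta}$, i.e. strictly weaker than the $d(x)^{-\beta-1}$ produced by the drift. That the exponents close up is exactly the place where the hypothesis $\sigma<1$ is used.
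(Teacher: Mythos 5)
Your proof is correct, but it takes a genuinely different route from the paper's. The paper builds the barrier as a \emph{logarithmic} function of the distance, $U_r(x)=-\log d(x)+\tfrac32\log r$, so that $DU_r=d(x)^{-1}n$ and the drift contributes $\gamma d(x)^{-1}$; it then estimates $\mathcal{I}_\xi[U_r]$ by rectifying the boundary with the diffeomorphism $\psi$ from (O), discarding the sign-favourable part of the integral where $\psi_N(x+j(x,z))>\psi_N(x)$, and performing the exact rescaling $y=w/\psi_N(x)$, which collapses the whole nonlocal term to $A\,d(x)^{-\sigma}$ times a single universal finite integral $\int_{0\ge y_N\ge -1}|\ln(1+y_N)|\,|y|^{-(N+\sigma)}dy$ (finite precisely because $\sigma<1$). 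You instead take the \emph{power} barrier $d(x)^{-\beta}$ and control the nonlocal term by a near/far splitting at scale $d(x)$, a dyadic decomposition, and a slab-volume bound for the level sets of $d$, arriving at $C\,d(x)^{-\sigma-\beta}$ against a drift of size $\beta c_1 d(x)^{-\beta-1}$. The relative gap $d(x)^{1-\sigma}$ between drift and nonlocal term is identical in both arguments, and both exploit the strict sign on $\Gamma_{\mbox{\footnotesize{in}}}$ plus compactness of $\mathcal{A}$ in the same way. What the paper's scaling trick buys is brevity: the log profile is exactly self-similar under the dilation $w\mapsto w/\psi_N(x)$, so no dyadic summation or level-set geometry is needed. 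What your argument buys is flexibility and transparency: it works for any $\beta\in(0,1)$, makes explicit where the landing-near-the-boundary singularity of $d(x+j(x,z))^{-\beta}$ is absorbed (the slab bound together with $\beta<1$), and yields a cleaner modulus $\omega_r(t)=t^\beta$ in (ii). One cosmetic remark applying equally to both constructions: since $d$ is only $W^{2,\infty}$ under (O), the barrier is $C^{1,1}$ rather than $C^2$, which, as you note, suffices for evaluating $DU_r$ and $\mathcal{I}_\xi[U_r]$ with $\sigma<1$.
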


\begin{proof}[Proof of case $(a)$]
 Let $r=r(x_0)$ be defined in Lemma \ref{lemblowup} for $\bar{x}=x_0$. We localize the argument in a ball of radius $r$ around $x_0$ and we use the existence of the blow-up function $U_r$ defined  in Lemma \ref{lemblowup} for $\bar{x}=x_0$.
 Let  $0<\xi'<\min\{\xi, C_j^{-1}\frac{r}{4}\}$ and $\eps>0$. We double the variable and we consider $(x,y)$ maximum point on $\bar{B}_{C_j \xi'}(x_0)\cap \bar{\Omega} \times \bar{B}_{C_j\xi'}(x_0)\cap \bar{\Omega}$ of the function
$
\Phi(x,y)= u(x)-v(y)-\tilde{\phi}(x,y),
$
where
$$
\tilde{\phi}(x,y)=\phi\left(\frac{x+y}{2}\right)+\frac{|x-y|^2}{\eps^2}+k[U_r(x)+U_r(y)].
$$
Note that, by (ii) of Lemma \ref{lemblowup}, we have that  $(x,y)\in\bar{B}_{C_j\xi'}(x_0)\cap \Omega \times \bar{B}_{C_j\xi'}(x_0)\cap \Omega$; moreover, again by (ii) of Lemma \ref{lemblowup}, we have for $k$ small enough 
\begin{equation}\label{distaxybd}
d(x), d(y)\geq \omega_r^{-1}\left(\frac{k}{2L}\right)=: \bar{\d},
\end{equation}
where
$
L=||u||_{L^\infty(\bar{B}_{C_j}(x_0)\cap \bar{\Omega})}+||v||_{L^\infty(\bar{B}_{C_j}(x_0)\cap \bar{\Omega})}+||\phi||_{L^\infty(\bar{B}_{C_j}(x_0)\cap \bar{\Omega})}+1.
$
Note that the existence of the blow-up function plays its mayor role here to get \eqref{distaxybd}. This estimate tells us, roughly speaking, that the maximum points are away from the boundary.
For fixed $k$, a standard argument shows that
\begin{equation}\label{epszeroa}
\frac{|x-y|^2}{\eps^2} \to 0 \mbox{ as } \eps \to 0.
\end{equation}
By the previous estimate on $x,y$ and extracting subsequences if necessary, we can assume, without loss of generality, that as $\eps, k \to 0$
\begin{equation}\label{convxykeps}
x, y\to x_0, \quad u(x)-v(y)-\tilde{\phi}(x,y) \to u(x_0)-v(x_0)-\phi(x_0).
\end{equation}
Thanks to \eqref{convxykeps}, we can  take $\eps,k$ small enough so that  $x,y \in B_{\frac{r}{4}}(x_0)\cap \Omega$.  We proceed as in Step $2$ in the above proof, we write the viscosity inequalities \eqref{eq} and, using that $\tilde{\phi} \in C^1$, (J1) and the first of (M), we get 
\begin{eqnarray}\label{eqastrana}
u(x)-v(y)\nonumber &\leq& H(y,-D[\tilde{\phi}(x,\cdot)](y))-H(x,D[\tilde{\phi}(\cdot,y)](x)\\ &+&\mathcal{I}^{\xi'}[u](x)-\mathcal{I}^{\xi'}[v](y)+o_{\xi'}(1),
\end{eqnarray}
where $\lim_{\xi'\to 0} o_{\xi'}(1)=0$ for each $k$ fixed and $o_{\xi'}(1)$  is independent of $\d$.
First we analyse the term $\mathcal{I}^{\xi'}[u](x)-\mathcal{I}^{\xi'}[v](y)$. For simplicity of exposition, we conclude the proof in the case the measure $\mu$ in the nonlocal terms has no dependence on $x$, i.e. $\mu_x\equiv \mu$. The result can be easily extended in the case of $x$-dependence analogously as already shown  in Remark \ref{ossmux} for case $(b)$ and $(c)$.
We use the same notation of Proposition \ref{lembell}, see  \eqref{jxjy}, \eqref{jxujy} and we write
\begin{equation}\label{plugginginto}
\mathcal{I}^{\xi'}[u](x)-\mathcal{I}^{\xi'}[v](y)= \mathcal{I}^{\xi'}[J_x/J_y]+\mathcal{I}^{\xi'}[J_y/J_x] +\mathcal{T}^{\xi'}[J_x\cap J_y]. 
\end{equation}
As in the proof of Proposition \ref{lembell}, the term $\mathcal{T}^{\xi'}[J_x\cap J_y]$ can be estimated as follows  
\begin{eqnarray*}
\mathcal{T}^{\xi'}[J_x\cap J_y]\nonumber &\leq& k\mathcal{I}_{\xi}[U_r](x)+k\mathcal{I}_{\xi}[U_r](y)-k\mathcal{I}_{\xi'}[U_r](x)-k\mathcal{I}_{\xi'}[U_r](y)\\ &+&\mathcal{P}_{\xi}+\mathcal{K}^\xi+o_\eps(1)+o_{\xi'}(1),
\end{eqnarray*}
where we use the notation \eqref{kxi}, \eqref{pxidef} of Proposition \ref{lembell}. Note that  $o_{\eps}(1)$ is independent of $\xi'$.
Since $U_r$ is Lipschitz, by (J1) and the first of (M), we  have
$
\mathcal{I}_{\xi'}[U_r](x),  \mathcal{I}_{\xi'}[U_r](y)\leq o_{\xi'}(1),
$
and then 
\begin{equation}\label{unionek}
\mathcal{T}^{\xi'}[J_x\cap J_y]\leq k\mathcal{I}_{\xi}[U_r](x)+k\mathcal{I}_{\xi}[U_r](y)+\mathcal{P}_{\xi}+\mathcal{K}^\xi+o_{\xi'}(1)+o_\eps(1),
\end{equation}
where $o_\eps(1)$ is independent of $\xi'$.
Now we estimate the terms  $\mathcal{I}^{\xi'}[J_x/J_y]$ and $\mathcal{I}^{\xi'}[J_y/J_x]$.  Thanks to  \eqref{distaxybd}, in this case  the estimate  is easier than in the  cases (b) and (c) treated above.
Take for example $\mathcal{I}^{\xi'}[J_x/J_y]$ (the argument being analogous for $\mathcal{I}^{\xi'}[J_y/J_x]$) and note that by \eqref{distaxybd} the integral is independent of $\xi'$ as soon as $\xi'<\bar{\d}$ where $\bar{\d}$ is defined in \eqref{distaxybd}.  Then by the boundedness of $u$, we have
$$
\mathcal{I}^{\xi'}[J_x/J_y]\leq 
2||u||_{\infty}\int_{|z|\geq \bar{\d}} 1_{\footnotesize{\begin{matrix}J_x/J_y \end{matrix}}} d\mu(z)
$$
and since
$
|J_x/J_y|\to 0 \mbox{ as } \eps \to 0,
$
by the first of (M), the Dominated Convergence theorem, we get
\begin{equation}\label{estafacile}
\mathcal{I}^{\xi'}[J_x/J_y]\leq o_\eps(1),
\end{equation}
where $o_\eps(1)$ is independent of $\xi'$. 
Then plugging \eqref{estafacile} and \eqref{unionek} into \eqref{plugginginto} and then coupling it with \eqref{eqastrana},  we get 
\begin{eqnarray}\label{eqa}
u(x)-v(y)\nonumber &\leq& H(y,-D[\tilde{\phi}(x,\cdot)](y))-H(x,D[\tilde{\phi}(\cdot,y)](x)\\ &+&k\mathcal{I}_{\xi}[U_r](x)+k\mathcal{I}_{\xi}[U_r](y)+\mathcal{P}_{\xi}+\mathcal{K}^\xi+ o_{\eps}(1)+o_{\xi'}(1),
\end{eqnarray}
where $o_\eps(1)$ is independent of $\xi'$.
Now,  by (i) of Lemma \ref{lemblowup}, we  estimate the  integral terms of the left hand side of \eqref{eqa} together with the first order terms involving $U_r$  in  $H(y,-D[\tilde{\phi}(x,\cdot)](y))-H(x,D[\tilde{\phi}(\cdot,y)](x)$ and we get
\begin{multline}\label{esthama}
H(y,-D[\tilde{\phi}(x,\cdot)](y))-H(x,D[\tilde{\phi}(\cdot,y)](x)+k\mathcal{I}_{\xi}[U_r](x)+k\mathcal{I}_{\xi}[U_r](y)\\ \leq B|D\phi((x+y)/2)|+o_{\eps}(1).
\end{multline}
Then, plugging \eqref{esthama} into \eqref{eqa}, we get
\begin{equation}\label{eq003a}
u(x)-v(y)\leq B|D\phi((x+y)/2)|+\mathcal{P}_{\xi}+\mathcal{K}^\xi+o_{\eps}(1)+o_{\xi'}(1),
\end{equation}
where $o_\eps(1)$ is independent of $\xi'$. 
The rest of the proof is the same as in the previous cases, by sending first $\xi'\to 0$, then $\eps\to 0$ and finally $k \to 0$. For the details we refer to the end of the proof of Proposition \ref{lembell}.
\end{proof}
}
\end{oss}

Now we prove Theorem \ref{thm} for $H$ of Bellman type.

\begin{proof1}\rm{
By contradiction, we suppose that 
$
M=\sup_\Omega\{u-v\}>0.
$
Denote
$
\omega(x)=u(x)-v(x) 
$
and  for $\nu>0$, consider
$
\Phi(x)=\omega(x)-\psi(R^{-1}|x|)+\nu d(x),
$
where $\psi$ is a smooth function such that
\begin{equation}\label{psir0}
\psi(s)= \left\{
 \begin{array}{lll}
 0 \, \, &\mbox{for} \, \, 0 \leq s <\frac{1}{2}, \\
                 \mbox{increasing} \, \, &\mbox{for} \, \, \frac{1}{2}\leq s <1,\\
||u||_{\infty}+||v||_{\infty} +1 \, \, &\mbox{for} \, \, s \geq 1.
                 \end{array}
\right.\,
\end{equation}
and  $d$ is the signed distance from the boundary (see Remark \ref{distance}). Note that $\sup \Phi \to M$ as $R \to \infty$ and $\nu \to 0$. Since $\Phi \leq -1/2$ for $|x|$ large and $\nu$  small enough and $M>0$, the function $\Phi$ achieves its positive maximum $\sup \Phi>\frac{M}{2}$ at a point $x$ for $R$ big  and $\nu$ small enough.  We give the details in the case where all maximum points $x$  are located on the boundary.
We have
\begin{equation}\label{omegaM}
\omega(x)=M+o_{R,\nu}(1),
\end{equation}
where with $o_{R,\nu}(1)$ we mean that $o_{R,\nu}(1)\to 0$ if $ R\to \infty, \nu\to0$. 

We use $\phi(\cdot):=\psi(R^{-1}|\cdot|)-\nu d(\cdot)$  as a test function at $x$. Note that, if $x \in \partial \Omega$ and for $\nu > R^{-1}||\psi'||_{L^{\infty}}$, we have
$
\frac{\partial \phi}{\partial n}\geq -R^{-1}||\psi'||_{L^{\infty}}+\nu>0.
$
Then, by Proposition \ref{lembell}, we get
$$
\omega(x)-\mathcal{I}[\phi](x)- B(|D\phi(x)|)\leq 0 \, \, \mbox{ in } \, \,  \Omega.
$$
By Lemma \ref{primophi} (see the  Appendix), 
$
\mathcal{I}[\phi](\cdot), |D\phi(\cdot)| \leq o_{\nu,R}(1)
$
and by \eqref{omegaM}, we get
$$
M+o_{\nu,R}(1)\leq  o_{\nu,R}(1),
$$
and by letting  $R \to \infty$, $\nu \to0$, we get a contradiction since $M>0$.
 }
 \hspace{\stretch{1}} $\Box$ 
\end{proof1}

\subsection{Coercive Hamiltonians}
We proceed analogously as for Hamiltonians of Bellman type and we prove Proposition \ref{lembellc}.
Once proved Proposition \ref{lembellc}, the proof of Theorem \ref{thm} for $H$ coercive follows by standard arguments as already showed for Hamiltonians of Bellman type. We sketch first the proof of Theorem \ref{thm} and then we prove Proposition \ref{lembellc}.


\begin{proof2}\rm{
We just observe that we proceed again by contradiction, supposing that 
$
M=\sup_\Omega\{u-v\}>0.
$
We fix $0<\mu<1$ and define  for $ x \in \Omega$
$
\omega_\mu(x)=\mu u(x)-v(x).
$
We proceed  as in the Bellman case  and  we use Proposition \ref{lembellc} to get
$$
M+o_{\nu,R,\mu}(1)- o_{\nu,R}(1)\leq CA(1-\mu).
$$
Then, by letting  $R \to \infty$, $\nu \to0$ and finally $\mu \to 1$, we get a contradiction since $M>0$ and we conclude the proof.
}\hspace{\stretch{1}} $\Box$ 
\end{proof2}

\begin{prop}\label{lembellc}
Let $\mathcal{I}$ as in \eqref{idol}  and assume  $(M)$, $(J0), (J1)$. Let $H$ be a  coercive Hamiltonian  satisfying (H1), (Ha)  or (H1), (Hb), (Hc) and let $u,v$ be respectively bounded  sub and supersolutions to \eqref{solutions0}.  Let $\mu \in (0,1)$ if $H$ is superlinearly coercive, $\mu=1$ is $H$ is sublinearly coercive.
Then the function
$
\omega(x):=\mu u(x)-v(x)
$
satisfies, in the viscosity sense, the equation
\begin{equation}\label{res}
\left\{
 \begin{array}{ll}
 \omega-\mathcal{I}[\omega](x)- C_{m,\mu}|D\omega|^m\leq A(1-\mu) &\mbox{in} \, \,  \Omega, \\
                 \frac{\partial \omega}{\partial n}= 0 \, \, &\mbox{on} \, \, \partial \Omega,\\
                 \end{array}
\right.\,
\end{equation}
where $A, C_{m,\mu}$ are  positive constants  which depend on the data. Precisely, if $\mu=1$, $C_{m,\mu}=\tilde{C}$ where $\tilde{C}$ is defined  in (Hb)  and, if $\mu\in(0,1)$, $C_{m,\mu}=\bar{C}^{1-m}C^m2^{m-1}m^{-m}(1-\mu)^{1-m}$ 
\end{prop}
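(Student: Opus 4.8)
The plan is to run the proof of Proposition \ref{lembell} essentially verbatim, replacing the Bellman-specific ingredients (the split of $\partial\Omega$ into $\Gamma_{\mathrm{in}},\Gamma_{\mathrm{out}},\Gamma$ and the blow-up supersolution of Lemma \ref{lemblowup}) by a single argument based on coercivity, and replacing the Bellman Hamiltonian increments by the structural assumptions: (Ha) in the sublinear case ($\mu=1$, so $\omega=u-v$) and (Hb), (Hc) in the superlinear case ($\mu\in(0,1)$, so $\omega=\mu u-v$). Concretely: let $\phi\in C^1(\mathbb{R}^N)$ and let $x_0\in\bar\Omega$ be a strict maximum point of $\omega-\phi$ on $\bar B_{C_j\xi}(x_0)\cap\bar\Omega$. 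If $x_0\in\Omega$ the argument is the standard interior one; if $x_0\in\partial\Omega$ I may assume $\frac{\partial\phi}{\partial n}(x_0)>0$ (otherwise the boundary condition in \eqref{res} holds at once) and must derive the $F$-viscosity inequality for $\omega$ at $x_0$.

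As in Step $1$ of the proof of Proposition \ref{lembell}, I would double variables with the non-smooth penalization $\tilde\phi(x,y)=\phi((x+y)/2)+\eps^{-1}\chi_\eps(|x-y|)+K\eps^{-1}\chi_\d(|d(x)-d(y)|)$, applied to $\Phi(x,y)=\mu u(x)-v(y)-\tilde\phi(x,y)$, and prove the coercive counterpart of Lemma \ref{lemdxdy}, namely Lemma \ref{lemdxdycoer}: at the limiting maximum point $d(x)=d(y)$. This is the step that replaces the $\Gamma$-analysis. If $d(x)\neq d(y)$, say $x\in\partial\Omega$ with $d(y)>0$, then the $K\eps^{-1}$ normal term forces $\frac{\partial\tilde\phi}{\partial n}(x)>0$, so $u$ satisfies the interior $F$-inequality there; since $|D\tilde\phi(x)|\ge K\eps^{-1}-C$, assumption (H1) gives $H(x,D\tilde\phi(x))\ge c_0(K\eps^{-1}-C)^m-D\gtrsim\eps^{-m}$, whereas choosing $\xi'=\eps$ in the analogue of Lemma \ref{estnonloclem} bounds the nonlocal contribution by $C(\eps^{-1}\eps^{1-\sigma}+\eps^{-\sigma})=C\eps^{-\sigma}$; since $m>\sigma$ this is negligible against $\eps^{-m}$ and we get a contradiction (symmetrically when $d(x)<d(y)$, using $v$). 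This is the heart of the proof and the place where the superfractional growth $m>\sigma$ is crucial; compared with the Bellman case it is actually simpler, since coercivity applies uniformly on all of $\partial\Omega$, so no blow-up supersolution and no case split are needed.

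Once $d(x)=d(y)$ is secured, the $\chi_\d$-term contributes nothing to the gradient (since $\chi_\d'(0)=0$), so $D_x\tilde\phi(x)$ and $-D_y\tilde\phi(y)$ differ only by $D\phi((x+y)/2)$ and both have size $\le\eps^{-1}+C$. I would then write the two viscosity inequalities, send $\d\to 0$ via Fatou's lemma exactly as in Step $3$ of Proposition \ref{lembell}, and estimate the nonlocal terms exactly as in Lemma \ref{lemestnonlocb}, invoking Lemma \ref{claimlip} to bound $\mathcal I^{\xi'}[J_x/J_y]$ and $\mathcal I^{\xi'}[J_y/J_x]$ by $C\eps^{-1}|x-y|=o_\eps(1)$ (the $x$-dependence of $\mu$ being handled as in Remark \ref{ossmux}). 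The remaining point is the Hamiltonian increment. In the superlinear case I would split, with $p=-D_y\tilde\phi$, $q=D_x\tilde\phi$, $p-q=-D\phi((x+y)/2)$,
\[
H(y,p)-\mu H(x,\mu^{-1}q)=\big(H(y,p)-H(x,p)\big)+\big(H(x,p)-\mu H(x,\mu^{-1}p)\big)+\mu\big(H(x,\mu^{-1}p)-H(x,\mu^{-1}q)\big),
\]
applying (Hc) to the first and third brackets and (Hb) to the middle one; since $m>1$ the term $(1-\mu)\bar C(1-m)|p|^m$ in (Hb) is negative and absorbs both the small term $\omega_1(|x-y|)|p|^m$ (because $\omega_1(|x-y|)\to 0$) and, after Young's inequality, the cross term $C|p-q|(|p|^{m-1}\vee|q|^{m-1})$; optimizing the Young constant yields exactly $A(1-\mu)+C_{m,\mu}|D\phi|^m+o_\eps(1)$ with $C_{m,\mu}=\bar C^{1-m}C^m2^{m-1}m^{-m}(1-\mu)^{1-m}$. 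In the sublinear case ($\mu=1$), the $\eps^{-1}$-scale parts of $D_x\tilde\phi$ and $-D_y\tilde\phi$ cancel and (Ha) gives $H(y,p)-H(x,q)\le\omega_1(|x-y|)(1+|p|)+C|p-q|$, which reduces to $C_{m,1}|D\phi|+o_\eps(1)$ after using $\eps^{-1}|x-y|\to 0$ to dispose of $\omega_1(|x-y|)|p|$. Plugging these in, sending $\xi'\to 0$ and then $\eps\to 0$ as in Step $5$ of Proposition \ref{lembell}, gives the $F$-inequality for $\omega$ in \eqref{res}, and Theorem \ref{thm} for coercive $H$ follows from the contradiction argument already sketched before this proposition. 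I expect the main obstacle to be Lemma \ref{lemdxdycoer} — balancing the coercive growth $\eps^{-m}$ against the censored nonlocal term of order $\sigma<m$ — together with, in the superlinear case, the careful bookkeeping of the Young constant needed to match the stated $C_{m,\mu}$.
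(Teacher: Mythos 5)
Your proposal is correct and follows essentially the same route as the paper's proof: the same doubled non-smooth penalization $\tilde\phi$, the same localization lemma (Lemma \ref{lemdxdycoer}) obtained by pitting the coercive growth $c_0\,\eps^{-m}$ from (H1) against the $O(\eps^{-\sigma})$ nonlocal bound of Lemma \ref{estnonloclem}, the same nonlocal estimates via Lemma \ref{claimlip}, and the same (Hb)/(Hc)-plus-Young optimization producing $C_{m,\mu}=\bar{C}^{1-m}C^m2^{m-1}m^{-m}(1-\mu)^{1-m}$. The only imprecision is your claim that the $\chi_\d$-term contributes nothing to the gradient once $d(x)=d(y)$: the viscosity inequalities must be written at the maximum point $(\bar x,\bar y)$ of the smoothed functional, where $d(\bar x)\neq d(\bar y)$ in general and each gradient carries a normal component $K\eps^{-1}\chi_\d'(|d(\bar x)-d(\bar y)|)$ of full size $K\eps^{-1}$; these cancel only in the difference, up to $C\eps^{-1}|\bar x-\bar y|=o_\eps(1)$ as in \eqref{phi1}--\eqref{phi2}, which is harmless (it merely changes $\eps^{-1}+C$ into $3\eps^{-1}+C$ in the bound for $q$) but should be stated.
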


\begin{prooflbcwb}
We give the details when $H$ has  superlinear form i.e. when $m >1>\sigma$, since the proof in the sublinear case is similar with easier computations. Since the proof is similar to that of Proposition \ref{lembell}, we  focus  only on the main differences.

We start by noting that if $u$ is a subsolution of \eqref{solutions0}, then $
\bar{u}=\mu u
$ is a viscosity subsolution to
\begin{equation}\label{baru}
\bar{u}-\mathcal{I}[\bar{u}](x)+\mu H(x,\mu^{-1}D \bar{u})\leq 0, \quad \mbox{in } \Omega.
\end{equation}
Let $x_0\in \bar{\Omega}$ and $\phi$ a smooth function such that $\omega-\phi$ has a strict maximum point in $\bar{B}_{C_j\xi}(x_0)\cap \bar{\Omega}$ at $x_0$ for some  $0<\xi<1$. 
We suppose that $x_0 \in \partial \Omega$, since the other case being similar and even simpler. 
\begin{step1}-\textit{Localising on equidistant points  (i.e. $d(x)=d(y)$).}
\upshape
We  double the variable and consider the function
\begin{equation}\label{phiprimad}
\Phi(x,y):=\bar{u}(x)-v(y)-\tilde{\phi}(x,y),
\end{equation}
where $\tilde{\phi}$ is as in \eqref{phid} with $K=2$.  Let $(\bar x, \bar y)$  be a maximum point  of $\Phi$  over the set
$
A:=\bar{B}_{2C_j\xi'}(x_0)\cap \bar{\Omega}\times \bar{B}_{2C_j\xi'}(x_0)\cap \bar{\Omega},
$
for $0<\xi'<\frac{\xi}{2}$. Let $\Psi=\bar{u}(x)-v(y)-\psi(x,y)$, where $\psi$ is defined as in \eqref{testpsi0} with $K=2$ and let $(x,y)$ be a point of maximum of $\Psi$ over $A$. By classical argument, we have as $\eps \to 0$
\begin{equation}\label{eqinpiu}
x,y \to x_0, \quad \eps^{-1}\chi_\eps(|x-y|)\to 0, \quad \eps^{-1}|d(x)-d(y)|\to 0.
\end{equation}
We have the following lemma.
\begin{lem}\label{lemdxdycoer}
Under the above notation, we have
\begin{itemize}
\item[(i)] $\bar{x} \to x, \, \,\bar{y} \to y, \,\, u(\bar{x})\to u(x), \,\, v(\bar{y}) \to v(y)\,\, \mbox{ as } \d \to 0,$
\item[(ii)] $d(x)=d(y).$
\end{itemize}
\end{lem}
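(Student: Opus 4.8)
The plan is to mirror the argument of Lemma~\ref{lemdxdy} from the Bellman case, adapting it to the rescaled function $\omega=\mu u-v$ and to the coercive structure of $H$. Part (i) is genuinely standard: since $\psi$ is obtained from $\tilde\phi$ by replacing $K\eps^{-1}\chi_\d(|d(x)-d(y)|)$ with the non-smooth $K\eps^{-1}|d(x)-d(y)|$, and $\chi_\d(r)\to |r|$ uniformly as $\d\to 0$, the usual stability argument for doubling-variable maxima (using the boundedness and upper semicontinuity of $\bar u$ and $-v$ and the fact that $(x,y)$ is a strict maximum of $\Psi$) gives $\bar x\to x$, $\bar y\to y$, $u(\bar x)\to u(x)$, $v(\bar y)\to v(y)$ as $\d\to 0$. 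I would state this and refer to the parallel argument in the proof of Lemma~\ref{lemdxdy}.

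For part (ii) I would argue by contradiction, assuming $d(x)\neq d(y)$, say $d(x)>d(y)$ (the case $d(x)<d(y)$ being symmetric, using the supersolution inequality for $v$ instead). First, exactly as in Step~$1$ of the proof of Lemma~\ref{lemdxdy}, one checks that the test function $\psi(\cdot,y)$ has strictly positive normal derivative at $x$ when $x\in\partial\Omega$: the term $K\eps^{-1}$ dominates, the Taylor expansion of $d$ controls the contribution of $\eps^{-1}\chi_\eps'(|x-y|)\hat p\cdot n(x)$, and $\partial\phi/\partial n((x+y)/2)>0$ for $\eps$ small by \eqref{hpnorm}; hence the $F$-viscosity inequality for the subsolution $\bar u$ of \eqref{baru} is available, and similarly for $v$. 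Writing $D[\psi(\cdot,y)](x)=\eps^{-1}\chi_\eps'(|x-y|)\hat p - 2\eps^{-1}Dd(x)\,\sgn(d(x)-d(y)) + q$ with $q=D\phi((x+y)/2)/2$ — note here $n(x)=-Dd(x)$ in the neighbourhood of the boundary and $K=2$ — the gradient has modulus of order $\eps^{-1}$, so the coercivity assumption (H1), $\mu H(x,\mu^{-1}p)\geq \mu c_0\mu^{-m}|p|^m-\mu D \geq c_0\mu^{1-m}|p|^m - D$, forces the Hamiltonian term in \eqref{baru} to be bounded below by $c\,\eps^{-m}-C$ for some $c>0$. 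The nonlocal terms are controlled by Lemma~\ref{estnonloclem} (with $\xi=\eps$), which bounds $-\mathcal{I}_\eps[\psi(\cdot,y)](x)-\mathcal{I}^\eps[u](x)$ from below by $-C\eps^{-\sigma}$, and $u$ is bounded; plugging into the subsolution inequality yields $-C\eps^{-\sigma}+c\,\eps^{-m}\leq C$, which is impossible for $\eps$ small because $m>\sigma$. This contradiction gives $d(x)=d(y)$.

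The main obstacle — and the place where the coercive case genuinely differs from the Bellman case — is making the gradient lower bound on $H$ precise when $m$ may be less than $1$. Since the drift/transport structure of \eqref{bellman0} is replaced by the pure power growth $|p|^m$, one must be careful that the ``good'' term $\eps^{-m}|p/\eps|^{m}\sim\eps^{-m}$ from coercivity still beats the nonlocal term $\eps^{-\sigma}$; this is exactly where the assumption $m>\sigma$ in (H1) is used, and it is the analogue of the condition $K>(2+C_2)\gamma^{-1}$ in the Bellman setting that made the first-order term the leading term. One subtlety to handle carefully: unlike in the Bellman case where Lemma~\ref{lemh} is invoked, here the relevant estimate is just the elementary inequality $|a-b|^m\geq 2^{1-m}|b|^m-|a|^m$ (or its reverse, depending on the sign of $m-1$) applied with $b=2\eps^{-1}Dd(x)$ and $a$ the bounded-in-$\eps$ remainder $\eps^{-1}\chi_\eps'(|x-y|)\hat p+q$; combined with $\eps^{-1}\chi_\eps'(|x-y|)\to 0$ by \eqref{epszerodir} this still yields a lower bound of order $\eps^{-m}$, and the argument closes as above. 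Finally, as noted after the corresponding step in Proposition~\ref{lembell}, the case $d(x)<d(y)$ in case $(c)$ ($x_0\in\Gamma$) and the treatment of case $(a)$ ($x_0\in\Gamma_{\mathrm{in}}$) via the blow-up supersolution of Lemma~\ref{lemblowup} carry over verbatim, so I would only remark on them rather than repeat the details.
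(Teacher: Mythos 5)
Your proposal is correct and follows essentially the same route as the paper: part (i) by the standard stability argument, and part (ii) by contradiction, using the positivity of the normal derivative to access the viscosity inequality for $\bar u=\mu u$, the lower bound $|D[\psi(\cdot,y)](x)|\geq\eps^{-1}-C$ combined with (H1) to get a term of order $\eps^{-m}$, and Lemma \ref{estnonloclem} with $\xi=\eps$ to bound the nonlocal terms by $C\eps^{-\sigma}$, concluding from $m>\sigma$. The only cosmetic difference is that the paper does not need your auxiliary power inequality: it bounds the modulus of the full gradient directly by the reverse triangle inequality and then raises it to the power $m$, which works uniformly in $m$.
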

\begin{proof}
The proof is very similar to that of Lemma \ref{lemdxdy}. We give a sketch of the proof of (ii). We suppose that
$
\frac{\partial \phi}{\partial n}(x_0)>0,
$
then for $\eps$ small enough 
$
\frac{\partial \psi}{\partial n}((x+y)/2)>\frac{1}{2}\frac{\partial \phi}{\partial n}(x_0)>0.
$
We assume that
$
d(x) > d(y).
$
Then  for $0<\mu<1$, we have
\begin{equation}\label{muu}
\bar{u}(x)-\mathcal{I}^{\xi'}[\bar{u}](x)-\mathcal{I}_{\xi'}[\psi(\cdot,y)](x) +\mu H\left(x,\mu^{-1}D[\psi(\cdot,y)](x)\right)\leq 0.
\end{equation}
Note that
\begin{equation}\label{dphicoerc}
|D[\psi(\cdot,y)](x)|\geq\eps^{-1}-C,
\end{equation}
where  $C>0$ is a constant independent of $\eps$. For the integral terms in \eqref{muu} we proceed as in Proposition \ref{lembell} by using Lemma \ref{estnonloclem}. For the Hamiltonian terms we use assumption (H1) together with \eqref{dphicoerc} and we get
$$
\eps^{-m}\left(-\eps^{m-\sigma}+ c_0\mu^{1-m}\right) \leq C,
$$
which is a contradiction for $\eps $ small, since  $\sigma < m$.
\end{proof}
\end{step1}

\begin{step2}-\textit{Writing the viscosity inequalities.}
\upshape
 We write  the viscosity inequalities for $u$ and $v$  
\begin{eqnarray}\label{ineq1c}
\bar{u}(\bar{x})-v(\bar{y})\leq\mu H(\bar{y},\mu^{-1}D_x \tilde{\phi}(\cdot,\bar{y})(\bar{x}))\nonumber &-&H(\bar{x},-D_y \tilde{\phi}(\bar{x},\cdot)(\bar{y}))\\ &+&\mathcal{I}^{\xi'}[\bar{u}](\bar{x})-\mathcal{I}^{\xi'}[v](\bar{y})+o_{\xi'}(1),
\end{eqnarray} 
where we estimated the $\mathcal{I}_{\xi'}[\phi]$ terms by $o_{\xi'}(1)$ (independent of $\d$)  as in \eqref{oxi}.
Denote
\begin{equation}\label{asterisco}
\mathcal{H}=\mu H(\bar{y},\mu^{-1}D[\tilde{\phi}(\cdot,\bar{y})](\bar{x}))-H(\bar{x}, -D[\tilde{\phi}(\bar{x},\cdot)](\bar{y})).
\end{equation}
We recall that
\begin{equation}\label{phi1}
D[\tilde{\phi}(\cdot,\bar{y})](\bar{x})=\eps^{-1}[\chi'_\eps(|\bar{x}-\bar{y}|)\hat{p}-2\chi'_\d(|d(\bar{x})-d(\bar{y})|)\tilde{p}n(\bar{x})] +D\phi((x+y)/2)/2,
\end{equation}
\begin{equation}\label{phi2}
D[-\tilde{\phi}(\bar{x},\cdot)](\bar{y})=\eps^{-1}[\chi'_\eps(|\bar{x}-\bar{y}|)\hat{p}-2\chi'_\d(|d(\bar{x})-d(\bar{y})|)\tilde{p}n(\bar{y})] -D\phi((x+y)/2)/2,
\end{equation}
where 
\begin{equation}\label{hatpdelta}
\hat{p}=\frac{\bar{x}-\bar{y}}{|\bar{x}-\bar{y}|}, \quad \tilde{p}=\frac{d(\bar{x})-d(\bar{y})}{|d(\bar{x})-d(\bar{y})|}.
\end{equation}
By the smoothness of the distance function 
\begin{equation}
\left|D[\tilde{\phi}(\cdot,\bar{y})](\bar{x})-D[-\tilde{\phi}(\bar{x},\cdot)](\bar{y})\right|\leq 4\eps^{-1}|\bar{x}-\bar{y}| +|D\phi((\bar{x}+\bar{y})/2)|,
\end{equation}
and
$$
|D[\tilde{\phi}(\cdot,\bar{y})](\bar{x})|,|D[-\tilde{\phi}(\bar{x},\cdot)](\bar{y})|\leq q, \quad
q=3\eps^{-1}+2^{-1}||D\phi||_{L^{\infty}(B_{r}(x_0))}.
$$
Thanks to (Hb) and (Hc), we get for $\eps$ small
\begin{multline*}
\mathcal{H}\geq (1-\mu)(m-1)\bar{C}q^m -A(1-\mu)-\omega_1(|\bar{x}-\bar{y}|)(1+q^m)\\-C|D\phi((\bar{x}+\bar{y})/2)|q^{m-1}-4C|\bar{x}-\bar{y}|q^{m}.
\end{multline*}
Note that, by \eqref{eqinpiu} and (iii) of Lemma \ref{lemdxdycoer}, we can take $\epsilon=\epsilon(\mu), \delta=\delta(\mu)$ small enough so that
$$
(1-\mu)(m-1)\bar{C}-\omega_1(|\bar{x}-\bar{y}|)-4C\eps^{-1}|\bar{x}-\bar{y}|>0
$$
and we can write
\begin{eqnarray*}
\mathcal{H} &\geq& (1-\mu)(m-1)\bar{C}q^m/2-C|D\phi((\bar{x}+\bar{y})/2)|q^{m-1}-A(1-\mu)-o_{\d,\eps}(1) \\&\geq&\inf_{q\geq0} \{(1-\mu)(m-1)\bar{C}q^m/2-C(|D\phi((\bar{x}+\bar{y})/2)|)q^{m-1}\}-A(1-\mu)-o_{\d,\eps}(1),
\end{eqnarray*}
where $o_{\d, \eps}(1)$ means that $\lim_{\d \to 0} o_{\d,\eps}(1)=o_{\eps}(1)$. Note that the infimum in the previous expression is attained and therefore
\begin{equation}\label{mathcalh}
\mathcal{H} \geq -C_{m,\mu}|D\phi((\bar{x}+\bar{y})/2)|^m-A(1-\mu)-o_{\d,\eps}(1),
\end{equation}
where $C_{m,\mu}=\bar{C}^{1-m}C^m2^{m-1}m^{-m}(1-\mu)^{1-m}.$ 
Then we couple \eqref{mathcalh}, \eqref{asterisco} and  \eqref{ineq1c}, we let $\d \to 0$ and we get
\begin{eqnarray}\label{eqfinc}
\bar{u}(x)-v(y)\nonumber &-&C_m(1-\mu)^{1-m}|D\phi((x+y)/2)|^m-A(1-\mu)-o_{\eps}(1)\\ &\leq&\mathcal{I}^{\xi'}[\bar{u}](x)-\mathcal{I}^{\xi'}[v](y)+o_{\xi'}(1).
\end{eqnarray}
\end{step2}
\begin{step3}{Estimate of the nonlocal terms.}
\upshape
In order to estimate the nonlocal terms we use the following  lemma. We omit the proof since it is exactly the same as that of Lemma \ref{lemestnonlocb}. We remark that in the proof we deeply rely on the assumption $\sigma \in (0,1)$.
\begin{lem}\label{lemestnonloc}
Under the above notation, we have
\begin{equation}\label{claimlemestnonloc}
\mathcal{I}^{\xi'}[\bar u](x)-\mathcal{I}^{\xi'}[v](y)\leq  C\eps^{-1}|x-y|+\mathcal{P}_\xi+\mathcal{K}^{\xi}+o_{\eps}(1)+o_{\xi'}(1),\end{equation}
where $C>0$ is independent of all the parameters.
\end{lem}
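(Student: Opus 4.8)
The plan is to repeat, almost verbatim, the proof of Lemma \ref{lemestnonlocb}, the only structural change being that the subsolution $u$ is now replaced by $\bar u=\mu u$; this function is still bounded (since $u$ is bounded and $\mu\le1$) and, by construction of $\Phi$ in \eqref{phiprimad}, still enjoys at $(\bar x,\bar y)$ the same maximum-point inequality against the test function $\tilde\phi$. As in the Bellman case I would first carry out the argument for an $x$-independent measure $\mu_x\equiv\mu$, the general case being handled exactly as in Remark \ref{ossmux}. I would start from the splitting
\[
\mathcal{I}^{\xi'}[\bar u](x)-\mathcal{I}^{\xi'}[v](y)=\mathcal{I}^{\xi'}[J_x\setminus J_y]+\mathcal{I}^{\xi'}[J_y\setminus J_x]+\mathcal{T}^{\xi'}[J_x\cap J_y],
\]
with the notation of \eqref{jxjy}--\eqref{jxujy} (with $\bar u$ in place of $u$).

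For the common term $\mathcal{T}^{\xi'}[J_x\cap J_y]$ I would use that $(x,y)$ is a maximum point of $\Psi=\bar u(\cdot)-v(\cdot)-\psi$ and, after letting $\delta\to0$ through Lemma \ref{lemdxdycoer}(i), the $C^1$-relaxation yields the analogue of \eqref{maxdopod0}. Evaluating it at $x'=x+j(x,z)$, $y'=y+j(y,z)$ for $|z|<1$, splitting the domain of integration into $|z|\le\xi$ and $|z|>\xi$, and using the Lipschitz character of $\chi_\eps$ and $\phi$ together with the first bound of (M) and with (J1), one gets $\mathcal{T}^{\xi'}[J_x\cap J_y]\le\mathcal{P}_\xi-\mathcal{P}_{\xi'}+\mathcal{K}^\xi+o_\eps(1)$ with $\mathcal{P}_{\xi'}=o_{\xi'}(1)$, exactly as in \eqref{nonloc0}--\eqref{phi0}.

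For the mismatched terms, say $\mathcal{I}^{\xi'}[J_x\setminus J_y]$ (the other being symmetric), I would fix a small $\delta_0\in(0,1)$ with $rC_j^{-1}/2>\delta_0$, $r=r(x_0)$, and cut the domain into $|z|\ge\delta_0$ and $\xi'\le|z|<\delta_0$. On the former, the boundedness of $\bar u$, the fact that $|J_x\setminus J_y|\to0$ as $\eps\to0$, the first bound of (M) and dominated convergence give a contribution $o_\eps(1)$ independent of $\xi'$; on the latter, the maximum-point inequality applied with $x'=x+j(x,z)$, $y'=y$, together with $\phi\in C^1$, (J1) and (M), bounds the contribution by $C\eps^{-1}\int_{J_x\setminus J_y,\ \xi'\le|z|\le\delta_0}|z|^{-(N+\sigma-1)}\,dz$. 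Since $d(x)=d(y)$ by Lemma \ref{lemdxdycoer}(ii) and $(x,y)\in B_{C_j\xi'}(x_0)\cap\bar\Omega$, the pair $(x,y)$ satisfies \eqref{ipolem} with $\hat s=x_0$, so Lemma \ref{claimlip} (applied with $\{s_1,s_2\}=\{x,y\}$) bounds this last integral by $C|x-y|$. Collecting the three estimates gives \eqref{claimlemestnonloc}.

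I do not expect any genuine obstacle here: passing from $u$ to $\mu u$ affects none of the structural steps, and all the delicate geometric work has already been isolated in Lemma \ref{claimlip}. The one ingredient where the subfractional range $\sigma\in(0,1)$ is indispensable — and hence the only nontrivial point — is the local integrability of $|w'|^{2-\sigma-N}$ over $\R^{N-1}$ exploited inside Lemma \ref{claimlip} to produce the linear bound $C|x-y|$; without it the $\eps^{-1}$-weighted singular integral of the mismatched terms could not be controlled. Since the rest is a routine transcription of the Bellman-case argument, omitting the details in the text is justified.
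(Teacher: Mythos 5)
Your proposal is correct and follows exactly the route the paper intends: the paper omits this proof precisely because it is a verbatim transcription of the proof of Lemma \ref{lemestnonlocb}, with $\bar u=\mu u$ replacing $u$ (which changes nothing since $\bar u$ is still bounded and $(\bar x,\bar y)$ is still a maximum point of the corresponding doubled functional), and your account reproduces all the key steps — the three-term splitting, the maximum-point inequality for $\mathcal{T}^{\xi'}[J_x\cap J_y]$, the $\delta_0$-cut and the application of Lemma \ref{claimlip} made legitimate by $d(x)=d(y)$ from Lemma \ref{lemdxdycoer}(ii). No gaps.
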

Then the rest of the proof easily follows as in Proposition \ref{lembell}, step $5$.  $\Box$
\end{step3}
\end{prooflbcwb}

\section{Applications to evolutive problems: existence, uniqueness and  asymptotic behavior}\label{appl}
In this section we present some applications  of our results proved in the previous sections for the stationary case to the evolutive setting and we consider the associated Cauchy problem 
\begin{equation}\label{solutions0t}
\left\{
 \begin{array}{ll}
 \partial_t u-\mathcal{I}[u(\cdot, t)](x)+H(x,t, u, Du)=0 \, \, &\mbox{in} \, \,  \Omega \times (0,+\infty),\\
                 \frac{\partial u}{\partial n}= 0 \, \, &\mbox{on} \, \, \partial \Omega \times (0,\infty),\\
                  u(x,0)=u_0(x) \, \, &\mbox{in} \, \, \bar{\Omega},
                 \end{array}
\right.\,
\end{equation}
where  $\Omega\subset \mathbb{R}^N$  satisfies assumption (O), $u_0\in C(\Omega)$, $\mathcal{I}[u]$ is an integro-partial differential operator of censored type and of order strictly less than $1$, defined as  
\begin{equation}\label{idot}
\mathcal{I}[u(\cdot, t)](x)=\lim_{\d \to 0^{+}} \int_{\footnotesize{\begin{matrix}|z|>\d, \\x+j(x,z) \in \bar{\Omega} \end{matrix}}} \, [u(x+j(x,z), t)-u(x,t)]d\mu_x(z),
\end{equation}
where  $\mu_x$ is a singular non-negative Radon measure satisfying (M), and  $j(x,z)$  is a jump function satisfying (J0), (J1) (see Section \ref{sec:ass}) .  The main example are measures $\mu_x$ with density  $\frac{d\mu_x}{dz}=g(x,z)|z|^{-(N+\sigma)}$ with $\sigma <1$  and $g$ a non-negative bounded function Lipschitz  in $x$ uniformly with respect to $z$. 
Note that the operator \eqref{idot} is  the natural extension to the evolutive case  of the nonlocal operator considered in the stationary case and defined in \eqref{idol}.
Moreover
$H\, : \, \bar{\Omega} \times [0,+\infty)\times \R\times \mathbb{R}^N \mapsto \mathbb{R}$ is a continuous function whose growth in the gradient  makes it the leading order term in the  equation, and can be coercive or of Bellman type. We refer to the following   section for the precise assumptions on the Hamiltonian. 
The well-posedness of problems as \eqref{solutions0t} follows from analogous arguments used for the stationary problem with some standard adaptations. We refer  to Theorem \ref{thmcontrolt} and Theorem \ref{ex} for further details and proofs of uniqueness and existence.

We study two different kind of asymptotic behaviour of the solutions of \eqref{solutions0t}.  First we consider a Hamiltonian  either of Bellman type either coercive  under assumptions ensuring uniqueness of the solution of the associated stationary problem. We prove, by classical methods based on the weak-relaxed semilimits, the  convergence as $t \to + \infty$ of the solution of \eqref{solutions0t} to the unique solution of the associated  stationary problem.
 On the other hand, when the associated stationary problem has not unique solution, we consider a Hamiltonian with superfractional coercive growth and  we study the so-called \textit{ergodic large time behaviour}, proving that the solution of \eqref{solutions0t} approaches  a solution of the so-called \textit{ergodic problem} as $t \to + \infty$. 
 We follow the methods of \cite{toppreg}, which  rely on the  H\"older regularity in $\Omega$ of the subsolutions of the associated ergodic problem and on a Strong Maximum Principle. We recall also  \cite{BCCI2}, where analogous methods were applied to uniformly elliptic integro-differential equations. We refer to subsection \ref{secondconvergence} for more details.

\subsection{Assumptions}
We are going to consider the finite time horizon problem associated to \eqref{solutions0t}
\begin{equation}\label{solutions0TT}
\left\{
 \begin{array}{ll}
 \partial_t u-\mathcal{I}[u(\cdot, t)](x)+H(x,t, u, Du)=0 \, \, &\mbox{in} \, \,\Omega \times (0,T],  \\
                 \frac{\partial u}{\partial n}= 0 \, \, &\mbox{on} \, \, \partial \Omega \times (0,T),\\
                  u(x,0)=u_0(t) \, \, &\mbox{in} \, \, \bar{\Omega},
                 \end{array}
\right.\,
\end{equation}
The definition of viscosity solutions to \eqref{solutions0TT} (and then to  \eqref{solutions0t}) is the natural extension of Definition \ref{defsol} to the corresponding Cauchy problem.

We  assume the following condition. 
\begin{enumerate}
\item[(H')] For all $R>0$, there exists $\gamma_R\geq -K$ such that
for all $x \in \bar{\Omega},  u, v \in \mathbb{R},|u|,|v|\leq R, 0\leq  t \leq R$ and $p\in \mathbb{R}^n$, we have
$$
H(x,t, u,p)-H(x,t, v,p)\geq \gamma_R(u-v).
$$
\end{enumerate}


\begin{oss}\label{coerciveprop}\rm{
Note that in the comparison principle (Theorem \ref{thmcontrolt}), assumption (H') is assumed mainly for simplicity of exposition and can be relaxed assuming only $\gamma_R \geq 1$.
  Indeed if $\gamma_R<1$ we perform the change $\tilde{u}=ue^{-(\gamma_R-1)t}$ (analogously for the supersolution) and prove Theorem \ref{thmcontrolt} for $\tilde{u}$ and $\tilde{v}$.}
\end{oss}

As it is classical in viscosity solutions theory, the comparison principle allows the application of Perron's method to conclude the existence. To this end, we introduce the following assumption, which will allows us to build sub and supersolutions:
\begin{enumerate}
\item[(E')] For all $T>0, R>0$  there exists a constant $H_R>0$ such that 
$$
||H(x,t, r, p)||_\infty \leq H_R \, \, \forall x \in \Omega, t \in [0,T], r, p \in \mathbb{R}, |r|, |p| \leq R.
$$
\end{enumerate}
We consider Hamiltonian either of Bellman type either coercive.

We say that the Hamiltonian $H$ is of Bellman type
if for $t \in [0,+\infty), x \in \bar{\Omega}, p \in \mathbb{R}^N, H(x,t, r,p)$ can be written as
\begin{equation}\label{bellman1}
H(x,t,r,p)=\sup_{\alpha \in \mathcal{A}} \{\lambda(x,t,\alpha) r-b(x,t, \alpha)\cdot p -l(x,t, \alpha)\},
\end{equation}
where $b, \lambda\, : \, \bar{\Omega} \times [0,+\infty)\times \mathcal{A} \to \mathbb{R}^N$ and $l\, : \, \bar{\Omega}\times[0,+\infty) \times \mathcal{A}\to \mathbb{R},$
are continuous and  bounded functions and satisfy the following properties.
\begin{enumerate}
\item[(C')] \label{lcont} \textit{Uniform  continuity  of $l$ and $\lambda$}:\\
There exist modulus of continuity  $\omega_l, \omega_\lambda$ such that such that $\forall \alpha \in \mathcal{A}, \forall x, y \in \bar{\Omega}, t,s\in [0,+\infty)$
$$
|l(x,t,\alpha)-l(y,s,\alpha)|\leq \omega_l(|x-y|+|t-s|);
$$
$$
|\lambda(x,t,\alpha)-\lambda(y,s,\alpha)|\leq \omega_\lambda(|x-y|+|t-s|);
$$
\item[(L')] \label{blip} \textit{Uniform Lipschitz continuity of the drift $b$}: There exists $C>0$ such that $\forall \alpha \in \mathcal{A} \, \forall (x,s), (y,t) \in \bar{\Omega}\times [0+\infty)$\\
$$
|b(x,s, \alpha)-b(y,t,\alpha)| \leq C(|x-y|+|s-t|).
$$
\end{enumerate}

\begin{oss}\rm{
Note that assumption (L') may seem unusual since it requires also the uniform Lipschitz continuity of the drift $b$ in the time variable. This is due to the fact that in the proof of the comparison principle (Theorem \ref{thmcontrolt}) we need to double also the time variable in the test function, which as a consequence will have the same dependence on $x$ and $t$, which we estimate by assumption (L'). We refer to Remark \ref{osscompt}. 
}
\end{oss}

We assume that the components of the boundary are of three different types whose definition is analogous to that of $\Gamma_{\mbox{\footnotesize{in}}}, \Gamma_{\mbox{\footnotesize{out}}}, \Gamma$ defined in \eqref{gammain}, \eqref{gammaout}, \eqref{gamma} in the stationary case, with the only difference that now they are part of the parabolic boundary. For simplicity of exposition, we do not repeat the definition. For the sake of simplicity and for the rest of the paper, we adopt the following notation.
\begin{itemize}
\item[(B')]
Assumption (B) where $\Gamma_{\mbox{\footnotesize{in}}}, \Gamma_{\mbox{\footnotesize{out}}}, \Gamma$ are considered as  parts of the parabolic boundary.
 \end{itemize}

In the case of coercive Hamiltonians, we restrict the time dependence of $H$ by the assumption:\\

\begin{enumerate}
\item[(H0')] There exists  $H_0 \, : \, \bar{\Omega}\times \mathbb{R} \times \mathbb{R}^N \to \mathbb{R}$ continuous  and $f\, : \, \bar{\Omega} \times [0,\infty) \to \mathbb{R}$ uniformly continuous  and  bounded such that for all $x \in \bar{\Omega}, r \in \mathbb{R}, p \in \mathbb{R}^N$
$$
H(x,t,r,p)=H_0(x,r,p)-f(x,t).
$$
\end{enumerate}

We assume that the Hamiltonian satisfies in $x$ uniformly in $t$ and uniformly on compact sets in $r$ the assumptions made in the stationary case, namely, (H1), (Ha) (sublinear coercivity) or (H1), (Hb), (Hc) (superlinear coercivity).    For the sake of brevity and simplicity of exposition, we omit to repeat them and for the rest of the paper we adopt the following notation.
\begin{itemize}
\item[(He')] The Hamiltonian satisfies (H1), (Ha) in $x$ uniformly in $t$ and uniformly on compact sets in $r$.  In this case we say  the Hamiltonian satisfies (H1'), (Ha').
\end{itemize}
\begin{itemize}
\item[(He'')] The Hamiltonian satisfies (H1), (Hb), (Hc) in $x$ uniformly in $t$ and uniformly on compact sets in $r$. In this case we say the Hamiltonian satisfies (H1''), (Hb''), (Hc'').
\end{itemize}

\subsection{Existence and uniqueness}\label{subext}
In the following theorem we state the comparison principle for the problem \eqref{solutions0t}.

\begin{thm}\label{thmcontrolt}
Let $\Omega$ be an open subset of $\mathbb{R}^N$ satisfying $(O)$, $u_0\in C(\bar{\Omega})$. Assume (M), (J0), (J1). Let $H$ be  a Hamiltonian  either of Bellman type as in \eqref{bellman1} satisfying (C'), (L'), (B') or a coercive Hamiltonian  satisfying (H0') and either (He') or (He''). Assume also (H').   Let $u, v \in L^{\infty}(\bar{\Omega}\times [0,T])$ for all $T>0$ be respectively a  bounded usc sub  and bounded lsc supersolution of \eqref{solutions0t}. 
Then 
$$
u \leq v \quad \mbox{ in } \bar{\Omega} \times [0,+\infty).
$$
\end{thm}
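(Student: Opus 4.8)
The plan is to transcribe the proof of Theorem \ref{thm} to the parabolic setting, the only genuinely new ingredient being the doubling of the time variable together with a penalization in $t$ that keeps the relevant maxima away from the final time. First I would reduce, via the exponential change of unknown $\tilde u=e^{-(\gamma_R-1)t}u$, $\tilde v=e^{-(\gamma_R-1)t}v$ as in Remark \ref{coerciveprop}, to the case $\gamma_R\geq 1$ in (H'); and it suffices to prove $u\leq v$ on $\bar\Omega\times[0,T]$ for every fixed $T>0$ and then let $T\to\infty$. Arguing by contradiction, suppose $M:=\sup_{\bar\Omega\times[0,T)}(u-v)>0$. Adding the penalization $\frac{\eta}{T-t}$ to $v$ and, if needed, localizing in space with $\psi(R^{-1}|x|)$ and the term $\nu d(x)$ exactly as in the stationary proof, one obtains a positive maximum of the penalized difference, attained at some $(x_0,t_0)$ with $t_0<T$ and $x_0\in\bar\Omega$, together with the usual control on the $t$-derivative of the penalization.

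Next I would establish the parabolic analogues of Propositions \ref{lembell} and \ref{lembellc}: the function $\omega=u-v$ (resp. $\omega_\mu=\mu u-v$, $\mu\in(0,1)$, when $H$ is superlinearly coercive) is, in the viscosity sense, a subsolution of
\[
\partial_t\omega-\mathcal{I}[\omega]-B|D\omega|\leq 0 \quad\Big(\text{resp. } \partial_t\omega-\mathcal{I}[\omega]-C_{m,\mu}|D\omega|^m\leq A(1-\mu)\Big)\ \text{ in } \Omega\times(0,T),
\]
with $\frac{\partial\omega}{\partial n}\leq 0$ on $\partial\Omega\times(0,T)$. The argument is the one of the stationary case carried out with the test function
\[
\tilde\phi(x,t,y,s)=\phi\big(\tfrac{x+y}{2},\tfrac{t+s}{2}\big)+\eps^{-1}\chi_\eps(|x-y|)+\eps^{-1}\chi_\eps(|t-s|)+K\eps^{-1}\chi_\delta(|d(x)-d(y)|),
\]
so that one still localizes on equidistant points $d(x)=d(y)$; this localization is proved exactly as in Lemma \ref{lemdxdy} (resp. Lemma \ref{lemdxdycoer}), the time-doubling terms being harmless since they contribute only $o_\eps(1)$ to the gradient and to the normal-derivative estimates, while the new $t$-dependence of the drift and of the cost/zeroth-order term is absorbed through (L') and (C') (in the coercive case (H0') makes this trivial, the $t$-dependence sitting only in the uniformly continuous $f$). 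Points of $\Gamma_{\mbox{\footnotesize{in}}}$ are handled, as in Remark \ref{proofa}, with the blow-up supersolution of Lemma \ref{lemblowup}; the nonlocal terms are estimated precisely as in Lemma \ref{lemestnonlocb}/Lemma \ref{lemestnonloc}, invoking Lemma \ref{claimlip} for the boundary-geometry term and repeatedly using $\sigma<1$.

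Finally, with the reduced inequality available, I would use $\phi(x,t)=\psi(R^{-1}|x|)-\nu d(x)+\frac{\eta}{T-t}$ as test function at $(x_0,t_0)$. Since $\partial_t\big(\frac{\eta}{T-t}\big)\geq 0$ and, for $\nu>R^{-1}||\psi'||_\infty$, one has $\frac{\partial\phi}{\partial n}>0$ on $\partial\Omega$, the boundary alternative is ruled out and the interior $F$-inequality holds; then Lemma \ref{primophi} gives $\mathcal{I}[\phi],|D\phi|\leq o_{\nu,R}(1)$, and combining with $\omega(x_0,t_0)=M+o_{R,\nu}(1)$ (resp. $M+o_{R,\nu,\mu}(1)$) and $\gamma_R\geq 1$ one obtains $M+o(1)\leq CA(1-\mu)+o(1)$, where the right-hand side is $o(1)$ in the Bellman and sublinear cases; this is a contradiction after letting $R\to\infty$, $\nu\to0$, $\eta\to0$ (and $\mu\to1$ in the superlinearly coercive case).

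The hard part is the equidistance localization step: one must verify that doubling the time variable neither destroys the strict sign $\frac{\partial\psi}{\partial n}>0$ on the boundary nor the superfractional/blow-up contradiction mechanism of Lemma \ref{lemdxdy}/Lemma \ref{lemdxdycoer}, and that (L') is precisely strong enough to control the $H$-differences along the doubled space-time variables (this is exactly why (L') asks for Lipschitz dependence of $b$ also in $t$). Granting this, everything else is a routine parabolic adaptation of the stationary argument.
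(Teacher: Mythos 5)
Your proposal is correct and follows essentially the same route as the paper: the paper itself reduces Theorem \ref{thmcontrolt} to the parabolic analogue of Propositions \ref{lembell} and \ref{lembellc} (its Proposition \ref{eqmut}), obtained by doubling the time variable in the test function exactly as you describe, with (L') and (C') (or (H0')) absorbing the $t$-dependence and the exponential change of Remark \ref{coerciveprop} handling $\gamma_R<1$. The only cosmetic slip is that your displayed reduced inequality omits the zeroth-order term $\gamma_R\omega$ coming from (H') (present in the paper's Proposition \ref{eqmut}), which you evidently intend to keep since you invoke $\gamma_R\geq 1$ in the final contradiction.
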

The proof is based on the following proposition, which is the analogous for the evolutive case to Propositions \ref{lembell} and  \ref{lembellc}. The proof is also similar with some adaptation to the evolutive setting.
Note that  Proposition \ref{eqmut} is used also later in the proof of Proposition \ref{strongmaxpr}.
\begin{prop}\label{eqmut}
Let $\mathcal{I}$ as in \eqref{idot}  and assume (M),(J0), (J1). Let $H$ be an Hamiltonian either in Bellman form as in \eqref{bellman1} satisfying (C'), (L'), (B') or a coercive Hamiltonian  satisfying (H0') and either (He') or (He''). Assume also (H'). Let $u,v \in L^{\infty}(\bar{\Omega}\times [0,T])$ for any $T>0$ be respectively  usc sub and lsc supersolutions to 
$$
\left\{
 \begin{array}{ll}
 \partial_t w-\mathcal{I}[w(\cdot, t)](x)+H(x,t, w, Dw)=0 \, \, &\mbox{in} \, \,\Omega \times (0,+\infty)  \\
                 \frac{\partial w}{\partial n}= 0 \, \, &\mbox{on} \, \, \partial \Omega \times (0,+\infty),\\
                 \end{array}
\right.\,
$$
Take $\mu=1$ if $H$ has Bellman form and if $H$ is sublinearly coercive, $\mu \in (0,1)$ if $H$ is superlinearly coercive. 
Then, the function
$$
\omega(x,t):=\mu u(x,t)-v(x,t)
$$
satisfies, in the viscosity sense, the equation
$$
\left\{
 \begin{array}{ll}
 \partial_t \omega+\gamma_R \omega -\mathcal{I}[\omega(\cdot, t)](x)- C_{m,\mu,R}|D\omega|^m\leq A_R(1-\mu) &\mbox{in} \, \,  \Omega \times (0,+\infty)\\
                 \frac{\partial \omega}{\partial n}= 0 \, \, &\mbox{on} \, \, \partial \Omega \times (0,+\infty),\\
                 \end{array}
\right.\,
$$
where, for $R=||\mu u||_\infty + ||v||_\infty$, $\gamma_R$ is the constant of (H'),  
$A_R>0$  is defined in $(He'')$-$(Hb'')$ and $C_{m,\mu,R}>0$ is such that $C_{m,\mu,R}=:C_{\mu,m}=(1-\mu)^{1-m}$ if $\mu \in (0,1)$ and $C_{m,\mu,R}=\tilde{C}_R$ if $\mu=1$ and $H$ is sublinearly coercive, where $\tilde{C}_R$ is defined in $(He')$-$(Ha')$.
\end{prop}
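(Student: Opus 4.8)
The plan is to transcribe the stationary arguments of Propositions \ref{lembell} and \ref{lembellc} to the parabolic setting, the only genuinely new ingredient being the doubling of the time variable. Fix a test function $\phi\in C^1(\mathbb{R}^N\times(0,+\infty))$ so that $\omega-\phi$ has a strict local maximum at some $(x_0,t_0)\in\bar\Omega\times(0,+\infty)$ over a set $\big(\bar B_{C_j\xi}(x_0)\cap\bar\Omega\big)\times[t_0-\xi,t_0+\xi]$, where $0<\xi<1$ and, without loss of generality, $t_0>\xi$. As in the stationary case the interior case $x_0\in\Omega$ is standard, so I focus on $x_0\in\partial\Omega$, splitting according to whether $x_0\in\Gamma_{\mbox{\footnotesize{out}}}$ or $x_0\in\Gamma$ (cases (b)--(c), treated simultaneously) or $x_0\in\Gamma_{\footnotesize{\mbox{in}}}$ (case (a), via the blow-up supersolution of Lemma \ref{lemblowup}), and assuming $\frac{\partial\phi}{\partial n}(x_0,t_0)>0$ I aim to show that the parabolic $F$-inequality holds for $\omega$ at $(x_0,t_0)$.

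The core step is the localization on equidistant points. I double both space and time by means of
$$\tilde\phi(x,t,y,s)=\phi\Big(\tfrac{x+y}{2},\tfrac{t+s}{2}\Big)+\eps^{-1}\chi_\eps(|x-y|)+\eps^{-1}\chi_\eps(|t-s|)+K\eps^{-1}\chi_\delta(|d(x)-d(y)|),$$
with $\chi_\eps$ as in \eqref{chieps0} and $K$ as in \eqref{K}, together with its non-differentiable companion $\psi$ obtained by replacing $\chi_\delta(|d(x)-d(y)|)$ with $|d(x)-d(y)|$. Let $(\bar x,\bar t,\bar y,\bar s)$ and $(x,t,y,s)$ be the respective maximum points of $\mu u-v-\tilde\phi$ and $\mu u-v-\psi$ over the product of $\big(\bar B_{2C_j\xi'}(x_0)\cap\bar\Omega\big)\times[t_0-\xi',t_0+\xi']$ with itself, $0<\xi'<\xi/2$. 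I then prove the parabolic analogue of Lemma \ref{lemdxdy} (resp. Lemma \ref{lemdxdycoer} in the coercive case): $(\bar x,\bar t,\bar y,\bar s)\to(x,t,y,s)$ with the corresponding convergence of $u,v$ as $\delta\to0$, and $d(x)=d(y)$. The equidistance is obtained by contradiction: if $d(x)\neq d(y)$, the Taylor expansion of the signed distance together with $\frac{\partial\phi}{\partial n}(x_0,t_0)>0$ and $K\ge0$ shows the Neumann condition cannot hold at the boundary point, so the parabolic $F$-inequalities hold with gradients of size $\eps^{-1}$; then the superfractional growth of $H$ — quantified by Lemma \ref{lemh} for Bellman Hamiltonians and by (H1), (He'')-(Hb'') for coercive ones — dominates the nonlocal contribution estimated through Lemma \ref{estnonloclem}, and one reaches $\eps^{-m}\big(c_0\mu^{1-m}-\eps^{m-\sigma}\big)\le C$ (or its Bellman counterpart), a contradiction for $\eps$ small since $\sigma<m$ (resp. $\sigma<1$). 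The time-doubling enters only through the matching time-derivative terms $\eps^{-1}\chi'_\eps(|t-s|)$, which cancel up to $\partial_t\phi$, and through the moduli/Lipschitz bounds in $t$ provided by (C'), (L'), (H0'), (He')-(He'').

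Once $d(x)=d(y)$ is secured, I write the parabolic viscosity inequalities for $u$ and $v$, subtract, and estimate term by term exactly as in Step $2$ of Propositions \ref{lembell} and \ref{lembellc}. The zeroth-order terms combine via (H'): $\mu H(\bar x,\bar t,u,\cdot)-H(\bar y,\bar s,v,\cdot)$ yields, with $R=\|\mu u\|_\infty+\|v\|_\infty$ and using the uniform continuity of $\lambda$ (resp. the absence of explicit $t$-dependence in $H_0$ and the uniform continuity of $f$), the term $\gamma_R\omega$ plus moduli vanishing in the limit. The gradient part of the Hamiltonian is treated as in the stationary proofs: Lemma \ref{lemh} in the Bellman case gives $B|D\phi|$; (He'')-(Hb''), (Hc'') in the superlinear coercive case give $-C_{m,\mu}|D\phi|^m+A_R(1-\mu)$ by the Legendre-type inequality, and (He')-(Ha') covers the sublinear case. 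The nonlocal terms split as $\mathcal{I}^{\xi'}[J_x/J_y]+\mathcal{I}^{\xi'}[J_y/J_x]+\mathcal{T}^{\xi'}[J_x\cap J_y]$ and are controlled precisely as in Lemma \ref{lemestnonlocb}, the $\mathcal{T}$-term by the maximum-point inequality and the difference-of-domains terms by Lemma \ref{claimlip} (where equidistance is used), with the $x$-dependence of $\mu$ handled as in Remark \ref{ossmux} and case (a) using Lemma \ref{lemblowup} to keep the maxima away from $\partial\Omega$. Sending $\delta\to0$ (Fatou), then $\xi'\to0$ (dominated convergence), then $\eps\to0$ (Fatou and the $C^1$ regularity of $\phi$) recovers the claimed viscosity inequality and boundary condition for $\omega$.

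I expect the main obstacle to be the bookkeeping around the time-doubling inside the localization lemma: one must verify that the extra term $\eps^{-1}\chi_\eps(|t-s|)$ leaves the sign of $\frac{\partial\psi}{\partial n}$ at boundary points untouched (it contributes nothing to the normal derivative) and, more delicately, that the joint Lipschitz dependence in (L') is exactly strong enough to absorb the cross terms $|b(\bar x,\bar t,\alpha)-b(\bar y,\bar s,\alpha)|\lesssim|\bar x-\bar y|+|\bar t-\bar s|$ against the gradient penalization, all of which become $o_\eps(1)$ — this is the reason (L') is stated with joint Lipschitz continuity, as remarked after its statement. The remaining arguments are a faithful, if lengthy, adaptation of the stationary ones.
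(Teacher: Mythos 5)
Your proposal is correct and coincides with the paper's intended argument: the paper itself omits the proof of Proposition \ref{eqmut}, stating (in Remark \ref{osscompt}) that it follows from Propositions \ref{lembell} and \ref{lembellc} by doubling the time variable in the test function — precisely because the asymmetric use of the viscosity inequalities rules out Ishii's lemma — with (L') supplying the joint Lipschitz continuity needed to absorb the resulting $|\bar t-\bar s|$ cross terms. You have identified exactly these points, and the remaining adaptations (equidistance localization, nonlocal estimates via Lemmas \ref{lemestnonlocb} and \ref{claimlip}, case (a) via the blow-up supersolution, the $\gamma_R\omega$ term from (H')) are carried out as in the stationary case.
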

\begin{oss}\label{osscompt}\rm{
 We omit the proof of Theorem \ref{thmcontrolt}, since it follows closely the arguments presented in Proposition \ref{lembell} and Proposition \ref{lembellc} for the stationary case adapted by standard arguments to the evolution setting. We only remark that, since  the strategy of the proof of Proposition \ref{lembell} relies on an asymmetric use of the viscosity inequalities satisfied by the sub- and supersolution (see in particular in Step $1$),  the standard approach by the Ishi's Lemma for evolutive equation is not applicable. Then, we need to double also the time variable in the test function, which as a consequence will have the same dependence on $x$ and $t$. 
Because of this, in particular in order to estimate the Hamiltonian terms in the viscosity inequalities satisfied by $u$ and $v$ (in the Bellman case), we need the uniform continuity also with respect to time as stated in assumption (L').
}
\end{oss}


For both the coercive and Bellman case, the application of Perron's method on a sequence of finite-time horizon problems with the form \eqref{solutions0TT}  with $T \to +\infty$ and the strong comparison principle allows us to get the existence of a solution which is defined for all time.
Note that, in order to apply Perron's method, we ask the initial datum to be bounded, i.e. $u_0\in C_b(\Omega)$. 

Moreover, in order to have the uniform boundedness in $T$ of the solutions of \eqref{solutions0t}, we suppose the following assumption.
\begin{enumerate}
\item[(H'')]  
There exists $\gamma_0>0$ such that $\gamma_R\geq \gamma_0 $ for all $R>0$ where $\gamma_R$ is defined in (H').
\end{enumerate}

\begin{thm}\label{ex}
Let $\Omega$ be an open subset of $\mathbb{R}^N$ satisfying $(O), u_0\in C_b(\mathbb{R}^N)$. Assume (M), (J0), (J1).  Let $H$ be  a Hamiltonian  either of Bellman type as in \eqref{bellman1} satisfying (C'), (L'), (B') or a coercive Hamiltonian  satisfying  (H0') and either (He') or (He''). Assume also (H'), (E').
 Then, there exists a unique bounded  viscosity solution to problem \eqref{solutions0t} in $C(\bar{\Omega}\times [0,+\infty))\cap L^{\infty}(\bar{\Omega} \times [0,T])$.
 In addition, if (H'') holds, then the unique solution $u \in C(\bar{\Omega}\times [0,+\infty))\cap L^{\infty}(\bar{\Omega} \times [0,T])$ for all $T>0$ is uniformly bounded in $\bar{\Omega} \times [0,+\infty)$.
\end{thm}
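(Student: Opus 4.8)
The plan is to deduce uniqueness immediately from the comparison principle, to obtain existence by Perron's method applied to the finite-horizon problems \eqref{solutions0TT} together with a gluing argument as $T\to+\infty$, and finally to read off the uniform-in-time bound from the sign condition (H''). Uniqueness is the easy part: if $u_1,u_2$ are two bounded solutions of \eqref{solutions0t} with the same initial datum $u_0$, then for every $T>0$ the pair $(u_1,u_2)$ is a bounded usc subsolution and a bounded lsc supersolution of the problem restricted to $\bar\Omega\times[0,T]$ (and symmetrically), so Theorem \ref{thmcontrolt} gives $u_1\le u_2$ and $u_2\le u_1$ there; letting $T\to+\infty$ yields $u_1=u_2$.

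For existence, fix $T>0$. The first step is to exhibit an ordered pair of global barriers for \eqref{solutions0TT}. Functions $w$ that are constant in $x$ satisfy $\mathcal I[w]\equiv0$, $Dw\equiv0$ and $\partial w/\partial n\equiv0$, so, using (E') to bound $H(\cdot,\cdot,r,0)$ over bounded $r$-ranges and (H') (with the universal lower bound $\gamma_R\ge -K$) to control the zeroth-order dependence, one checks that $w^{\pm}(x,t)=\pm C e^{\lambda t}$ are respectively a supersolution and a subsolution of \eqref{solutions0TT} for $C,\lambda>0$ large enough in terms of $\|u_0\|_\infty$, $\|H(\cdot,\cdot,0,0)\|_\infty$ and $K$ (in particular independent of $T$); moreover $w^-\le u_0\le w^+$ at $t=0$ and $w^-\le w^+$ on $\bar\Omega\times[0,T]$. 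Perron's method for integro-differential equations (see \cite{AT},\cite{BI},\cite{SAY},\cite{US}) then produces a function $u_T$ with $w^-\le u_T\le w^+$ whose usc envelope is a subsolution and whose lsc envelope is a supersolution of \eqref{solutions0TT}; Theorem \ref{thmcontrolt}, combined with the standard barrier argument at $\{t=0\}$ guaranteeing that $u_0$ is attained continuously, forces the two envelopes to agree, so that $u_T\in C(\bar\Omega\times[0,T])\cap L^\infty(\bar\Omega\times[0,T])$ is the unique bounded solution of \eqref{solutions0TT}. By the uniqueness just proved, for $0<T<T'$ the restriction of $u_{T'}$ to $\bar\Omega\times[0,T]$ coincides with $u_T$, hence the family $\{u_T\}_{T>0}$ glues to a single $u\in C(\bar\Omega\times[0,+\infty))$ which is bounded on each strip $\bar\Omega\times[0,T]$ and solves \eqref{solutions0t}; it is the unique such solution.

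For the uniform bound under (H'') we have $\gamma_R\ge\gamma_0>0$ for all $R$, and a stationary comparison function now works. Choose $M>0$ with $M\ge\|u_0\|_\infty$ and $\gamma_0 M\ge\|H(\cdot,\cdot,0,0)\|_\infty$ (finite by (E')). Applying (H') to the pairs $(M,0)$ and $(0,-M)$ of the $r$-variable gives $H(x,t,M,0)\ge H(x,t,0,0)+\gamma_0 M\ge0$ and $H(x,t,-M,0)\le H(x,t,0,0)-\gamma_0 M\le0$, so the constants $M$ and $-M$ are respectively a (time-independent) supersolution and subsolution of \eqref{solutions0t}, each trivially satisfying the Neumann condition. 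Comparing $u$ with them through Theorem \ref{thmcontrolt} on every $\bar\Omega\times[0,T]$ yields $-M\le u\le M$ on $\bar\Omega\times[0,+\infty)$, with $M$ independent of $T$.

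The bulk of the work is carried by Theorem \ref{thmcontrolt}, so the only genuinely delicate points in this proof are: (i) verifying that the barriers are viscosity sub/supersolutions in the sense of Definition \ref{defsol} adapted to the Cauchy problem, including the generalized Neumann condition, and that they enforce continuous attainment of a merely continuous initial datum $u_0$ at $t=0$; and (ii) noting that for general $\gamma_R\ge -K$ a constant barrier cannot be used (whence the exponential time factor), while the strict sign $\gamma_0>0$ of (H'') is precisely what upgrades this to a stationary, $T$-independent barrier and hence to the uniform-in-time estimate. Everything else — Perron's method and the gluing as $T\to+\infty$ — is routine.
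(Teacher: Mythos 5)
Your proposal is correct and follows essentially the same route the paper indicates (the paper only sketches this proof): Perron's method on the finite-horizon problems \eqref{solutions0TT}, the comparison principle of Theorem \ref{thmcontrolt} for uniqueness and to identify the Perron envelopes, gluing as $T\to+\infty$, and the strict sign $\gamma_0>0$ from (H'') to replace the exponential barriers by stationary constant ones and obtain the uniform-in-time bound. Your write-up is in fact more detailed than the paper's, and the barrier constructions you verify are the standard ones intended there.
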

\subsection{Large time behaviour I: convergence in the classical sense}
We address the question of the asymptotic behaviour  as $t\to \infty$ of the solution of \eqref{solutions0t}, under assumption (H''), which ensures existence and uniqueness  for the associated stationary problem. 
The main result is the following theorem.

\begin{thm}\label{primaconv}
Let $\Omega$ be an open subset of $\mathbb{R}^N$ satisfying $(O), u_0 \in C_b(\Omega)$. Assume (M), (J0), (J1). Let $H$ be  a Hamiltonian  either of Bellman type as in \eqref{bellman1} satisfying (C'), (L'), (B') or a coercive Hamiltonian  satisfying  (H0') and either (He') or (He'').
Assume (H''), (E'). Assume also that there exists a continuous function $\bar{H}\, : \, \bar{\Omega} \times \mathbb{R} \times \mathbb{R}^N \to \mathbb{R}$ satisfying
$$
H(\cdot, t,\cdot, \cdot) \to \bar{H} \quad \,\mbox{ locally uniformly in }  \bar{\Omega} \times \mathbb{R} \times \mathbb{R}^N,
$$
as $t \to \infty$.
Then, there exists a unique bounded viscosity solution $u$ for the following problem
\begin{equation}\label{stat}
\left\{
 \begin{array}{ll}
 -\mathcal{I}(u)+\bar{H}(x, u, Du)=0 \, \, &\mbox{in} \, \,  \Omega, \\
                 \frac{\partial u}{\partial n}= 0 \, \, &\mbox{on} \, \, \partial \Omega.\\
                 
                 \end{array}
\right.\,
\end{equation} 
 Moreover, the unique viscosity solution $u$ of \eqref{solutions0t} converges uniformly on compact sets in $\bar{\Omega}$ to $u_\infty$, the unique viscosity solution of the problem \eqref{stat}.
\end{thm}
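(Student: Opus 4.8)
\emph{Proof strategy.} The plan is to run the classical \emph{half-relaxed limits} argument, based on the comparison principle for the limiting stationary problem \eqref{stat}.

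First I would collect the a priori ingredients. By Theorem~\ref{ex} together with assumption (H''), the unique solution $u$ of \eqref{solutions0t} is bounded on $\bar\Omega\times[0,+\infty)$; write $M=\|u\|_{L^\infty(\bar\Omega\times[0,\infty))}$. Since $H(\cdot,t,\cdot,\cdot)\to\bar H$ locally uniformly as $t\to\infty$, the limit $\bar H$ inherits the structural hypotheses used in the comparison principle — it is again of Bellman type with the (locally uniform) limit coefficients, resp. of the form $H_0(x,r,p)-f_\infty(x)$ with $f(\cdot,t)\to f_\infty$ — and, by (H''), it is \emph{strictly} increasing in the $u$ variable. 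Consequently the comparison principle holds for \eqref{stat}: this follows from the proof of Theorem~\ref{thm} (equivalently of Theorem~\ref{thmcontrolt}) essentially verbatim, the strictly positive constant $\gamma_0$ of (H'') playing the role of the zeroth order coefficient of \eqref{solutions0}. In particular \eqref{stat} has at most one bounded viscosity solution.

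Next comes the core step. For $x\in\bar\Omega$ set
\[
\bar u(x)=\limsup_{y\to x,\, t\to+\infty}u(y,t),\qquad \underline u(x)=\liminf_{y\to x,\, t\to+\infty}u(y,t);
\]
these are finite (by $M$), respectively usc and lsc, and satisfy $\underline u\le\bar u$ on $\bar\Omega$. I would show that $\bar u$ is a viscosity subsolution and $\underline u$ a viscosity supersolution of \eqref{stat}. For $\bar u$: given $\phi\in C^1(\mathbb{R}^N)$ and a strict local maximum point $x_0$ of $\bar u-\phi$ in $\bar B_{C_j\xi}(x_0)\cap\bar\Omega$, choose $(y_k,t_k)$ with $y_k\to x_0$, $t_k\to+\infty$, $u(y_k,t_k)\to\bar u(x_0)$, and maximize $(x,t)\mapsto u(x,t)-\phi(x)-\eps^{-1}(t-t_k)^2$ over $(\bar B_{C_j\xi}(x_0)\cap\bar\Omega)\times[t_k/2,2t_k]$, obtaining a maximizer $(\hat x_k,\hat t_k)$. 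For $\eps$ fixed and $k$ large the penalization forces $\hat t_k$ into the interior with $|\hat t_k-t_k|=O(\sqrt\eps)$, and $\hat x_k\to x_0$, $u(\hat x_k,\hat t_k)\to\bar u(x_0)$. Writing the parabolic $F$-viscosity inequality for $u$ at $(\hat x_k,\hat t_k)$ with test function $\phi(x)+\eps^{-1}(t-t_k)^2$, the time-derivative term equals $2\eps^{-1}(\hat t_k-t_k)$ and is removed by letting first $k\to\infty$ and then $\eps\to0$; the spatial gradient tends to $D\phi(x_0)$; the nonlocal term $\mathcal{I}_\xi[\phi](\hat x_k)+\mathcal{I}^\xi[u(\cdot,\hat t_k)](\hat x_k)$ passes to $\mathcal{I}_\xi[\phi](x_0)+\mathcal{I}^\xi[\bar u](x_0)$ via (M), (J1), the uscness of $u$ and Fatou's lemma; and $H(\hat x_k,\hat t_k,u(\hat x_k,\hat t_k),\cdot)\to\bar H(x_0,\bar u(x_0),\cdot)$ because $\hat t_k\to\infty$. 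When $x_0\in\partial\Omega$ the same is carried out inside the $\min$ of Definition~\ref{defsol}, yielding the generalized Neumann inequality; the argument for $\underline u$ is symmetric. Then Theorem~\ref{thm} applied to \eqref{stat} gives $\bar u\le\underline u$ on $\bar\Omega$, and together with $\underline u\le\bar u$ this forces $\bar u=\underline u=:u_\infty$, a continuous function, hence simultaneously sub- and supersolution of \eqref{stat} — i.e. its unique bounded viscosity solution, whose existence is thereby also established. Finally, $\bar u=\underline u=u_\infty$ with $u_\infty$ continuous is precisely the statement that $u(\cdot,t)\to u_\infty$ uniformly on compact subsets of $\bar\Omega$ as $t\to+\infty$.

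\emph{Main obstacle.} The delicate point is the core step: converting the parabolic viscosity inequalities into stationary ones under the relaxed limit, while (i) correctly treating the generalized Neumann boundary condition and (ii) passing to the limit in the censored nonlocal operator, whose $x$-dependent domain of integration interacts with the geometry of $\partial\Omega$. It is exactly the restriction $\sigma<1$ for $\mathcal{I}$ — already used for the uniform bound in Theorem~\ref{ex} and in Theorem~\ref{thm} — that makes this passage go through. A secondary, routine verification is that $\bar H$ indeed inherits the structural hypotheses of Theorem~\ref{thm}, so that comparison is available for \eqref{stat}.
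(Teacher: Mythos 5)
Your proposal is correct and follows exactly the route the paper takes (and leaves essentially unproved): the classical half-relaxed semilimits method combined with the comparison principle for the limiting stationary problem, which the paper only sketches by reference to \cite{GT1}. One small precision: the bound $|\hat t_k - t_k| = O(\sqrt{\eps})$ by itself would not kill the time-derivative term $2\eps^{-1}(\hat t_k - t_k)$; what does is the standard refinement that, for fixed $\eps$, the strict maximality of $x_0$ forces $\eps^{-1}(\hat t_k - t_k)^2 \to 0$ as $k\to\infty$, so that term already vanishes in the limit $k\to\infty$ — with that reading, your stated order of limits is the right one.
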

For the existence and uniqueness for the problem \eqref{stat} we refer to Theorem \ref{thm} and Corollary \ref{corthm}. 
We omit the proof of Theorem \ref{primaconv} since it  is rather classical and follows the same arguments used in \cite{GT1}, where  the same kind of results have been given in the case of the Dirichlet problem for nonlocal equations (fractional Laplacian) with Hamiltonians both  coercive both of Bellman type.

\subsection{Large time behavior II: convergence to the ergodic problem}\label{secondconvergence}
In this subsection we prove large time behaviour for the problem
\begin{equation}\label{ergt}
\left\{
 \begin{array}{ll}
 \partial_t u(x)-\mathcal{I}[u(\cdot, t)](x)+H(x, Du)=0 \, \, &\mbox{in} \, \,  \Omega \times (0,+\infty),\\
                 \frac{\partial u}{\partial n}= 0 \, \, &\mbox{on} \, \, \partial \Omega \times (0,\infty),\\
                  u(x,0)=u_0(x) \, \, &\mbox{in} \, \, \bar{\Omega}.
                 \end{array}
\right.\,
\end{equation}
 where $\Omega$ is a bounded open subset of $\mathbb{R}^N$ satisfying (O), $u_0 \in C(\Omega)$ and $H$ is a Hamiltonian in superfractional coercive form, that is, $m>1$ in (H1').  
 
Existence  and uniqueness for the problem \eqref{ergt} follow from  Theorem \ref{ex}. Note that $H$ does not depend on $u$, so it does not satisfy (H').

The main result of this section is Theorem \ref{lartime}, namely the  convergence as $t \to + \infty$ of the solution of \eqref{ergt} to a solution of the ergodic problem, which we solve in Proposition \ref{solerg}. The proof of Theorem \ref{lartime} strongly relies on the H\"older regularity up to the boundary for subsolutions of \eqref{ergt} and on the control of  their oscillation. This result was proved in the stationary setting by Barles, Ley, Koike, Topp in \cite{toppreg}, Corollary $2.14$ (see also Barles and Topp  \cite{BTregcens}),  in the case of censored operators and coercive Hamiltonians with $m > 1$. We recall this result in  Proposition \ref{regtopp}. For the evolutive problem we refer to the proof of Theorem $4.5$ of \cite{toppreg}, where first Lipschitz regularity in time is proved and then Corollary $2.14$ is used to conclude the regularity in time and space.

  We  remark that, differently from \cite{BS}, where the Lipschitz regularity of the solutions is used to linearise the equations in order to apply the Strong Maximum Principle, our proof relies mainly on the  use of  a Strong Maximum Principle \`a la Coville \cite{C1}, \cite{C2} (see also Ciomaga \cite{CIO}). This means that it relies mainly  on a topological property of  the support of the
measure defining the nonlocal operator. Note that in  this final part we assume $\Omega$ bounded for technical reasons related to the proof of the Strong maximum principle, we refer to the proof of Proposition \ref{strongmaxpr}.

\subsubsection{A strong maximum principle}
We need some notation for the statement
of the Strong Maximum Principle. Let $\mu, j$ be as in the definition of the nonlocal operator  $\mathcal{I}$, that is, satisfying (M), (J0), (J1) and denote by $\mbox{supp}\mu$ the support of the measure $\mu$. 
For $x \in \mathbb{R}^n$ we define inductively
$$
X_0(x) = \{x\};\quad X_{r+1}(x)= \cup_{\xi \in X_r(x)}\{\xi + j(\xi,\mbox{supp}\{\mu_x\})\}\cap \bar{\Omega}, \quad \mbox{for } r \in \mathbb{N},
$$
and 
$$
\mathcal{X}(x)=\overline{\cup_{r\in \mathbb{N}}X_r}.
$$
The Strong Maximum Principle presented in this paragraph relies in the non locality of the operator under the "iterative covering property"
\begin{equation}\label{itcovprop}
\mathcal{X}(x)=\Omega, \quad \mbox{for all } x \in \Omega.
\end{equation}
The most basic example is the case where $j(x,z)=z$ and there exists $r>0$ such that
$
B_r\subset \mbox{supp}\{\mu\}.
$
For further details and examples we refer to \cite{toppreg}.

The following proposition states the  Strong Maximum Principle.

\begin{prop}\label{strongmaxpr}
Let $\Omega \subset \mathbb{R}^N$ be a bounded domain satisfying (O). Let $H$ be a coercive Hamiltonian  satisfying (H0') and (He'')   with $m>1$. Assume (M), (J0), (J1), (E') and \eqref{itcovprop}.  Let $u, v$ be respectively a bounded sub and  supersolution of \eqref{ergt}, such that there exists $(x_0,t_0) \in \bar{\Omega} \times (0,+\infty)$ satisfying
$$
(u-v)(x_0,t_0)=\sup_{\bar{\Omega} \times (0,+\infty)}\{u-v\}.
$$ 
Then, the function $u-v$ is constant in $\bar{\Omega} \times [0,t_0]$. 
Moreover we have
$$
(u-v)(x,t)=\sup_{x \in \bar{\Omega}}\{u(x,0)-v(x,0)\}, \mbox{ for all } (x,t) \in \bar{\Omega}\times [0,t_0].
$$
\end{prop}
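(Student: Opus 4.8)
The strategy is the Coville-type argument for strong maximum principles of nonlocal operators, adapted to the parabolic Neumann problem: at the time $t_0$ where the maximum of $u-v$ is attained, one shows that the contact set is invariant under the jumps of $\mu$, so that the iterative covering property \eqref{itcovprop} forces it to be all of $\bar\Omega$; one then propagates this backward in time. First I would reduce to a single inequality for $w:=u-v$. Since the Hamiltonian in \eqref{ergt} does not depend on $u$ (nor on $t$), Proposition~\ref{eqmut} — in the superlinear case $m>1$ applied with $\mu\in(0,1)$ and then letting $\mu\uparrow1$ as at the end of the proof of Theorem~\ref{thm}, or else a direct doubling-of-variables argument as in the proof of Proposition~\ref{lembellc} — shows that $w$ is a bounded usc viscosity subsolution of
\begin{equation*}
\partial_t w-\mathcal{I}[w](x)-C|Dw|^m\le0\ \ \text{in }\Omega\times(0,\infty),\qquad \frac{\partial w}{\partial n}\le0\ \ \text{on }\partial\Omega\times(0,\infty),
\end{equation*}
for some $C>0$ (only the behaviour of this inequality at points where $Dw=0$ will be used, so the precise value of $C$ is irrelevant). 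Put $M:=\sup_{\bar\Omega\times(0,\infty)}w=w(x_0,t_0)$; thus $w\le M$, and for each $t$ the slice-contact set $\Sigma_t:=\{x\in\bar\Omega:\,w(x,t)=M\}$ is closed by upper semicontinuity.

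The core of the proof is the \emph{spatial propagation at time $t_0$}: if $\bar x\in\Sigma_{t_0}\cap\Omega$, then $\bar x+j(\bar x,z)\in\Sigma_{t_0}$ for $\mu_{\bar x}$-a.e.\ $z$, hence (by closedness of $\Sigma_{t_0}$) $X_1(\bar x)\subset\Sigma_{t_0}$. Indeed $(\bar x,t_0)$ is a global maximum of $w$, in particular a maximum over $\bar B_{C_j\xi}(\bar x)\cap\bar\Omega$ for every $\xi>0$; testing with the constant function $\phi\equiv M$, for which $\partial_t\phi=0$, $D\phi=0$ and $\mathcal{I}_\xi[\phi]=0$, the subsolution inequality of the reduced equation reduces to $-\mathcal{I}^\xi[w](\bar x,t_0)\le0$. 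On the other hand
\begin{equation*}
\mathcal{I}^\xi[w](\bar x,t_0)=\int_{|z|\ge\xi,\ \bar x+j(\bar x,z)\in\bar\Omega}\big(w(\bar x+j(\bar x,z),t_0)-M\big)\,d\mu_{\bar x}(z)\le0,
\end{equation*}
since $M$ is the maximum of $w$; therefore this integral vanishes and its nonpositive integrand is $\mu_{\bar x}$-a.e.\ zero, and letting $\xi\downarrow0$ gives the claim. Iterating, $\bigcup_r X_r(\bar x)\subset\Sigma_{t_0}$, so by \eqref{itcovprop} and the closedness of $\Sigma_{t_0}$ one concludes $\Sigma_{t_0}=\bar\Omega$, i.e.\ $w(\cdot,t_0)\equiv M$ on $\bar\Omega$ — provided $\Sigma_{t_0}$ contains some interior point.

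Producing such an interior maximum when $x_0\in\partial\Omega$ is, I expect, the main obstacle: with the constant test function one has $\partial\phi/\partial n=0$, so the generalized Neumann condition is trivially verified and no information is obtained at $x_0$. To handle this I would perturb $w$ by a multiple $\nu d(x)$ of the signed distance (equivalently, add a term $-\nu d$ to the test function), whose normal derivative at $\partial\Omega$ is strictly positive, and argue as in the proof of Theorem~\ref{thm}, where this distance-function penalization is used to force the equation up to the boundary; this reduces matters to an interior maximum at time $t_0$, after which the previous paragraph applies. For the backward-in-time propagation, set $t_*:=\inf\{t\in(0,t_0]:\,w\equiv M\text{ on }\bar\Omega\times[t,t_0]\}$; upper semicontinuity together with $w\le M$ gives $w(\cdot,t_*)\equiv M$, and if $t_*>0$ one runs the spatial argument above at times slightly below $t_*$ — using that \eqref{ergt} is autonomous, so that it may be applied to the time-translates $w(\cdot,\cdot-h)$ — to contradict the minimality of $t_*$; this is the step carried out in \cite{toppreg} for the corresponding large-time problem, and is where the time derivative in the equation genuinely enters. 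Hence $t_*=0$, so $w\equiv M$ on $\bar\Omega\times(0,t_0]$; a final use of upper semicontinuity and $w\le M$ then gives $w(\cdot,0)\equiv M$, so that $M=\sup_{x\in\bar\Omega}\{u(x,0)-v(x,0)\}$, which yields the last identity of the statement.
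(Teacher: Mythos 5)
Your overall strategy --- vanishing of the nonlocal integral at a contact point, iteration via \eqref{itcovprop}, and backward propagation in time --- is indeed the strategy of the paper's proof. But the reduction on which everything rests has a genuine gap. For $m>1$, Proposition \ref{eqmut} only yields a subsolution inequality for $\omega_\mu=\mu u-v$ with $\mu\in(0,1)$, and the constant in front of the gradient term is $C_{m,\mu}=(1-\mu)^{1-m}$, which blows up as $\mu\uparrow 1$. You therefore cannot ``let $\mu\uparrow 1$'' to conclude that $w=u-v$ is a viscosity subsolution of $\partial_t w-\mathcal{I}[w]-C|Dw|^m\le 0$ for some fixed $C$; no such inequality for $w$ itself is available, and your subsequent testing of $w$ with the constant function $\phi\equiv M$ is not licensed. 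The remark that ``only points where $D\phi=0$ matter'' does not repair this: the viscosity inequality must be written for $\omega_\mu$ at maximum points of $\omega_\mu$ minus the test function, and those points move with $\mu$. This is precisely why the paper keeps $\mu<1$ throughout, localizes with the penalization $(1-\mu)\psi_{\eps,\alpha}$ --- so that the gradient contribution becomes $C_{m,\mu}\bigl((1-\mu)|D\psi_{\eps,\alpha}|\bigr)^m=(1-\mu)|D\psi_{\eps,\alpha}|^m\to 0$ --- and only then sends $\mu\to 1$ followed by $\eps\to 0$ to recover the vanishing of $\mathcal{I}^\xi[(u-v)(\cdot,\tau)]$ at the contact point. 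Reproducing this double limit is the technical heart of the proof and is missing from your argument.

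Two further points. First, your backward-in-time step is circular as sketched: to ``run the spatial argument at times slightly below $t_*$'' you need a point of those time-slices where the global maximum $M$ is attained, which is exactly what is to be proved there. The paper obtains this from Lemma \ref{k} (a consequence of the comparison principle): $k(t)=\max_{\bar{\Omega}}(u-v)(\cdot,t)$ is nonincreasing, and since it attains its global maximum at $t_0>0$ it is constant on $[0,t_0]$; the spatial argument is then run at each fixed $\tau\in(0,t_0)$ independently, with the time penalization $\eta(t-\tau)^2$. Second, at the boundary the perturbation $-\nu d$ alone does not force the equation: the normal derivative of the localization term $|x-x_\tau|^2/\eps^2$ at a boundary maximum point $x_\mu$ can be negative of order $\|D^2d\|_\infty|x_\mu-x_\tau|^2/\eps^2$, which is not controlled by a small $\nu$. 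The paper's localization $e^{-Kd(x)}|x-x_\tau|^2/\eps^2-\alpha d(x)$ with $K>\|D^2d\|_\infty+1$ is designed exactly so that the positive contribution of the exponential factor dominates this term.
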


The proof of  Proposition \ref{strongmaxpr} uses the following lemma, which is 
 a consequence of the comparison principle, see \cite{BS}, Theorem $4.1$.

\begin{lem}\label{k}
Let assumptions of Proposition \ref{strongmaxpr} hold.  Let $u,v$ be  bounded sub and supersolution to equation \eqref{ergt} and for $t \in [0,+\infty)$ define
$$
k(t)=\max_{\bar{\Omega}} \{u(x,t)-v(x,t)\}.
$$
Then, for all $0\leq s \leq t$, we have $k(t) \leq k(s)$.
\end{lem}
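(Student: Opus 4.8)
The plan is to deduce the monotonicity of $k$ from the comparison principle of Theorem \ref{thmcontrolt}, exploiting the fact that both the equation and the Neumann boundary condition in \eqref{ergt} are invariant under the addition of constants: indeed $H=H(x,Du)$ does not depend on $u$, and neither $\partial_t$ nor $\mathcal{I}$ sees constants.

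First I would check that $k$ is well defined. Since $\Omega$ is bounded in this subsection, $\bar{\Omega}$ is compact, and $u(\cdot,t)-v(\cdot,t)$, being bounded and usc, attains its maximum on $\bar{\Omega}$; hence $k(t)\in\mathbb{R}$ for every $t\ge 0$. Now fix $0\le s\le t$ and set $w(x,\tau):=v(x,\tau)+k(s)$. Because the equation in \eqref{ergt} has no zeroth-order term in $u$ and is invariant under constant shifts, $w$ is again a bounded lsc supersolution of \eqref{ergt}, boundary condition included: if $w-\phi$ has a local minimum at a boundary point then so does $v-(\phi-k(s))$, while $D\phi$, $\partial\phi/\partial n$ and $\mathcal{I}_\xi[\phi]$ are unchanged and $\mathcal{I}^\xi[w]=\mathcal{I}^\xi[v]$. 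Moreover, by the very definition of $k(s)$, one has $u(x,s)\le v(x,s)+k(s)=w(x,s)$ for every $x\in\bar{\Omega}$.

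Next I would apply the comparison principle on the time interval $[s,+\infty)$. Since problem \eqref{ergt} is autonomous (neither $\mathcal{I}$ nor $H$ depends explicitly on $t$), the time-translated functions $\tilde u(x,\tau):=u(x,\tau+s)$ and $\tilde w(x,\tau):=w(x,\tau+s)$ are, respectively, a bounded usc subsolution and a bounded lsc supersolution of the Cauchy problem \eqref{solutions0t} associated with the coercive Hamiltonian $H$ (which satisfies (H0') and (He'') with $m>1$), and they satisfy $\tilde u(\cdot,0)=u(\cdot,s)\le w(\cdot,s)=\tilde w(\cdot,0)$ on $\bar{\Omega}$. The structural assumption (H') needed in Theorem \ref{thmcontrolt} is satisfied with $\gamma_R=0$, the Hamiltonian being independent of $u$. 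Hence Theorem \ref{thmcontrolt} yields $\tilde u\le\tilde w$ on $\bar{\Omega}\times[0,+\infty)$, that is $u(x,\tau)\le v(x,\tau)+k(s)$ for all $x\in\bar{\Omega}$ and $\tau\ge s$. Evaluating at $\tau=t$ and taking the maximum over $x\in\bar{\Omega}$ gives $k(t)\le k(s)$.

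I do not expect any genuine obstacle in this argument: the only points that need (routine) care are the verification that the constant translation preserves the viscosity supersolution property together with the generalized Neumann condition, and the legitimacy of invoking Theorem \ref{thmcontrolt} after the time-translation reducing the initial time to $0$ — in particular the (harmless) observation that assumption (H') holds with $\gamma_R=0$ for the $u$-independent Hamiltonian of \eqref{ergt}.
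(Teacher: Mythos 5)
Your argument is correct and is exactly the route the paper intends: the paper gives no proof of Lemma \ref{k}, merely noting it is a consequence of the comparison principle (citing Barles--Souganidis), and your constant-shift plus time-translation deduction from Theorem \ref{thmcontrolt} is precisely that argument. The only point to flag is that, per Remark \ref{coerciveprop}, the comparison proof is written for $\gamma_R\geq 1$, so for the $u$-independent Hamiltonian of \eqref{ergt} one formally passes through the exponential change of unknown indicated there before invoking the theorem; this is routine and does not affect your conclusion.
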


We prove the strong maximum principle. We essentially follow the argument of \cite{toppreg}, Proposition $4.1$,  with some changes due to presence of the Neumann boundary condition. We give a sketch of the proof, focusing  on the main differences.

\begin{proofsmpwb}[Proof of Proposition \ref{strongmaxpr}]\rm{
We divide the proof into several steps.
\begin{step1}-\textit{Preliminaries.}
\upshape
We want to prove that $
(u-v)(x,t)=k(0)
$ for each $(x,t) \in \bar{\Omega} \times [0,t_0]$.
Since $k(t_0)$ is a global maximum value of $k$ in $[0,+\infty)$, by Lemma \ref{k} we have $k(t)=k(0)$ for all $t \in [0,t_0]$. Then, we have just to prove that  
$$
u(x,\tau)-v(x,\tau)=k(\tau), \, \, \forall x \in \bar{\Omega}.
$$
for each $\tau \in (0,t_0)$. By upper-semicontinuity, we derive the result up to $\tau=0$ and $\tau=t_0$.
Fix $\tau \in (0,t_0)$ and define the set
\begin{equation}\label{btau}
\mathcal{B}_\tau=\{x \in \bar{\Omega} \, : \, (u-v)(x,\tau)=k(\tau)\}.
\end{equation}
We observe that by the upper-semicontinuity of $u-v$, $\mathcal{B}_\tau$ is non-empty. Then, the claim of the proposition  follows once proved that $\mathcal{B}_\tau=\bar{\Omega}$.
\end{step1}
\begin{step2}-\textit{Localization on time $\tau$.}
\upshape
For $\eta>0$, define the function
$$
(x,t)\to \Phi(x,t):=u(x,t)-v(x,t)-\eta(t-\tau)^2
$$
and note that for each $(x,t) \in \bar{\Omega} \times (0,+\infty)$  and for $\tilde{x} \in \mathcal{B}_\tau$ where $\mathcal{B}_\tau$ is defined in \eqref{btau}, we have
$
\Phi(x,t) \leq k(t)-\eta(t-\tau)^2\leq k(\tau)=(u-v)(\tilde{x},\tau)=\bar{W}(\tilde{x},\tau).
$
Then the supremum of $\Phi$ in $\bar{\Omega}\times(0,+\infty) $ is achieved and 
$$
\sup_{(x,t)\in \Omega \times (0,+\infty)}\Phi(x,t)=k(\tau).
$$
\end{step2}

\begin{step3}-\textit{Localization around a point in $\mathcal{B}_\tau$.}
\upshape
From now on we fix 
$
x_\tau \in \mathcal{B}_\tau,
$
where $\mathcal{B}_\tau$ is defined in \eqref{btau}.
We define for $\eps, \alpha>0$
$$
\psi_{\eps,\alpha}(x)=e^{-Kd(x)}\frac{|x-x_\tau|^2}{\eps^2}-\alpha d(x),
$$
where  $d$ is the signed distance from the boundary (see Remark \ref{distance}) and $K>0$  satisfies
$
K>||D^2d||_\infty+1.
$
Note that 
$
\psi_{\eps,\alpha}(x_\tau)=-\alpha d(x_\tau).
$
Moreover for each $\eps>0$ the first derivatives of $\psi_{\eps,\alpha}$ are bounded, depending on $\eps$ and $\alpha$. 

For $0<\mu<1$ we denote 
$
\omega_\mu=\mu u-v
$
and we consider
$$
(x,t) \to \Phi_\mu(x,t):=\omega_\mu(x,t)-\eta|t-\tau|^2 -(1-\mu)\psi_{\eps,\alpha}(x).
$$
By the upper-semicontinuity of $\Phi_\mu$, there exists $(x_\mu, t_\mu) \in \bar{\Omega}\times [0,t_0+1]$ such that
$$
\Phi_\mu(t_\mu,x_\mu)=\max_{\bar{\Omega}\times[0,t_0+1]} \Phi_\mu.
$$
Since $\Phi_\mu \to \Phi$ locally uniformly on $\bar{\Omega}\times [0,+\infty)$  as $\mu \to 1$, we get up to subsequences 
$$
(x_\mu,t_\mu)\to (\bar{x},\tau) \quad \mbox{ as } \mu \to 1.
$$
Not also that  for any $\alpha $ bounded 
\begin{equation}\label{barxeps}
\bar{x}=\bar{x}_{\eps} \to x_\tau \quad \mbox{ as } \eps \to 0.
\end{equation}
Indeed, by using the maximum point inequality for $\Phi_\mu$, we have
\begin{eqnarray}\label{maxw}
\Phi_\mu(x_\mu,t_\mu)\nonumber &=&(u-v)(x_\mu,t_\mu)+(\mu-1)(u+\psi_{\eps,\alpha})(x_\mu,t_\mu)-\eta|t_\mu-\tau|^2\\&\geq& k(\tau)+(\mu-1)u(x_\tau,\tau)-\alpha (\mu-1)d(x_\tau),
\end{eqnarray}
where we used the definition of $k(\tau)$.
Since $t_\mu \in [0,t_0+1]$ for all $\mu$ close to $1$, we have
$
(u-v)(x_\mu,t_\mu)\leq k(t_\mu)\leq k(\tau).
$
Coupling the previous inequality with \eqref{maxw} we get
$
\psi_{\eps,\alpha}(x_\mu)+\alpha d(x_\tau)\leq u(x_\tau,\tau)-u(x_\mu,t_\mu).
$
Then,  by the boundedness of $u$ and of $d$, for $\alpha$ small, we deduce that
$
|x_\mu-x_\tau|\leq C\eps
$
for some $C>0$  independent on $\mu$, which implies \eqref{barxeps}.
\end{step3}

\begin{step4}-\textit{Writing the viscosity inequality for $\omega_\mu$.}
\upshape
Denote $$
\phi(x,t):=(1-\mu)\psi_{\eps,\alpha}(x)+\eta(t-\tau)^2.
$$
We test $\omega_\mu$  with the function $\phi$ in $(x_\mu,t_\mu)$. We suppose $x_\mu \in \partial \Omega$, since the other case being analogous and even simpler. By the Taylor's formula for the distance function, we have
$$
n(x_\mu)(x_\mu-x_\tau)+\frac{1}{2}(x_\mu-x_\tau)^TD^2d(x_\mu)(x_\mu-x_\tau)+ o(|x_\mu-x_\tau|^2)=d(x_\tau)\geq 0
$$
and then
\begin{equation}\label{neuref}
n(x_\mu)(x_\mu-x_\tau)\geq-||D^2d||_\infty|x_\mu-x_\tau|^2/2+o(|x_\mu-x_\tau|^2).
\end{equation}
Take  $\mu, \eps$ small enough so that
\begin{equation}\label{epssmall}
1+\frac{o(|x_\mu-x_\tau|^2)}{|x_\mu-x_\tau|^2}\geq 0.
\end{equation}
By \eqref{neuref}, by the definition  of $K$ and \eqref{epssmall},  we have  
\begin{eqnarray*}
\frac{\partial \phi_{\eps,\alpha}}{\partial n}(x_\mu,t)&=&\frac{\partial \psi_{\eps,\alpha}}{\partial n}(x_\mu) \\&\geq&e^{-Kd(x_\mu)}\frac{|x_\mu-x_\tau|^2}{\eps^2}\left[K-||D^2d||_\infty+\frac{o(|x_\mu-x_\tau|^2)}{|x_\mu-x_\tau|^2}\right]+\alpha\\ &\geq&e^{-Kd(x_\mu)}\frac{|x_\mu-x_\tau|^2}{\eps^2}\left[1+\frac{o(|x_\mu-x_\tau|^2)}{|x_\mu-x_\tau|^2}\right]+\alpha>0.
\end{eqnarray*}
Then, the rest of the proof follows exactly as in \cite{toppreg}, Proposition $4.1$, by writing the viscosity inequality for $\omega_\mu$, letting first $\mu \to 1$, then $\eps \to 0$ and using the iterative covering property \eqref{itcovprop}. We give some details for completeness of exposition.
By Proposition \ref{eqmut}, we get for  $\xi >0$
\begin{multline*}
2\eta(t_\mu -\tau) -\mathcal{I}^\xi[\omega_\mu(\cdot, t_\mu)](x_\mu)-\mathcal{I}_\xi[(1-\mu)\psi_{\eps,\alpha}(\cdot)](x_\mu)\\-(1-\mu)C_{\mu,m}|D\psi_{\eps,\alpha}(x_\mu)|^m)\leq A_R (1-\mu),
\end{multline*}
where $C_{\mu,m}$ and $A_R$ are defined in Proposition \ref{eqmut}.

Since 
$$
\mathcal{I}_\xi[(1-\mu)\psi_{\eps,\alpha}(\cdot)](x_\mu)\leq (1-\mu) C'||D\psi_{\eps,\alpha}||_\infty
$$
for some $C'>0$, we get
\begin{multline}\label{par}
2\eta(t_\mu -\tau) -\mathcal{I}^\xi[\omega_\mu(\cdot, t_\mu)](x_\mu)\\-(1-\mu)\left(C'||D\psi_{\eps,\alpha}||_\infty+ (1-\mu)C_{\mu,m}|D\psi_{\eps,\alpha}(x_\mu)|^m)+A_R\right)\leq 0.
\end{multline}
We recall that $t_\mu \to \tau$ as $\mu \to 1$ and we observe that by the smoothness of $\psi_{\eps,\alpha}$ the term in parenthesis in \eqref{par} remain bounded as $\mu \to 1$. Moreover, by the Dominated Convergence Theorem, we get
$$
\mathcal{I}^\xi[\omega_\mu(\cdot, t_\mu)](x_\mu) \to \mathcal{I}^\xi[(u-v)(\cdot, \tau)](\bar{x}) \mbox{ as } \mu \to 1
$$
where  we recall that $\bar{x}$ is  the limit of $x_\mu$ as $\mu \to 1$.\\
Then
$$
\int_{\footnotesize{\begin{matrix}\bar{x}+j(\bar{x},z) \in \bar{\Omega},\\ |z|\geq \xi\end{matrix}}}(u-v)(\bar{x}+j(\bar{x},z), \tau)-(u-v)(\bar{x}, \tau)d\mu_{\bar{x}}(z)=0
$$
and letting $\eps \to 0$ and recalling that $\bar{x} \to x_\tau$ as $\eps \to 0$ and $(u-v)(x_\tau,\tau)=k(\tau)$ we finally conclude
$$
\int_{\footnotesize{\begin{matrix} x_\tau +j(x_\tau,z, \tau) \in \bar{\Omega}\\ |z|\geq \xi\end{matrix}}}(u-v)(x_\tau+j(x_\tau,z))-k(\tau)d\mu_{x_\tau}(z)=0.
$$
Since $\xi >0$ is arbitrary, we get that 
$$
(u-v)(x, \tau)-k(\tau)=0 \quad \mbox{for all } x\in X_1(x_\tau).
$$
 Therefore we can proceed in the same way as above, and conclude
by induction that 
$$
(u-v)(x, \tau)-k(\tau)=0 \quad \mbox{for all } x\in \cup_{r\in\mathbb{N}} X_r(x_\tau).
$$
 Then we conclude the proof by the upper-semicontinuity of $u-v$ and applying the iterative convering property \eqref{itcovprop}.$\Box$
\end{step4}}

\end{proofsmpwb}

\subsubsection{The ergodic problem}
Roughly speaking, solving the \textit{ergodic problem} means pass to the limit as $\d \to 0$ in the stationary  problem
\begin{equation}\label{erg}
\left\{
 \begin{array}{ll}
 \d u(x)-\mathcal{I}[u(\cdot)](x)+H(x, Du)=0 \, \, &\mbox{in} \, \,  \Omega, \\
                 \frac{\partial u}{\partial n}= 0 \, \, &\mbox{on} \, \, \partial \Omega,\\
              
                 \end{array}
\right.\,
\end{equation}
whose existence and uniqueness for $\d >0$ holds by Theorem \ref{thm}. 

We solve the ergodic problem in Proposition \ref{solerg}. Note that we need the compactness of the family of solutions $\{u_\d\}$, which relies mainly on the   regularity result for subsolutions of \eqref{erg} which we recall in the following proposition. We remark that the H\"{o}lder regularity for subsolution of the first equation of \eqref{erg} up to the boundary was proved in \cite{toppreg}, Theorem $5.5$. Note that here the Neumann boundary conditions play no role, since the regularity inside the domain follows by \cite{toppreg}, Theorem $5$, and the extension  up to the boundary  is carried out similarly to \cite{toppreg}, applying the method used by Barles in \cite{Bshpr} (see also \cite{CDLP}).

\begin{prop}\label{regtopp}
Let assumptions of Proposition \ref{strongmaxpr} hold. Then any bounded viscosity subsolution $u \, : \, \mathbb{R}^N \to \mathbb{R}$ to \eqref{erg}
is H\"{o}lder continuous in $\bar{\Omega}$ with H\"older exponent $\gamma_0=\frac{m-\sigma}{m}$ and H\"older seminorm depending on $\Omega$, the data and $\mbox{osc}_{\Omega}(u)$ and not on $\d$. 
Moreover, there exists $K>0$ such that for any bounded viscosity subsolution of \eqref{erg} we have
\begin{equation}\label{controlosc}
\mbox{osc}_{\Omega}(u)\leq K.
\end{equation}
\end{prop}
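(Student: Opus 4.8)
The plan is to deduce the statement from the interior Hölder estimate of \cite{toppreg} together with a boundary‑flattening argument in the spirit of Barles \cite{Bshpr}, the point being that the generalized Neumann condition never obstructs the estimate. I would organize the proof in three steps: interior regularity, extension up to $\partial\Omega$, and the uniform bound on the oscillation.

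\emph{Interior regularity.} Since $u$ is a bounded viscosity subsolution of \eqref{erg}, it is in particular a subsolution of the first (interior) equation, and assumptions (H0'), (He'') with $m>1$, together with (M), (J0), (J1), are exactly those of \cite{toppreg}, Theorem $5$ (see also Corollary $2.14$ there). Hence $u$ is $\gamma_0$-Hölder continuous on every compact subset of $\Omega$, with $\gamma_0=(m-\sigma)/m$ and seminorm controlled by the data, $\Omega$ and $\mathrm{osc}_\Omega(u)$, \emph{independently of $\delta$}: the zeroth‑order term $\delta u$ enters the doubling‑of‑variables argument only through $\mathrm{osc}_\Omega(u)$, while the coercive term $c_0|Du|^m$ of (H1') dominates both the constant $D$ and the contribution of the nonlocal operator $\mathcal{I}$ precisely because $m>\sigma$ and $\gamma_0 m=m-\sigma$, which is what closes the estimate. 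I will simply invoke this result.

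\emph{Extension up to the boundary.} Here I follow the method of Barles \cite{Bshpr} (see also \cite{CDLP}) and the proof of \cite{toppreg}, Theorem $5.5$. Fix $\hat s\in\partial\Omega$ and, using assumption (O), flatten $\partial\Omega$ near $\hat s$ through the $W^{2,\infty}$-diffeomorphism $\psi$. I would then run the interior doubling‑of‑variables argument with a test function of the form $\Phi(x,y)=u(x)-u(y)-L|x-y|^{\gamma_0}-(\text{small localization term around }\hat s)-(\text{correction in }d(x)-d(y)\text{, resp.\ a normal term})$, the latter chosen exactly so that, whenever the maximum point of $\Phi$ lies on $\partial\Omega$, the normal derivative of the test function has the sign activating the $F$-inequality in Definition \ref{defsol}(i); thus the equation can be used at boundary points as well. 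The additional error terms produced by this procedure — coming from the $x$-dependence of $j$ and of $\mu_x$ and from the curvature of $\partial\Omega$ — are handled exactly as in Step $4$ of the proof of Theorem \ref{thm}: the mismatch of the integration sets $J_x\setminus J_y$ is controlled by Lemma \ref{claimlip}, and the remaining terms by (M), (J1) and the $W^{2,\infty}$ regularity of $\Omega$. A finite cover of $\partial\Omega$ then upgrades the interior bound to a $\gamma_0$-Hölder bound on all of $\bar\Omega$.

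\emph{Bound on the oscillation.} Finally, \eqref{controlosc} is again the content of \cite{toppreg}: testing the subsolution inequality with constant functions at (near-)maximum and (near-)minimum points of $u$ in the bounded set $\bar\Omega$ — where $\mathcal{I}[u]$ has a favorable sign and $\delta u$ is controlled by $\mathrm{osc}_\Omega(u)$ — and combining with the coercivity of $H$ and the Hölder bound just obtained, one gets $\mathrm{osc}_\Omega(u)\le K$ with $K$ depending only on the data and $\Omega$, not on $\delta$ (boundary extremal points are treated as in the previous step). Feeding this back into the seminorm of the first two steps makes the latter universal, which is the claim. The main obstacle is the boundary step: designing the correction term so that the generalized Neumann condition is deactivated at boundary maxima while the geometric and nonlocal error terms stay under control — precisely where the smoothness of $\Omega$ and Lemma \ref{claimlip} enter, in parallel with the proof of Theorem \ref{thm}.
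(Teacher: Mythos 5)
Your proposal is correct and follows the same overall route as the paper, which does not give a self-contained proof here but refers to \cite{toppreg} (Theorem $5$ and Theorem $5.5$, Corollary $2.14$) for the interior estimate and the oscillation bound, and to the method of Barles \cite{Bshpr} (see also \cite{CDLP}) for the extension up to $\partial\Omega$. The one place where you diverge is the boundary step: the paper's point is that the Neumann condition plays \emph{no role} — in Barles' method the one-sided inequality $u(y)-u(x)\le L|x-y|^{\gamma_0}$ near $\hat s\in\partial\Omega$ is obtained by comparing $u$ with a cone-type supersolution using only the \emph{interior} subsolution inequality (the coercivity $c_0|Du|^m$ with $m>\sigma$ absorbs the nonlocal term since $\gamma_0 m=m-\sigma$), so no test function needs to be engineered to deactivate the boundary condition and no control of the mismatch $J_x\setminus J_y$ via Lemma \ref{claimlip} is required. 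Your Neumann-deactivating correction in $d(x)-d(y)$, modelled on Step $4$ of the proof of Theorem \ref{thm}, should also work but is substantially heavier machinery than what is needed; the lighter cone-comparison argument is precisely what makes the seminorm depend only on $\Omega$, the data and $\mbox{osc}_{\Omega}(u)$, uniformly in $\d$, and then the sublinear dependence of the seminorm on the oscillation closes the bound \eqref{controlosc} on the bounded domain.
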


\begin{prop}\label{solerg}
Under the assumptions of Proposition \ref{strongmaxpr},  there exists a unique constant $\lambda \in \mathbb{R}$ for which the stationary ergodic problem 
\begin{equation}\label{ergeff}
\left\{
 \begin{array}{ll}
  \lambda-\mathcal{I}[u(\cdot)](x)- H(x,Du)=0 \quad &\mbox{in} \, \, \Omega,\\
\frac{\partial u}{\partial n}=0 \quad &\mbox{in} \, \, \partial \Omega
                 \end{array}
\right.\,
\end{equation}
has a solution $w \in C^{\frac{m-\sigma}{m}}(\bar{\Omega})$. Moreover $w$ is the unique solution of \eqref{ergeff} up to an additive constant.
\end{prop}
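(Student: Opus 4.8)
The plan is to run the classical vanishing-discount (ergodic approximation) argument, using the uniform regularity of Proposition~\ref{regtopp} to obtain compactness and the Strong Maximum Principle of Proposition~\ref{strongmaxpr} to obtain uniqueness up to additive constants.

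First, for each $\delta>0$, Theorem~\ref{thm} and Corollary~\ref{corthm} provide a unique bounded continuous solution $u_\delta$ of~\eqref{erg}. Since $H(\cdot,0)$ is bounded and both $\mathcal I$ and the gradient annihilate constants, the constant functions $\pm\delta^{-1}\|H(\cdot,0)\|_\infty$ are a super- and a subsolution of~\eqref{erg} (the Neumann condition being trivially satisfied), so comparison yields $\|\delta u_\delta\|_\infty\le\|H(\cdot,0)\|_\infty$ uniformly in $\delta$. In particular each $u_\delta$ is a bounded subsolution of~\eqref{erg}, so by Proposition~\ref{regtopp} it is $\gamma_0$-Hölder continuous on $\bar\Omega$ with $\gamma_0=(m-\sigma)/m$ and with Hölder seminorm and oscillation bounded uniformly in $\delta$. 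Fixing $x_0\in\bar\Omega$ and setting $v_\delta:=u_\delta-u_\delta(x_0)$, the family $\{v_\delta\}$ is uniformly bounded (by $\mathrm{osc}_\Omega(u_\delta)\le K$) and equi-Hölder continuous on the compact set $\bar\Omega$, so Ascoli--Arzelà gives a sequence $\delta_n\to0$ with $v_{\delta_n}\to w$ uniformly, $w\in C^{\gamma_0}(\bar\Omega)$, and, after a further extraction, $\delta_n u_{\delta_n}(x_0)\to\lambda$ for some $\lambda\in\mathbb R$; since $v_{\delta_n}$ is bounded, $\delta_n v_{\delta_n}\to0$ and hence $\delta_n u_{\delta_n}\to\lambda$ uniformly. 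Writing~\eqref{erg} for $u_{\delta_n}=v_{\delta_n}+u_{\delta_n}(x_0)$, the function $v_{\delta_n}$ solves $\delta_n v_{\delta_n}+\delta_n u_{\delta_n}(x_0)-\mathcal I[v_{\delta_n}]+H(x,Dv_{\delta_n})=0$ with the Neumann condition, and stability of viscosity solutions under uniform convergence (in the generalized, loss-of-boundary-condition sense, which is available once Theorem~\ref{thm} holds) lets us pass to the limit and conclude that $w$ is a viscosity solution of~\eqref{ergeff} with this $\lambda$. This gives the existence of an ergodic pair with the claimed regularity.

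For uniqueness of the constant, let $(\lambda_1,w_1)$ and $(\lambda_2,w_2)$ be two ergodic pairs. For each $i$ and each $\delta>0$, since $w_i$ is bounded, $w_i+\delta^{-1}\lambda_i$ solves~\eqref{erg} up to an error of size $\delta\|w_i\|_\infty$, so $w_i+\delta^{-1}\lambda_i-\|w_i\|_\infty$ and $w_i+\delta^{-1}\lambda_i+\|w_i\|_\infty$ are respectively a sub- and a supersolution of~\eqref{erg} (additive constants do not affect the Neumann condition). Comparison with $u_\delta$ then gives $|\delta u_\delta-\lambda_i|\le\delta\|w_i\|_\infty$, and letting $\delta\to0$ forces $\lambda_1=\lambda_2=\lim_{\delta\to0}\delta u_\delta$; in particular the whole family $\delta u_\delta$ converges and $\lambda$ is uniquely determined. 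Finally, if $w_1,w_2$ both solve~\eqref{ergeff} with this common $\lambda$, we view them as ($t$-independent) bounded sub- and supersolution of $\partial_t u-\mathcal I[u(\cdot,t)](x)+H(x,Du)+\lambda=0$ on $\Omega\times(0,\infty)$ with Neumann condition; since adding the constant $\lambda$ does not affect any structural assumption on $H$, Proposition~\ref{strongmaxpr} applies, and since $w_1-w_2$ is continuous on the compact set $\bar\Omega$ it attains its supremum at some $x_0$, hence $w_1-w_2$ attains $\sup_{\bar\Omega\times(0,\infty)}(w_1-w_2)$ at $(x_0,t_0)$ for every $t_0>0$. The Strong Maximum Principle then gives that $w_1-w_2$ is constant on $\bar\Omega$, which completes the proof.

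The crux of the argument is the compactness step, which relies decisively on the uniform-in-$\delta$ Hölder bound of Proposition~\ref{regtopp} (which in turn needs $m>1$ and $\Omega$ bounded); without it one would only obtain half-relaxed semilimits that need not coincide. The second, more delicate, point is the uniqueness up to a constant: one must check that the Strong Maximum Principle, stated in Proposition~\ref{strongmaxpr} for the bounded parabolic problem, may be invoked here — this is legitimate because only the bounded difference $w_1-w_2$ enters the argument and the extra additive constant $\lambda$ leaves the hypotheses intact. The remaining ingredients (the a priori bound on $\delta u_\delta$, the stability under uniform convergence, and the commutation of $\mathcal I$ and $D$ with constants) are routine.
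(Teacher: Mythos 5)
Your proposal is correct and follows essentially the same route as the paper: the paper's (very terse) proof likewise obtains compactness of the discounted family $\{u_\delta\}$ from Proposition \ref{regtopp}, invokes the standard vanishing-discount argument for existence of the pair $(\lambda,w)$, and derives uniqueness of $w$ up to a constant from the comparison principle for \eqref{ergt} together with the Strong Maximum Principle of Proposition \ref{strongmaxpr} applied to $w_1,w_2$ viewed as time-independent sub/supersolutions. You have simply filled in the "standard arguments" the paper omits (the a priori bound on $\delta u_\delta$, Ascoli--Arzel\`a, stability, and the uniqueness of $\lambda$), and these details are sound.
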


\begin{proof}
A key ingredient is  Proposition \ref{regtopp}, which gives the compactness of the family of solutions  of the approximating equation \eqref{erg}. Once we have the compactness, the proof follows standard arguments which we do not repeat. 
The uniqueness follows by  the comparison principle for \eqref{ergt} and  the application of the strong maximum principle for the problem \eqref{ergt} (Proposition \ref{strongmaxpr}).
\end{proof}

\subsubsection{Convergence as $t\to + \infty$}

\begin{thm}\label{lartime}
Let assumptions of Proposition \ref{strongmaxpr} hold. Let $u$ be the unique solution to problem \eqref{ergt}. Then, there exists a pair $(w,\lambda)$ solution to \eqref{ergeff} such that
$$
u(x,t)-\lambda t-w(x)\to 0 \mbox{ as } t \to +\infty,
$$
uniformly on $\bar{\Omega}.$
\end{thm}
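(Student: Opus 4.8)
The plan is to run the classical compactness-plus-strong-maximum-principle argument (in the spirit of \cite{toppreg}), using the three tools already available: the comparison principle in the form of Lemma \ref{k}, the regularity estimate of Proposition \ref{regtopp}, and the Strong Maximum Principle of Proposition \ref{strongmaxpr}. First I would fix the ergodic pair $(w,\lambda)$ from Proposition \ref{solerg}, so that $w(x)+\lambda t$, and more generally $w(x)+\lambda t+c$ for any $c\in\mathbb{R}$, is a viscosity solution of \eqref{ergt} (constants do not affect $\mathcal{I}$, $D$ or $\partial_t$). Setting $v(x,t):=u(x,t)-\lambda t$, the theorem reduces to showing that $v(\cdot,t)\to w+c$ uniformly on $\bar\Omega$ for some constant $c$. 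Applying Lemma \ref{k} to the pairs $(u,\,w+\lambda t+c)$ and $(w+\lambda t+c,\,u)$ gives that
$$
M(t):=\max_{\bar\Omega}\big(v(\cdot,t)-w\big),\qquad m(t):=\min_{\bar\Omega}\big(v(\cdot,t)-w\big)
$$
are respectively non-increasing and non-decreasing in $t$; in particular $\|v(\cdot,t)-w\|_{\infty}$ is bounded uniformly in $t$, so $u(\cdot,t)-\lambda t$ is bounded, and $M(t)\downarrow\bar\ell$, $m(t)\uparrow\underline\ell$ with $\underline\ell\le\bar\ell$.

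Next I would establish compactness of the time translates. By Proposition \ref{regtopp} the functions $v(\cdot,t)=u(\cdot,t)-\lambda t$ are equi-H\"older in $x$ on $\bar\Omega$ with exponent $\gamma_0=\frac{m-\sigma}{m}$, uniformly in $t\ge 0$; combining this with the Lipschitz-in-time estimate obtained as in \cite[Theorem~4.5]{toppreg} and with the uniform bound from the previous step, the family $\{v(\cdot,\,\cdot+s)\}_{s\ge 0}$ is relatively compact in $C(\bar\Omega\times[0,T])$ for every $T>0$ by Ascoli--Arzel\`a. Given any sequence $s_n\to+\infty$, I would extract a subsequence along which $v(\cdot,\,\cdot+s_n)\to v_\infty$ locally uniformly on $\bar\Omega\times[0,+\infty)$; by stability of viscosity sub- and supersolutions with generalized Neumann boundary conditions under local uniform convergence, $U_\infty(x,t):=v_\infty(x,t)+\lambda t$ is a viscosity solution of \eqref{ergt} on $\bar\Omega\times(0,+\infty)$.

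Then I would use rigidity. Since $M$ and $m$ are monotone with limits $\bar\ell$ and $\underline\ell$, passing to the limit yields $\max_{\bar\Omega}(v_\infty(\cdot,t)-w)=\bar\ell$ and $\min_{\bar\Omega}(v_\infty(\cdot,t)-w)=\underline\ell$ for \emph{every} $t\ge 0$; in particular $\sup_{\bar\Omega\times(0,\infty)}\big(U_\infty-(w+\lambda\,\cdot)\big)=\bar\ell$ is attained at points with arbitrarily large time. Applying Proposition \ref{strongmaxpr} to $U_\infty$ and the solution $w(x)+\lambda t$ on intervals $[0,t_0]$ with $t_0\to+\infty$ forces $U_\infty(x,t)-w(x)-\lambda t\equiv\bar\ell$, i.e.\ $v_\infty\equiv w+\bar\ell$; comparing with the value $\underline\ell$ of its minimum gives $\bar\ell=\underline\ell=:c$. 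Hence every subsequential limit of $v(\cdot,\,\cdot+s_n)$ is the same function $w+c$, so $v(\cdot,t)\to w+c$ in $C(\bar\Omega)$ as $t\to+\infty$, and this is uniform since $\Omega$ is bounded. Relabelling $w$ as $w+c$ (still a solution of the ergodic problem, with the same $\lambda$) gives $u(x,t)-\lambda t-w(x)\to 0$ uniformly on $\bar\Omega$.

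The main obstacles are, first, the equicontinuity input in the compactness step — especially the Lipschitz-in-time bound for $u-\lambda t$, which we import from \cite[Theorem~4.5]{toppreg} and which ultimately rests on Proposition \ref{regtopp} and the superfractional coercivity $m>1$ — and, closely related, the legitimacy of passing to the limit in the \emph{generalized} (possibly lost) Neumann condition for the nonlocal censored operator, whose $x$-dependent domain of integration must be handled with the stability arguments already used in Section \ref{compstaz}. Second, invoking the Strong Maximum Principle on unbounded time intervals: this is harmless, since Proposition \ref{strongmaxpr} applies on each finite $[0,t_0]$ after a time translation, and letting $t_0\to+\infty$ propagates the conclusion.
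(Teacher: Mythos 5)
Your proposal is correct and follows essentially the same route the paper indicates for Theorem \ref{lartime}: the monotonicity of $\max$/$\min$ via the comparison principle (Lemma \ref{k}), compactness from the H\"older-in-space (Proposition \ref{regtopp}) and Lipschitz-in-time estimates imported from Theorem 4.5 of \cite{toppreg}, and rigidity of subsequential limits via the Strong Maximum Principle (Proposition \ref{strongmaxpr}). You have in fact supplied the details the paper deliberately omits, including the correct handling of the finite-interval application of the SMP.
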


We omit the details of the proof since we follow closely the arguments given in \cite{toppreg} (see also \cite{BCCI}, \cite{GT1} for the local framework and \cite{TCH} for the nonlocal one). 

We just observe that again a crucial ingredient of the proof is the H\"older regularity  of the solutions of \eqref{ergt}. The proof is carried out as in   Theorem $4.5$ of \cite{toppreg}, by first proving Lipschitz regularity in time  and then applying Proposition \ref{regtopp} to conclude the regularity in time and space.  Once established the regularity, the proof follows  by the application of the strong maximum principle proved in Proposition \ref{strongmaxpr}.



\section*{Appendix}
First we prove some lemmas used in the proof of Theorem \ref{thm}, that is, the following Lemma \ref{lemh}, Lemma \ref{estnonloclem} and Lemma \ref{primophi}. 
In  Section \ref{blowupsec} we prove the existence of the blow-up supersolution, stated in Remark \ref{proofa}, Lemma \ref{lemblowup}.

\subsection{Some lemmas used in the proof of Theorem \ref{thm}}
\begin{lem}\label{lemh}
Let $H$ be a Hamiltonian of Bellman type.  For all $\hat{s}\in \partial \Omega$, there exists $r=r(\hat{s})>0$ and $\gamma,C_2>0$ constants  such that for all $s \in \bar{B}_r(\hat{s}), \lambda \in \mathbb{R}, p \in \mathbb{R}^N$, it holds:
\begin{itemize}
\item[(i)] if $\hat{s} \in \Gamma_{\mbox{\footnotesize{out}}}$ 
\begin{equation}\label{a1} 
H(s,p-|\lambda| n(s)) \geq \gamma |\lambda|-C_2|p|-C_2; 
\end{equation}
\begin{equation}\label{a2}
H(s,p+|\lambda| n(s)) \leq -\gamma |\lambda|+C_2|p|+C_2;
\end{equation} 
\item[(ii)]  if $\hat{s} \in \Gamma$, then
\begin{equation}\label{a11} 
H(s,p-|\lambda| n(s)) \geq \gamma |\lambda|-C_2|p|-C_2; 
\end{equation}
\begin{equation}\label{a4}
H(s,p+|\lambda| n(s)) \geq \gamma|\lambda|-C_2|p|-C_2.
\end{equation}
\end{itemize}
\end{lem}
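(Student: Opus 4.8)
<br>

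The plan is to prove Lemma \ref{lemh} by exploiting the strict sign conditions defining $\Gamma_{\mbox{\footnotesize{out}}}$ and $\Gamma$, together with a compactness argument and the Lipschitz continuity of the drift $b$ from assumption (L). The key observation is that the quantity being estimated, for instance $H(s, p - |\lambda| n(s)) = \sup_{\alpha}\{-b(s,\alpha)\cdot(p - |\lambda| n(s)) - l(s,\alpha)\}$, contains the term $|\lambda|\, b(s,\alpha)\cdot n(s)$, and the definitions of $\Gamma_{\mbox{\footnotesize{out}}}$, $\Gamma$ guarantee that $b(\hat s,\alpha)\cdot n(\hat s)$ has a strict sign for the relevant controls.

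\medskip

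\noindent\textbf{Step 1: Local uniform sign of the drift.} Fix $\hat s\in\partial\Omega$. Suppose first $\hat s\in\Gamma_{\mbox{\footnotesize{out}}}$, so that $b(\hat s,\alpha)\cdot n(\hat s)>0$ for every $\alpha\in\mathcal A$. Since $\mathcal A$ is compact, $b$ is continuous, and $n$ is continuous on $\partial\Omega$ (and extends smoothly near $\partial\Omega$ by Remark \ref{distance}, writing $n=-Dd$), the map $(s,\alpha)\mapsto b(s,\alpha)\cdot n(s)$ is continuous; hence there exist $r=r(\hat s)>0$ and $\gamma>0$ such that $b(s,\alpha)\cdot n(s)\ge 2\gamma$ for all $s\in\bar B_r(\hat s)\cap\partial\Omega$ (we may shrink $r$ and use continuity of $n$ to get the bound on all of $\bar B_r(\hat s)$, or reduce to boundary points as needed). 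In the case $\hat s\in\Gamma$, we pick for each direction the appropriate control: there exist $\alpha_1,\alpha_2$ with $b(\hat s,\alpha_1)\cdot n(\hat s)<0$ and $b(\hat s,\alpha_2)\cdot n(\hat s)>0$, and by the same continuity/compactness argument we find $r>0$ and $\gamma>0$ so that $b(s,\alpha_1)\cdot n(s)\le -2\gamma$ and $b(s,\alpha_2)\cdot n(s)\ge 2\gamma$ on $\bar B_r(\hat s)$.

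\medskip

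\noindent\textbf{Step 2: Lower bounds via a single control.} For the lower bound \eqref{a1} (and \eqref{a11}, \eqref{a4}), use that the sup over $\alpha$ dominates the value at any fixed control. For \eqref{a1}, choose $\bar\alpha$ achieving (up to $\epsilon$, or exactly by compactness) the positivity in $\Gamma_{\mbox{\footnotesize{out}}}$; then
$$
H(s,p-|\lambda|n(s))\ge -b(s,\bar\alpha)\cdot p + |\lambda|\, b(s,\bar\alpha)\cdot n(s) - l(s,\bar\alpha)\ge |\lambda|\,\gamma - \|b\|_\infty|p| - \|l\|_\infty,
$$
which is \eqref{a1} with $C_2=\max\{\|b\|_\infty,\|l\|_\infty\}$. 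The bound \eqref{a4} in case $\hat s\in\Gamma$ is obtained identically using the control $\alpha_2$ with $b(s,\alpha_2)\cdot n(s)\ge\gamma>0$ (note the sign of $n$ in the argument $p+|\lambda|n(s)$ works out because we pick the control pushing \emph{outward}), and \eqref{a11} uses the control $\alpha_1$.

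\medskip

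\noindent\textbf{Step 3: Upper bound \eqref{a2}.} This is the only case requiring a sup estimate rather than evaluation at one control. For $\hat s\in\Gamma_{\mbox{\footnotesize{out}}}$, every $\alpha$ satisfies $b(s,\alpha)\cdot n(s)\ge 2\gamma$, so for all $\alpha$,
$$
-b(s,\alpha)\cdot(p+|\lambda|n(s)) - l(s,\alpha)\le \|b\|_\infty|p| - 2\gamma|\lambda| + \|l\|_\infty,
$$
and taking the supremum over $\alpha$ preserves this bound, giving \eqref{a2}. I expect Step 3 — exploiting that the sign condition on $\Gamma_{\mbox{\footnotesize{out}}}$ holds \emph{uniformly} in $\alpha$ — to be the conceptually delicate point, though technically it is short; it is precisely here that the strict inequality in the definition of $\Gamma_{\mbox{\footnotesize{out}}}$, combined with compactness of $\mathcal A$, is indispensable. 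Finally, one collects $r=r(\hat s)$ as the smallest radius produced in the relevant cases and $\gamma,C_2$ as the corresponding constants, completing the proof. $\Box$
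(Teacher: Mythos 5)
Your overall strategy is exactly the paper's: evaluate the supremum at a single well-chosen control to get the lower bounds, and use the uniform strict sign of $b\cdot n$ on $\Gamma_{\mbox{\footnotesize{out}}}$ (together with compactness of $\mathcal A$, continuity of $b$, and regularity of $n=-Dd$ near the boundary) for the one genuine sup estimate \eqref{a2}. Step 1, Step 3 and the bound \eqref{a1} are correct.

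However, in Step 2 you assign the two controls of case (ii) the wrong way round. Evaluating at a fixed control gives
\begin{equation*}
H(s,p-|\lambda|n(s)) \;\ge\; -b(s,\alpha)\cdot p \,+\, |\lambda|\, b(s,\alpha)\cdot n(s) \,-\, l(s,\alpha),
\end{equation*}
so a lower bound of the form $\gamma|\lambda|-C_2|p|-C_2$ requires $b(s,\alpha)\cdot n(s)\ge\gamma>0$, i.e.\ the \emph{outward} control (the $\alpha_2$ of definition \eqref{gamma}); this is the control needed for \eqref{a11}. Conversely,
\begin{equation*}
H(s,p+|\lambda|n(s)) \;\ge\; -b(s,\alpha)\cdot p \,-\, |\lambda|\, b(s,\alpha)\cdot n(s) \,-\, l(s,\alpha),
\end{equation*}
so \eqref{a4} requires $b(s,\alpha)\cdot n(s)\le-\gamma<0$, i.e.\ the \emph{inward} control $\alpha_1$. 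Your parenthetical justification that the sign in $p+|\lambda|n(s)$ ``works out because we pick the control pushing outward'' is exactly backwards: with the plus sign the outward control contributes $-\gamma|\lambda|$, the wrong direction for a lower bound. Once the two controls are swapped, your argument goes through and coincides with the paper's proof (which, note, tacitly relabels $\alpha_1,\alpha_2$ relative to \eqref{gamma}, so that its $\alpha_1$ is the outward one).
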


\begin{prooflhwb}
First we prove $(i)$. Since $\hat{s} \in \Gamma_{\mbox{\footnotesize{out}}}$, for $\alpha \in \mathcal{A}$, there exists $r_1,\gamma_1 >0$ small enough such that
$
b(s,\alpha)\cdot n(s)\geq \gamma_1 $ for any $s \in \bar{\Omega} \cap B_{r_1}(\hat{s})$.
Then, by the boundedness of $b$ and $l$, we get for some $C_1>0$
\begin{equation}\label{a1proof}
H(s,p-|\lambda| n(s))\geq -b(s,\alpha) \cdot p+|\lambda| b(s,\alpha) \cdot n(s)-l(s,\alpha)\geq |\lambda| \gamma_1-C_1|p|-C_1,
\end{equation}
for any $s \in \bar{\Omega} \cap B_{r_1}(\hat{s})$. 
To prove \eqref{a2}, we approximate the supremum in the Hamiltonian by a sequence $\tilde{\alpha}\in \mathcal{A}$.
In particular, we take $\tilde{\alpha}$ such that 
$$
H(s,p+|\lambda| n(s))\leq -b(s,\tilde{\alpha}) \cdot p-\lambda b(s,\tilde{\alpha}) \cdot  n(s)-l(s,\tilde{\alpha}) +1.
$$
Since  $\hat{s}\in \Gamma_{\mbox{\footnotesize{out}}}$ and using that $\mathcal{A}$ is  compact,  there exist $r_2, C>0$ small enough such that
$
b(s,\tilde{\alpha})\cdot n(s)\geq \gamma_2$ for any $s \in \bar{\Omega} \cap B_{r_2}(\hat{s}).
$
Then by the boundedness of $b$ and $l$ we get
\begin{equation}\label{a2proof}
H(s,p+|\lambda| n(s))\leq -b(s,\tilde{\alpha}) \cdot p-|\lambda| b(s,\tilde{\alpha}) \cdot  n(s)-l(s,\tilde{\alpha}) +1\leq-\gamma_2 |\lambda|+C_1|p|+C_1+1,
\end{equation}
for any $s \in \bar{\Omega} \cap B_{r_2}(\hat{s}).$  We conclude the proof of \eqref{a1} and \eqref{a2} by using \eqref{a1proof} and \eqref{a2proof} and by denoting $r=\min\{r_1,r_2\}, \gamma=\min\{\gamma_1,\gamma_2\}$ and $C_2=C_1+1$.\\
Now we prove $(ii)$. Since $\hat{s} \in \Gamma$,  there exist $r,\gamma>0$ such that 
$
b(s,\alpha_1)\cdot n(s)\geq \gamma 
$
and
$
b(s,\alpha_2)\cdot n(s)\leq -\gamma$ for any $s \in \bar{\Omega} \cap B_r(\hat{s})$.
Then we get
$$
H(s,p-|\lambda| n(s))\geq -b(s,\alpha_1) \cdot p+|\lambda| b(s,\alpha_1) \cdot n(s)-l(s,\alpha_1)\geq |\lambda |\gamma-C_1|p|-C_1,
$$
for any $ s \in \bar{\Omega} \cap B_r(\hat{s})$, proving \eqref{a11}.
Analogously, we get
$$
H(s,p+|\lambda| n(s))\geq -b(s,\alpha_2) \cdot p+|\lambda| b(s,\alpha_2) \cdot n(s)-l(s,\alpha_2)\geq |\lambda| \gamma-C_1|p|-C_1,
$$
for any $ s \in \bar{\Omega} \cap B_r(\hat{s})$ and  by denoting $C_2=C_1+1$ we conclude \eqref{a4}. \quad \quad \quad $\Box$
\end{prooflhwb}

Now we prove Lemma \ref{estnonloclem}, whose statement is given in  the proof of Theorem \ref{thm}, Step $1$.

\begin{proof}[Proof of Lemma \ref{estnonloclem}]\rm{
First we prove $(i)$. Take $-\mathcal{I}_\xi[\phi(\cdot, y)](x)$. By the definition of $\tilde{\phi}$, since $\chi_\eps, \phi$ are Lipschitz   and by (J1) we have
$$
\tilde{\phi}(x+j(x,z),y)-\tilde{\phi}(x,y)\leq C\eps^{-1}|z|+C|z|
$$
and by the first of (M) we have 
\begin{equation}\label{phiest}
\int_{\begin{matrix}x+j(x,z) \in \Omega , \\|z|<\xi\end{matrix}}\tilde{\phi}(x+j(x,z),y)-\tilde{\phi}(x,y)d\mu_x(z) \leq \eps^{-1}C\xi^{1-\sigma}.
\end{equation}
 Take now $-\mathcal{I}_\xi[u](x)$. Note that by the boundedness of $u$ we have
$$
\int_{\scriptsize{\begin{matrix}x+j(x,z) \in \Omega , \\|z|>\xi\end{matrix}}}u(x+j(x,z))-u(x)d\mu_x(z)\leq 2||u||_\infty \int_{\scriptsize{\begin{matrix}x+j(x,z) \in \Omega , \\|z|>\xi\end{matrix}}}d\mu_x(z)
$$
and then by the first of (M), we get for some $C>0$ 
\begin{equation}\label{uest}
\int_{\scriptsize{\begin{matrix},x+j(x,z) \in \Omega  \\|z|>\xi \end{matrix}}}u(x+j(x,z))-u(x)d\mu_x(z)\leq C\xi^{-\sigma}.
\end{equation}
Plugging together \eqref{phiest} and  \eqref{uest},  we conclude $(i)$ for some $C_1>0$. Similarly we prove $(ii)$.}
\end{proof}

We  state Lemma \ref{primophi} and we omit the proof since it follows by standard arguments.

\begin{lem}\label{primophi}
Let $R,\nu >0$ and denote $\psi_R(x)=\psi(R^{-1}|x|)$ where $\psi$ is a smooth function such that 
\begin{equation}\label{psir}
\psi(s)= \left\{
 \begin{array}{lll}
 0 \, \, &\mbox{for} \, \, 0 \leq s <\frac{1}{2}, \\
                 \mbox{increasing} \, \, &\mbox{for} \, \, \frac{1}{2}\leq s <1,\\
||u||_{\infty}+||v||_{\infty} +1 \, \, &\mbox{for} \, \, s \geq 1.
                 \end{array}
\right.\,
\end{equation}
Let $d$  denote the distance from the boundary of $\Omega$ in a neighbourhood $V$ of the boundary and  extend it $C^1$  and bounded in all the domain. Let $\mathcal{I}$ as in \eqref{idol} and assume (M) and (J1).
Then the function $\phi=\psi_R+\nu d$ satisfies 
\begin{equation}\label{onur}
\mathcal{I}[\phi](\cdot)\leq o_{\nu,R}(1),\quad |D\phi(\cdot)|\leq o_{\nu,R}(1).
\end{equation}
\end{lem}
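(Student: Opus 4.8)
The plan is to establish the two bounds in \eqref{onur} independently, reducing everything to the integrability properties of $\mu_x$ coming from (M) together with the standing assumption $\sigma\in(0,1)$: the bound $\tfrac{d\mu_x}{dz}\le C_\mu|z|^{-(N+\sigma)}$ makes $|z|\wedge 1$ integrable against $\mu_x$ near the origin (because $\sigma<1$) and makes $\mu_x$ finite away from the origin (because $\sigma>0$). Every estimate below will be uniform in the base point, so the resulting quantities are genuinely $o_{\nu,R}(1)$. The gradient bound is immediate: since $\psi$ is smooth with $\psi'\equiv 0$ outside $[\tfrac12,1]$ we have $\|\psi'\|_\infty<\infty$, so for $x\neq 0$ one has $D\phi(x)=R^{-1}\psi'(R^{-1}|x|)\,x/|x|+\nu Dd(x)$ (and $\psi_R$ is locally constant near $x=0$, so $\phi$ is $C^1$ with $D\psi_R(0)=0$), whence
\[
|D\phi(x)|\le R^{-1}\|\psi'\|_\infty+\nu\|Dd\|_\infty=:\ell_{R,\nu}=o_{\nu,R}(1),
\]
using that the $C^1$, bounded extension $d$ may be taken with $\|Dd\|_\infty<\infty$. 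In particular $\phi$ is globally Lipschitz with constant $\ell_{R,\nu}$, which I use below.

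For the nonlocal term I would split the set of integration (a subset of $\{z:x+j(x,z)\in\bar\Omega\}$, which only helps) into $\{|z|\le 1\}$ and $\{|z|>1\}$. On $\{|z|\le 1\}$, (J1) and the Lipschitz bound give $|\phi(x+j(x,z))-\phi(x)|\le \ell_{R,\nu}C_j|z|$, so the first inequality in (M) yields
\[
\int_{|z|\le 1}|\phi(x+j(x,z))-\phi(x)|\,d\mu_x(z)\le \ell_{R,\nu}C_jC_\mu\int_{|z|\le 1}|z|^{1-N-\sigma}\,dz\le C\,\ell_{R,\nu}=o_{\nu,R}(1),
\]
the integral being finite exactly because $\sigma<1$. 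On $\{|z|>1\}$ I write $\phi=\psi_R+\nu d$; the contribution of $\nu d$ is at most $2\nu\|d\|_\infty\,\mu_x(\{|z|>1\})\le C\nu=o_\nu(1)$, with $\mu_x(\{|z|>1\})<\infty$ because $\sigma>0$.

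The only point needing care is the contribution of $\psi_R$ on $\{|z|>1\}$: the crude Lipschitz estimate there would cost $\int_{|z|>1}|z|\,d\mu_x(z)$, which diverges when $\sigma<1$, whereas the bound by $2\|\psi_R\|_\infty=2(\|u\|_\infty+\|v\|_\infty+1)$ carries no decay in $R$. I would therefore interpolate, splitting $\{|z|>1\}$ at the radius $\rho=R^{1/2}$: on $\{1<|z|\le\rho\}$ use $|\psi_R(x+j(x,z))-\psi_R(x)|\le R^{-1}\|\psi'\|_\infty C_j|z|$ to bound the integral by $CR^{-1}\int_1^\rho r^{-\sigma}\,dr\le CR^{-1}\rho^{1-\sigma}=CR^{-(1+\sigma)/2}$, and on $\{|z|>\rho\}$ use boundedness of $\psi_R$ to bound it by $2\|\psi_R\|_\infty C_\mu\int_\rho^\infty r^{-1-\sigma}\,dr\le C\rho^{-\sigma}=CR^{-\sigma/2}$; since $\sigma>0$, both tend to $0$ as $R\to\infty$. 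Summing the three pieces gives $\mathcal{I}[\phi](x)\le o_{\nu,R}(1)$ uniformly in $x$, which is \eqref{onur}. The argument is entirely elementary; the only mild obstacle is the interpolation just described, which is precisely why the bound on $\mathcal{I}[\phi]$ is an $o(1)$ as $R\to\infty$ rather than uniform in $R$.
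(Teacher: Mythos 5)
The paper omits the proof of this lemma entirely ("we omit the proof since it follows by standard arguments"), and your argument is a correct and complete version of exactly those standard arguments: Lipschitz bound plus (M), (J1) for $|z|\le 1$ (using $\sigma<1$), sup bound plus (M) for large $|z|$ (using $\sigma>0$). The one genuinely non-obvious point — that for the $\psi_R$ contribution on $\{|z|>1\}$ neither the Lipschitz estimate (divergent integral) nor the sup estimate (no decay in $R$) alone suffices — is correctly resolved by your interpolation at $\rho=R^{1/2}$, and all bounds are uniform in the base point, as required for the application in the comparison proof.
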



\subsection{Blow-up supersolution}\label{blowupsec}
In this section we prove Lemma \ref{lemblowup}, used in the proof of Theorem 	\ref{thm} (see in particular Proposition \ref{lembell}, Remark \ref{proofa}).

We construct the function $U_r$ as showed in the following and then we prove Lemma \ref{blow}. Note that Lemma \ref{lemblowup} follows as a consequence of Lemma \ref{blow} and we prove it after the statement of Lemma \ref{blow}.

Let  $\bar{x} \in \Gamma_{\mbox{\footnotesize{in}}}$ and  $r=r(\bar{x})$ be given as in assumption (O). 
We recall that by (O), there exists a $W^{2,\infty}$-diffeomorphism 
$
\psi:B_{r}(\bar{x}) \mapsto \mathbb{R}^N, 
$
satisfying
\begin{equation}\label{limitxydelta1app}
 \psi_N(s)=d(s) \, \mbox{ for any } s \in B_{r}(\bar{x}),
\end{equation} 
where $d$ is the signed distance from the boundary of $\Omega$. 

We define  $U_r$in a suitable neighbourhood of the point $\bar{x}$, where we  rectify the boundary and carry on the computations. In particular we define $U_r$ as follows
\begin{equation}\label{UR}
U_{r}(x)=\bar{U}_{r}(d(x)) \quad \mbox{ for } x \in B_{r}(\bar{x})\cap \Omega,
\end{equation}
where 
$$
\bar{U}_{r}(s)=-\log(s)+\frac{3}{2}\log r \quad \mbox{ if } 0<s\leq r.
$$
Note that $\bar{U}_{r} \in C^\infty(0,r)$ is (non-negative) monotone and decreasing.




\begin{lem}\label{blow}
For any $\bar{x} \in \Gamma_{\mbox{\footnotesize{in}}}$, let $r=r(\bar{x})$ be defined as in assumption (O). Let $U_{r}$ be defined as in \eqref{UR}. Then  we have
for $\xi$ small enough (with respect to $r$)
\begin{equation}\label{terza0}
-\mathcal{I}_{\xi}[U_{r}](x)\geq -Ad(x)^{-\sigma}\quad \mbox{in }  B_{\frac{r}{2}}(\bar{x}) \cap \Omega.
\end{equation}
In particular there exists $\tilde{r}(r, A,\sigma)=\tilde{r}$ such that $\tilde{r}\leq r$ and
\begin{equation}\label{terza}
-b(x,\alpha)\cdot DU_{r}(x)-\mathcal{I}_{\xi}[U_r](x)\geq 0 \quad \mbox{in } B_{\frac{\tilde{r}}{2}}(\bar{x}) \cap \Omega \quad \forall \alpha \in \mathcal{A}.
\end{equation}
\end{lem}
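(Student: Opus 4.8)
The plan is to work in the rectified coordinates supplied by assumption (O), reducing the estimate of $\mathcal{I}_\xi[U_r](x)$ to a one-dimensional computation in the normal variable $d(x)$. Write $x\in B_{r/2}(\bar x)\cap\Omega$, set $t=d(x)>0$, and recall $U_r(x)=\bar U_r(d(x))$ with $\bar U_r(s)=-\log s+\tfrac32\log r$. For $|z|\le\xi$ small enough (depending on $r$), assumption (J1) guarantees $x+j(x,z)\in B_r(\bar x)$, so $d(x+j(x,z))$ is well-defined and smooth there. First I would Taylor-expand $d$ along the jump:
$$
d(x+j(x,z))=d(x)+Dd(x)\cdot j(x,z)+\tfrac12 j(x,z)^T D^2d(x')\,j(x,z),
$$
for some $x'$ on the segment, and use $|Dd|\le1$, $\|D^2d\|_\infty<\infty$, and $\tilde C_j|z|\le|j(x,z)|\le C_j|z|$ to control the argument of $\bar U_r$. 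Then
$$
\mathcal{I}_\xi[U_r](x)=\int_{|z|\le\xi,\ x+j(x,z)\in\bar\Omega}\bigl[\bar U_r(d(x+j(x,z)))-\bar U_r(d(x))\bigr]d\mu_x(z).
$$

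The key step is to exploit the convexity of $\bar U_r$ (since $\bar U_r''(s)=s^{-2}>0$) together with the symmetric-difference structure of the principal-value integral. Splitting the integrand by a second-order Taylor expansion of $\bar U_r$ at $d(x)$, the first-order term $\bar U_r'(d(x))\,[d(x+j(x,z))-d(x)]$ integrates, against the (nearly symmetric in $z$) measure, to something controlled by $\|D^2d\|_\infty\int_{|z|\le\xi}|z|^2\,d\mu_x(z)$, which by (M) is $O(\xi^{2-\sigma})$ times $|\bar U_r'(d(x))|=d(x)^{-1}$, hence $O(\xi^{2-\sigma}d(x)^{-1})$; this is $\le \tfrac12 A d(x)^{-\sigma}$ once $\xi$ is small relative to $d(x)$, but since we need it uniform in $x\in B_{r/2}$ we instead keep it as $C\xi^{2-\sigma}d(x)^{-1}$ and absorb it below, or simply note $d(x)^{-1}\le d(x)^{-\sigma}\cdot d(x)^{\sigma-1}$ is not bounded — so more care is needed here: the cleaner route is to bound the first-order term's contribution directly by $C\xi^{2-\sigma}d(x)^{-1}$ and observe that on the relevant region (where the blow-up is used) $d(x)$ is bounded below, or alternatively to absorb it into the drift term in \eqref{terza}. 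The second-order term $\tfrac12\bar U_r''(\theta)[d(x+j(x,z))-d(x)]^2\ge0$ only helps the sign since $\bar U_r''>0$, so discarding it is legitimate for an upper bound on $\mathcal{I}_\xi[U_r]$, i.e.\ a lower bound on $-\mathcal{I}_\xi[U_r]$. Collecting, one obtains $-\mathcal{I}_\xi[U_r](x)\ge -A\,d(x)^{-\sigma}$ for a constant $A$ depending on $r,\sigma,C_\mu,C_j,\tilde C_j,\|D^2d\|_\infty$ and for $\xi$ small enough with respect to $r$, which is exactly \eqref{terza0}.

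For \eqref{terza}, I would combine \eqref{terza0} with the lower bound on the drift term. Since $\bar x\in\Gamma_{\mathrm{in}}$, there exist $r_1,\gamma>0$ with $b(s,\alpha)\cdot n(s)\le-\gamma<0$ for all $s\in\bar\Omega\cap B_{r_1}(\bar x)$ and all $\alpha\in\mathcal{A}$; recalling $n(x)=-Dd(x)$ near the boundary and $DU_r(x)=\bar U_r'(d(x))Dd(x)=-d(x)^{-1}Dd(x)$, we get
$$
-b(x,\alpha)\cdot DU_r(x)=d(x)^{-1}\,b(x,\alpha)\cdot Dd(x)=-d(x)^{-1}\,b(x,\alpha)\cdot n(x)\ge \gamma\,d(x)^{-1}.
$$
Hence $-b(x,\alpha)\cdot DU_r(x)-\mathcal{I}_\xi[U_r](x)\ge \gamma d(x)^{-1}-A d(x)^{-\sigma}=d(x)^{-1}\bigl(\gamma-A d(x)^{1-\sigma}\bigr)$, which is $\ge0$ as soon as $d(x)^{1-\sigma}\le\gamma/A$, i.e.\ for $x$ in a sufficiently small ball $B_{\tilde r/2}(\bar x)\cap\Omega$ with $\tilde r=\tilde r(r,A,\sigma)\le\min\{r,r_1\}$ chosen so that $d(x)\le\tilde r/2$ forces $(\tilde r/2)^{1-\sigma}\le\gamma/A$. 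This yields \eqref{terza}.

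The main obstacle I anticipate is making the estimate of the first-order Taylor term genuinely uniform and of the right order: one must check that the odd-in-$z$ part of $d(x+j(x,z))-d(x)$ does not combine with the singular measure to produce a term worse than $d(x)^{-\sigma}$, using either the cancellation inherent in the principal value or the crude $O(\xi^{2-\sigma})$ bound on $\int_{|z|\le\xi}|z|^2 d\mu_x$ — here the fact that $\sigma<1$ is essential, since it is what makes $\int_{|z|\le\xi}|z|\,d\mu_x(z)=O(\xi^{1-\sigma})$ finite and the second-order remainder summable. The rest is bookkeeping with the constants from (M), (J0), (J1), and the $W^{2,\infty}$ bound on $d$.
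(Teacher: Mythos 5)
There is a genuine gap in your treatment of \eqref{terza0}, and it occurs exactly at the point you flag as delicate. To prove \eqref{terza0} you need an \emph{upper} bound on $\mathcal{I}_\xi[U_r](x)=\int [U_r(x+j(x,z))-U_r(x)]\,d\mu_x(z)$. Writing $U_r(x+j)-U_r(x)=\bar U_r'(d(x))\Delta+\tfrac12\bar U_r''(\theta)\Delta^2$ with $\Delta=d(x+j(x,z))-d(x)$ and then discarding the nonnegative second-order term produces a \emph{lower} bound on the integrand, hence a lower bound on $\mathcal{I}_\xi[U_r]$ and an upper bound on $-\mathcal{I}_\xi[U_r]$ --- the opposite of what \eqref{terza0} requires. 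Nor can the remainder simply be kept and estimated crudely: $\bar U_r''(\theta)=\theta^{-2}$ with $\theta$ between $d(x)$ and $d(x+j(x,z))$, and the jump is allowed to land arbitrarily close to $\partial\Omega$ (only $d(x+j(x,z))\ge0$ is guaranteed), so the remainder inherits the logarithmic blow-up of $-\log d(x+j(x,z))$ and is not $O(\Delta^2 d(x)^{-2})$. In addition, your bound on the first-order term via ``near symmetry'' of the measure is unjustified: (M) imposes no symmetry on $\mu_x$, $j(x,z)$ need not be odd in $z$, and the restriction $x+j(x,z)\in\bar\Omega$ destroys any residual cancellation; the honest bound is $|\bar U_r'(d(x))|\int_{|z|\le\xi}|j(x,z)|\,d\mu_x(z)=O(\xi^{1-\sigma}d(x)^{-1})$, which is worse than $d(x)^{-\sigma}$ as $d(x)\to0$ and therefore cannot yield \eqref{terza0} as stated (even though, as you note, such a term could still be absorbed into the drift to salvage \eqref{terza} alone).

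The paper avoids Taylor expansion entirely. It keeps the exact logarithmic difference, discards the part of the integral where $d(x+j(x,z))>d(x)$ (whose contribution to $\mathcal{I}_\xi[U_r]$ is nonpositive --- this is the legitimate one-sided step), rectifies the boundary via $\psi$ and changes variables to $w=\psi(x+j(x,z))-\psi(x)$, and then rescales $y=w/\psi_N(x)$. The $(-N-\sigma)$-homogeneity of the kernel pulls out the factor $d(x)^{-\sigma}$ exactly, and what remains is the universal finite constant $\int_{0\ge y_N\ge -1}|\ln(1+y_N)|\,|y|^{-(N+\sigma)}dy$: the singularity at $y=0$ is integrable because $|\ln(1+y_N)|\sim|y_N|\le|y|$ and $\sigma<1$, and the singularity at $y_N=-1$ is only logarithmic. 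This rescaling is the idea your plan is missing; your derivation of \eqref{terza} from \eqref{terza0} via the strict inwardness of $b$ on $\Gamma_{\mbox{\footnotesize{in}}}$ is correct and coincides with the paper's.
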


\begin{oss}\rm{
Note that the strict positivity of the drift term on the points of $\Gamma_{\footnotesize{\mbox{in}}}$  is essential here to prove \eqref{terza}, since the drift term controls the  integral term which explodes on the boundary, as  \eqref{terza0} shows.
}
\end{oss}

As a consequence of Lemma \ref{blow}, we prove Lemma \ref{lemblowup}.
\begin{proof}[Proof of Lemma \ref{lemblowup}]
Take $\tilde{r}$ as defined in Lemma \ref{blow} and let $U_{\tilde{r}}$ be defined as in \eqref{UR} for $r=\tilde{r}$. 
Then $U_{\tilde{r}}$ is a non-negative decreasing function which trivially satisfies  (ii) of Lemma \ref{lemblowup} with $\omega_{\tilde{r}}(s)=\frac{1}{\bar{U}_{\tilde{r}}(s)}$. Moreover, (i) Lemma \ref{lemblowup} follows as a direct application of \eqref{terza} of Lemma \ref{blow}. 
\end{proof}


Now we prove Lemma \ref{blow}.

\begin{proof}[Proof of Lemma \ref{blow}]

First we prove \eqref{terza0}. Let $\xi\leq C_j^{-1}\frac{r}{2}$.
Then, by (J1), for $ |z|\leq \xi$ and $x \in B_{\frac{r}{2}}(\bar{x})$, we have that
$
x+j(x,z)\in B_{r}(\bar{x}). 
$
We  describe the domain of integration of $\mathcal{I}_{\xi}[U_r]$ through the diffeomorphism $\psi$ as 
$
x+j(x,z)\in \bar{\Omega}=\psi_N(x+j(x,z)) \geq 0.
$
By the definition of $U_r$ and \eqref{limitxydelta1app}
$$
\mathcal{I}_\xi[U_r](x)=-\int_{\footnotesize{\begin{matrix}\psi_N(x+j(x,z))\geq 0, \\|z|\leq \xi\end{matrix}}}\left[\ln(\psi_N(x+j(x,z)))-\ln(\psi_N(x))\right] d\mu_x(z).
$$
We write
$
\mathcal{I}_{\xi}[U_r](x)= I^1+I^2,
$
where
$$
I^1=-\int_{\footnotesize{\begin{matrix}\psi_N(x+j(x,z))> \psi_N(x), \\|z|\leq  \xi\end{matrix}}}\left[\ln(\psi_N(x+j(x,z)))-\ln(\psi_N(x))\right] d\mu_x(z).
$$
and
$$
I^2=-\int_{\footnotesize{\begin{matrix}\psi_N(x)\geq \psi_N(x+j(x,z))\geq 0, \\|z|\leq  \xi\end{matrix}}}\left[\ln(\psi_N(x+j(x,z)))-\ln(\psi_N(x))\right]d\mu_x(z).
$$
 and since 
 $
 I^1\leq 0,
 $
 we get
 \begin{equation}\label{blowup12}
 \mathcal{I}_{\xi}[U_r](x)\leq -\int_{\footnotesize{\begin{matrix}\psi_N(x)\geq \psi_N(x+j(x,z))\geq 0, \\|z|\leq  \xi\end{matrix}}}\left[\ln(\psi_N(x+j(x,z)))-\ln(\psi_N(x))\right]d\mu_x(z).
 \end{equation}
We proceed performing a change of variable in order to write the set of integration in terms of $\psi_N(x)$. 
In other words, we write
$$
\psi(x+j(x,z))-\psi(x)=w.
$$
Then by (J0), (J1), the first of (M) and since $\psi$ is $W^{2,\infty}$, \eqref{blowup12} becomes
$$
\mathcal{I}_{\xi}[U_r](x)\leq  \bar{C}\int_{\footnotesize{\begin{matrix}0\geq w_N\geq -\psi_N(x), \\|w|\leq C \xi\end{matrix}}}\left|\ln\left(1+\frac{w_N}{\psi_N(x)}\right)\right|\frac{dw}{|w|^{N+\sigma}},
$$
for some $\bar{C},C>0$.
By the change of variable $y=\frac{w}{\psi_N(x)}$, we get
\begin{equation}\label{cofv2}
\mathcal{I}_{\xi}[U_r](x)\leq \bar{C}\psi_N(x)^{-\sigma}\int_{0\geq y_N\geq -1}|\ln\left(1+y_N\right)|\frac{dy}{|y|^{N+\sigma}}.
\end{equation}
Note that the integral in the right hand side is finite and does not depend on $\xi$. For convenience of notation we denote
$
A:=\bar{C}\int_{\footnotesize{0\geq y_N\geq -1}}|\ln\left(1+y_N\right)|\frac{dy}{|y|^{N+\sigma}}.
$
Then \eqref{cofv2} becomes
$$
\mathcal{I_{\xi}}[U_r](x)\leq Ad(x)^{-\sigma},
$$
which is exactly \eqref{terza0}.

Now we prove \eqref{terza}.
First note that, by the definition of $U_r$, we have
\begin{equation}\label{terza00}
DU_{r}(x)=d(x)^{-1}n \quad \mbox{in } B_{C_jr}(\bar{x}) \cap \Omega.
\end{equation}
Then,  by \eqref{terza0} and \eqref{terza00}, we have for all $\alpha \in \mathcal{A}$
$$
b(x,\alpha)\cdot DU_r(x) +\mathcal{I}_{\xi}[U_r](x)\leq d(x)^{-1}(b(x,\alpha)\cdot n+ d(x)^{1-\sigma}A).
$$
Since  we are in a neighbourhood of $\Gamma_{\footnotesize{\mbox{in}}}$, $\sigma <1$ and $\mathcal{A}$ is compact, there exists $0<\tilde{r}<r$ (depending only on $A, \sigma$ and $r$), such that if $x\in B_{\frac{r}{2}}(\bar{x})\cap \Omega$
$$
b(x,\alpha)\cdot DU_r(x) +\mathcal{I}_{\xi}[U_r](x)\leq 0 \quad \forall \alpha \in \mathcal{A}.
$$
Then \eqref{terza} follows and we conclude the proof of the Lemma. 
\end{proof}

\section*{Acknowledgement}
\addcontentsline{toc}{section}{Acknowledgement}
This work has been carried out during the permanence of the author at the Laboratoire de Math\'ematiques de Tours, which the author wants to thank  for the warm welcome. In particular the author wants to express her deepest gratitude to
Prof. Guy Barles, for proposing the problem and for the important support given. Without him, this work would not have been carried out. The author kindly thanks also the anonymous referees, whose comments and suggestions were precious in the improving of the final presentation of the paper.

\end{document}